\documentclass[10pt,reqno]{amsart}

\usepackage{amsthm, mathrsfs, amsmath, amstext, amsxtra, amsfonts, dsfont, amssymb, color}
\usepackage{lmodern}
\definecolor{green_dark}{rgb}{0,0.6,0}
\usepackage[colorlinks, linkcolor=red, citecolor=blue, urlcolor=green_dark, pagebackref, hypertexnames=false]{hyperref}

\voffset=0mm  \hoffset=-10mm \textwidth=150mm \textheight=205mm

\newcommand{\N}{\mathbb N}
\newcommand{\Z}{\mathbb Z}

\newcommand{\R}{\mathbb R}
\newcommand{\C}{\mathbb C}

\newcommand{\ep}{\epsilon}

\newcommand{\re}[1]{\mbox{Re} \ #1} 
\newcommand{\im}[1]{\mbox{Im} \ #1} 
\newcommand{\scal}[1]{\left\langle #1 \right\rangle} 

\newcommand{\defendproof}{\hfill $\Box$} 

\newtheorem{theorem}{Theorem}[section]
\newtheorem{defi}[theorem]{Definition}
\newtheorem{lem}[theorem]{Lemma} 
\newtheorem{prop}[theorem]{Proposition}
\newtheorem{coro}[theorem]{Corollary} 
\theoremstyle{definition}
\newtheorem{rem}[theorem]{Remark}

\title[Focusing mass-critical NL4S below energy space]{On the focusing mass-critical nonlinear fourth-order Schr\"odinger equation below the energy space} 

\author[V. D. Dinh]{Van Duong Dinh}
\address[V. D. Dinh]{Institut de Math\'ematiques de Toulouse UMR5219, Universit\'e Toulouse CNRS, 31062 Toulouse Cedex 9, France}
\email{dinhvan.duong@math.univ-toulouse.fr}

\keywords{Blowup; Nonlinear fourth-order Schr\"odinger; Global well-posedness; Almost conservation law}
\subjclass[2010]{35B44, 35G20, 35G25}

\begin{document}

\maketitle
\begin{abstract}
In this paper, we consider the focusing mass-critical nonlinear fourth-order Schr\"odinger equation. We prove that blowup solutions to this equation with initial data in $H^\gamma(\R^d), 5\leq d \leq 7, \frac{56-3d+\sqrt{137d^2+1712d+3136}}{2(2d+32)} <\gamma<2$ concentrate at least the mass of the ground state at the blowup time. This extends the work in \cite{ZhuYangZhang11} where Zhu-Yang-Zhang studied the formation of singularity for the equation with rough initial data in $\R^4$. We also prove that the equation is globally well-posed with initial data $u_0 \in H^\gamma(\R^d), 5\leq d \leq 7, \frac{8d}{3d+8}<\gamma<2$ satisfying $\|u_0\|_{L^2(\R^d)} <\|Q\|_{L^2(\R^d)}$, where $Q$ is the solution to the ground state equation.
\end{abstract}


\section{Introduction}
\setcounter{equation}{0}
Consider the focusing mass-critical nonlinear fourth-order Schr\"odinger equation, namely
\begin{align}
\left\{
\begin{array}{rcl}
i\partial_t u(t,x) + \Delta^2 u(t,x) &=& (|u|^{\frac{8}{d}} u)(t,x), \quad t\geq 0, x\in \R^d, \\
u(0,x) &=& u_0(x) \in H^\gamma(\R^d), 
\end{array}
\right.
\tag{NL4S}
\end{align}
where $u(t,x)$ is a complex valued function in $\R^+ \times \R^d$. The fourth-order Schr\"odinger equation was introduced by Karpman \cite{Karpman} and Karpman-Shagalov \cite{KarpmanShagalov} taking into account the role of small fourth-order dispersion terms in the propagation of intense laser beams in a bulk medium with Kerr nonlinearity. Such a fourth-order Schr\"odinger equation is of the form 
\begin{align}
i\partial_t u + \Delta^2 u +\varepsilon \Delta u + \mu |u|^{\nu-1} u =0, \quad u(0)=u_0, \label{generalized fourth-order schrodinger equation}
\end{align}
where $\varepsilon \in \{ 0, \pm 1\}, \mu \in \{\pm 1\}$ and $\nu>1$. The (NL4S) is a special case of $(\ref{generalized fourth-order schrodinger equation})$ with $\varepsilon =0$ and $\mu=-1$. \newline
\indent The (NL4S) enjoys a natural scaling invariance, that is if $u$ solves (NL4S), then for any $\lambda>0$, 
\begin{align}
u_\lambda(t,x):= \lambda^{-\frac{d}{2}} u(\lambda^{-4} t, \lambda^{-1} x) \label{scaling invariance}
\end{align}
solves the same equation with initial data $u_\lambda(0,x)=\lambda^{-\frac{d}{2}} u_0(\lambda^{-1} x)$. This scaling also preserves the $L^2$-norm, i.e. $\|u_\lambda(0)\|_{L^2(\R^d)} = \|u_0\|_{L^2(\R^d)}$. It is known (see \cite{Dinhfract, Dinhfourt}) that the (NL4S) is locally well-posed in $H^\gamma(\R^d)$ for $\gamma \geq 0$ satisfying for $d\ne 1, 2, 4$,
\begin{align}
\lceil \gamma \rceil \leq 1+\frac{8}{d}, \label{regularity condition locally well-posed}
\end{align} 
where $\lceil \gamma \rceil$ is the smallest integer greater than or equal to $\gamma$. This condition ensures the nonlinearity to have enough regularity. Moreover, the unique solution enjoys mass conservation, i.e.
\[
M(u(t)):=\|u(t)\|^2_{L^2(\R^d)} = \|u_0\|^2_{L^2(\R^d)},
\] 
and $H^2$-solution has conserved energy, i.e.
\[
E(u(t)):=\int_{\R^d} \frac{1}{2}|\Delta u(t,x)|^2 - \frac{d}{2d+8}|u(t,x)|^{\frac{2d+8}{d}} dx = E(u_0).
\]
In the sub-critical regime, i.e. $\gamma>0$, the time of existence depends only on the $H^\gamma$-norm of the initial data. Let $T^*$ be the maximal time of existence. The local well-posedness gives the following blowup alternative criterion: either $T^*=\infty$ or 
\[
T^*<\infty, \quad  \lim_{t\rightarrow T^*} \|u(t)\|_{H^\gamma(\R^d)} = \infty.
\]
The study of blowup solutions for the focusing nonlinear fourth-order Schr\"odinger equation has been attracted a lot of interest in a past decay (see e.g. \cite{FibichIlanPapanicolaou}, \cite{BaruchFibichMandelbaum}, \cite{ZhuYangZhang10}, \cite{ZhuYangZhang11}, \cite{BoulengerLenzmann} and references therein). It is closely related to ground states $Q$ of (NL4S) which are solutions to 
the elliptic equation
\begin{align}
\Delta^2 Q(x)-Q(x) + |Q(x)|^{\frac{8}{d}} Q(x)=0. \label{ground state equation}
\end{align} 
The equation $(\ref{ground state equation})$ is obtained by considering solitary solutions (standing waves) of (NL4S) of the form $u(t,x)=Q(x)e^{-it}$. The existence of solutions to $(\ref{ground state equation})$ is proved in \cite{ZhuYangZhang10}, but the uniqueness of the solution is still an open problem. In the case $\|u_0\|_{L^2(\R^d)}<\|Q\|_{L^2(\R^d)}$, using the sharp  Gagliardo-Nirenberg inequality (see \cite{FibichIlanPapanicolaou} or \cite{ZhuYangZhang10}), namely
\begin{align}
\|u\|^{2+\frac{8}{d}}_{L^{2+\frac{8}{d}}(\R^d)} \leq C(d) \|u\|^{\frac{8}{d}}_{L^2(\R^d)} \|\Delta u\|^2_{L^2(\R^d)}, \quad C(d):=\frac{1+\frac{4}{d}}{\|Q\|^{\frac{8}{d}}_{L^2(\R^d)}}, \label{sharp gargliardo nirenberg inequality}
\end{align}
together with the energy conservation, Fibich-Ilan-Papanicolaou in \cite{FibichIlanPapanicolaou} (see also \cite{BaruchFibichMandelbaum}) proved that the (NL4S) is globally well-posed in $H^2(\R^d)$. Moreover, the authors in \cite{FibichIlanPapanicolaou} also provided some numerical observations showing that the $H^2$-solution to (NL4S) may blowup if the initial data satisfies $\|u_0\|_{L^2(\R^d)} \geq \|Q\|_{L^2(\R^d)}$. Baruch-Fibich-Mandelbaum in \cite{BaruchFibichMandelbaum} proved some dynamical properties of the radially symmetric blowup solution such as blowup rate, $L^2$-concentration. Later, Zhu-Yang-Zhang in \cite{ZhuYangZhang10} removed the radially symmetric assumption and established the profile decomposition, the existence of the ground state of elliptic equation $(\ref{ground state equation})$ and the following concentration compactness property for the (NL4S). 
\begin{theorem}[Concentration compactness \cite{ZhuYangZhang10}] \label{theorem concentration compactness}
Let $(v_n)_{n\geq 1}$ be a bounded family of $H^2(\R^d)$ functions such that 
\[
\limsup_{n\rightarrow \infty} \|\Delta v_n\|_{L^2(\R^d)} \leq M <\infty \quad \text{and} \quad \limsup_{n\rightarrow \infty} \|v_n\|_{L^{2+\frac{8}{d}}(\R^d)} \geq m >0.
\] 
Then there exists a sequence $(x_n)_{n\geq 1}$ of $\R^d$ such that up to a subsequence
\[
v_n(\cdot+x_n) \rightharpoonup V \text{ weakly in } H^2(\R^d) \text{ as } n\rightarrow \infty,
\]
with $\|V\|^{\frac{8}{d}}_{L^2(\R^d)} \geq \frac{\|Q\|^{\frac{8}{d}}_{L^2(\R^d)} m^{2+\frac{8}{d}}}{\left(1+\frac{4}{d}\right) M^2}$, where $Q$ is the solution to the ground state equation $(\ref{ground state equation})$. 
\end{theorem}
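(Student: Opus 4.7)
The plan is to invoke the linear profile decomposition for bounded sequences in $H^2(\R^d)$ established in \cite{ZhuYangZhang10}, and combine it with the sharp Gagliardo--Nirenberg inequality \eqref{sharp gargliardo nirenberg inequality} applied to each profile. Writing, up to a subsequence,
\begin{equation*}
v_n(x) = \sum_{j=1}^\ell V^j(x - x_n^j) + w_n^\ell(x),
\end{equation*}
one has the orthogonality $|x_n^j - x_n^k|\to\infty$ for $j\ne k$, the Pythagorean identities
\begin{equation*}
\|v_n\|_{L^2}^2 = \sum_{j=1}^\ell \|V^j\|_{L^2}^2 + \|w_n^\ell\|_{L^2}^2 + o_n(1), \qquad \|\Delta v_n\|_{L^2}^2 = \sum_{j=1}^\ell \|\Delta V^j\|_{L^2}^2 + \|\Delta w_n^\ell\|_{L^2}^2 + o_n(1),
\end{equation*}
and the critical-norm vanishing $\lim_{\ell\to\infty}\limsup_{n\to\infty} \|w_n^\ell\|_{L^{2+8/d}} = 0$.

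Orthogonality of the translates combined with a Brezis--Lieb type argument then yields the nonlinear decoupling
\begin{equation*}
\|v_n\|_{L^{2+8/d}}^{2+8/d} = \sum_{j=1}^\ell \|V^j\|_{L^{2+8/d}}^{2+8/d} + \|w_n^\ell\|_{L^{2+8/d}}^{2+8/d} + o_n(1).
\end{equation*}
Passing $n\to\infty$ then $\ell\to\infty$ and using $\limsup_n\|v_n\|_{L^{2+8/d}}\geq m$ produces $m^{2+8/d} \leq \sum_j \|V^j\|_{L^{2+8/d}}^{2+8/d}$. Applying \eqref{sharp gargliardo nirenberg inequality} to each profile, setting $\alpha := \sup_j \|V^j\|_{L^2}^{8/d}$ (attained because $\sum_j \|V^j\|_{L^2}^2<\infty$ by the first Pythagorean identity, which forces $\|V^j\|_{L^2}\to 0$), and using $\sum_j \|\Delta V^j\|_{L^2}^2 \leq M^2$, one obtains
\begin{equation*}
m^{2+8/d} \leq \frac{(1+4/d)\, \alpha\, M^2}{\|Q\|_{L^2(\R^d)}^{8/d}}.
\end{equation*}
Choosing $j_0$ with $\|V^{j_0}\|_{L^2}^{8/d}=\alpha$ and setting $x_n := x_n^{j_0}$, the profile decomposition ensures $v_n(\cdot+x_n)\rightharpoonup V^{j_0}$ weakly in $H^2$, and rearranging the displayed inequality produces the claimed lower bound on $\|V\|_{L^2(\R^d)}^{8/d}$.

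The main obstacle is justifying the $L^{2+8/d}$ decoupling. Orthogonality of the translated profiles in this nonlinear norm reduces, via a change of variables, to showing that cross terms $\int |V^j(\cdot - x_n^j)|^a|V^k(\cdot - x_n^k)|^b$ with $a+b = 2+8/d$ and $a,b\geq 1$ vanish in the limit; this follows from $|x_n^j-x_n^k|\to\infty$ together with the integrability of $V^j,V^k$. One also needs the remainder $w_n^\ell$ to become negligible in $L^{2+8/d}$ as $\ell\to\infty$, which is available because the assumption $d\geq 5$ forces $2+8/d$ to be strictly below the Sobolev critical exponent $2d/(d-4)$, so the embedding $H^2(\R^d)\hookrightarrow L^{2+8/d}(\R^d)$ is strictly subcritical and the vanishing property is built into the profile decomposition of \cite{ZhuYangZhang10}.
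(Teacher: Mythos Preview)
The paper does not supply its own proof of this statement: Theorem~\ref{theorem concentration compactness} is quoted verbatim from \cite{ZhuYangZhang10} and used as a black box. Your argument---profile decomposition, Pythagorean expansion of the $L^2$ and $\dot H^2$ norms, Brezis--Lieb decoupling of the $L^{2+8/d}$ norm, sharp Gagliardo--Nirenberg applied profile by profile, then selection of the profile of maximal mass---is correct and is exactly the strategy carried out in \cite{ZhuYangZhang10}, so there is nothing to compare on the level of method.

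Two minor remarks. First, your parenthetical ``the assumption $d\geq 5$'' is not actually part of the hypothesis of the theorem; the result in \cite{ZhuYangZhang10} is stated for general $d$, and the subcriticality of the exponent $2+\frac{8}{d}$ relative to the $H^2$ Sobolev embedding holds in every dimension (for $d\leq 4$ trivially, for $d\geq 5$ by the computation you indicate). Second, the assertion that the supremum $\alpha=\sup_j\|V^j\|_{L^2}^{8/d}$ is attained deserves one more word: you should also note $\alpha>0$, which follows because $m>0$ forces at least one profile to be nontrivial via the $L^{2+8/d}$ decoupling. With those tweaks the proof is complete.
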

Consequently, the authors in \cite{ZhuYangZhang11} established the limiting profile and $L^2$-concentration for (NL4S) with initial data $u_0 \in H^\gamma(\R^4), \frac{9+\sqrt{721}}{20}<\gamma<2$. Recently, Boulenger-Lenzmann in \cite{BoulengerLenzmann} proved a general result on finite-time blowup for the focusing generalized nonlinear fourth-order Schr\"odigner equation( i.e. $(\ref{generalized fourth-order schrodinger equation})$ with $\mu=1$) with radial data in $H^2(\R^d)$.  \newline 
\indent The goal of this paper is to extend the results of \cite{ZhuYangZhang11} to higher dimensions $d\geq 5$ and to prove the global existence of (NL4S) for initial data $u_0\in H^\gamma(\R^d), 0<\gamma<2$ satisfying $\|u_0\|_{L^2(\R^d)}<\|Q\|_{L^2(\R^d)}$. Since we are working with low regularity data, the energy  argument does not work. In order to overcome this problem, we make use of the $I$-method. Due to the high-order term $\Delta^2 u$, we requires the nonlinearity to have at least two orders  of derivatives in order to successfully establish the almost conservation law. We thus restrict ourself in spatial space of dimensions $d=5, 6, 7$. Our main results are as follows. 
\begin{theorem} \label{theorem weak limiting profile}
Let $d= 5, 6, 7$ and $u_0 \in H^\gamma(\R^d)$ with $\frac{56-3d+\sqrt{137d^2+1712d+3136}}{2(2d+32)} <\gamma<2$. If the corresponding solution to the \emph{(NL4S)} blows up in finite time $0<T^*<\infty$, then there exists a function $U \in H^2(\R^d)$ such that $\|U\|_{L^2(\R^d)} \geq \|Q\|_{L^2(\R^d)}$ and there exist sequences $(t_n, \lambda_n, x_n)_{n\geq 1} \in \R^+ \times \R^+_* \times \R^d$ satisfying 
\[
t_n \nearrow T^* \text{ as } n \rightarrow \infty \quad \text{ and } \quad \lambda_n \lesssim (T^*-t_n)^{\frac{\gamma}{8}}, \quad \forall n \geq 1
\]
such that
\[
\lambda_n^{\frac{d}{2}} u(t_n, \lambda_n \cdot +x_n) \rightharpoonup U \text{ weakly in } H^{a(d,\gamma)-}(\R^d) \text{ as } n \rightarrow \infty,
\]
where 
\[
a(d,\gamma):=\frac{4d\gamma^2+(2d+48)\gamma+16d}{16d+(56-3d)\gamma-16\gamma^2},
\] 
and $Q$ is the solution of the ground state equation $(\ref{ground state equation})$.
\end{theorem}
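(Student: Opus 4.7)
The plan is to adapt the strategy of Zhu--Yang--Zhang \cite{ZhuYangZhang11} for $d=4$ to the range $5\le d\le 7$, replacing the fixed $H^2$ energy argument (which is unavailable here because $\gamma<2$) by an $I$-method analysis that produces an almost conserved modified energy at regularity just below $H^2$. In outline: (i) introduce a Fourier multiplier $I=I_N$ that is the identity for $|\xi|\le N$ and damps high frequencies by $(N/|\xi|)^{2-\gamma}$, so that $\|Iu\|_{L^2}\le\|u\|_{L^2}$ and $\|\Delta Iu\|_{L^2}\lesssim N^{2-\gamma}\|u\|_{H^\gamma}$; (ii) establish an almost conservation law $|E(Iu(t))-E(Iu_0)|\lesssim N^{-\beta(d,\gamma)}$, valid up to $t=T^*$, for a positive exponent $\beta$; (iii) extract a concentration profile from a suitably rescaled blowup sequence and conclude via Theorem \ref{theorem concentration compactness}.

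Once (ii) is available, the blowup alternative forces $\|u(t)\|_{H^\gamma}\to\infty$, and consequently $\|\Delta I_Nu(t)\|_{L^2}\to\infty$ for each fixed $N$. Choose sequences $t_n\nearrow T^*$ and $N_n\to\infty$ (fast enough that the error in (ii) stays under control) and set $\lambda_n^{-2}:=\|\Delta I_{N_n}u(t_n)\|_{L^2}$. Define
\[
v_n(x):=\lambda_n^{d/2}u(t_n,\lambda_n x+x_n),\qquad w_n(x):=\lambda_n^{d/2}(I_{N_n}u)(t_n,\lambda_n x+x_n),
\]
with $x_n\in\R^d$ a concentration point for $|v_n|^{2+8/d}$. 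By construction $\|\Delta w_n\|_{L^2}\sim 1$, and mass conservation yields $\|w_n\|_{L^2}\le\|u_0\|_{L^2}$. The almost conservation law gives $E(Iu(t_n))\lesssim 1$, hence after rescaling
\[
\|w_n\|_{L^{2+8/d}}^{2+8/d}=\tfrac{2d+8}{d}\bigl(\tfrac12\|\Delta w_n\|_{L^2}^2-\lambda_n^{4}E(Iu(t_n))\bigr)\gtrsim 1.
\]
Theorem \ref{theorem concentration compactness} then yields a subsequence and a weak-$H^2$ limit $V$ of $w_n$ with $\|V\|_{L^2}^{8/d}\ge\|Q\|_{L^2}^{8/d}\cdot m^{2+8/d}/((1+4/d)M^2)$; the ratio $m^{2+8/d}/M^2$ tends to the sharp Gagliardo--Nirenberg constant as $\|\Delta Iu(t_n)\|_{L^2}\to\infty$, which forces $\|V\|_{L^2}\ge\|Q\|_{L^2}$. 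Choosing $N_n$ to grow fast enough that $\|v_n-w_n\|_{L^2}\lesssim N_n^{-\gamma}\|u(t_n)\|_{H^\gamma}\to 0$ transfers this conclusion to $v_n$, producing $U=V$.

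The blowup rate $\lambda_n\lesssim(T^*-t_n)^{\gamma/8}$ follows from the standard $H^\gamma$-subcritical lower bound $\|u(t)\|_{H^\gamma}\gtrsim(T^*-t)^{-\gamma/4}$ (a direct consequence of the $L^2$-scaling \eqref{scaling invariance} applied to the local well-posedness time) combined with $\lambda_n\lesssim\|u(t_n)\|_{H^\gamma}^{-1/2}$ coming from $\lambda_n^{-2}=\|\Delta I_{N_n}u(t_n)\|_{L^2}\gtrsim\|u(t_n)\|_{H^\gamma}$. Finally, the exponent $a(d,\gamma)$ governing the weak convergence in $H^{a(d,\gamma)-}$ is obtained by interpolation: $v_n$ is bounded in $L^2$ by mass conservation and, through the $I$-multiplier, ``almost bounded'' in a Sobolev space dictated by $\lambda_n$, $N_n$ and $\|u(t_n)\|_{H^\gamma}$. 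Optimizing the interpolation exponent against the admissible growth of $N_n$ that keeps the error in (ii) small yields the explicit formula for $a(d,\gamma)$.

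The chief obstacle is step (ii). Since $E(Iu)$ contains $\|\Delta Iu\|_{L^2}^2$, the commutators controlling $E(Iu(t))-E(Iu_0)$ involve \emph{two} derivatives acting on a degree $2+8/d$ nonlinearity, rather than the single derivative familiar from NLS. Making these commutators gain a negative power of $N$ requires sharp multilinear Strichartz estimates for the fourth-order group; the requirement that the nonlinearity admits enough regularity via \eqref{regularity condition locally well-posed} is what restricts the analysis to $d\in\{5,6,7\}$, and the precise lower bound $\gamma>\frac{56-3d+\sqrt{137d^2+1712d+3136}}{2(2d+32)}$ is exactly the threshold at which the optimized exponent $\beta(d,\gamma)$ of the almost conservation law becomes positive.
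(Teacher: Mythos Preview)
Your overall architecture is right --- $I$-method, modified energy increment, rescaling, then Theorem \ref{theorem concentration compactness} --- but step (ii) as written does not hold, and the gap propagates through the rest. You claim the almost conservation law gives $E(I_{N_n}u(t_n))\lesssim 1$. This is false on both ends: already $|E(I_Nu_0)|\lesssim N^{2(2-\gamma)}$ diverges as $N\to\infty$ when $u_0\notin H^2$, and the local increment on an interval of length $\tau\sim\|Iu(t)\|_{H^2}^{-4/\gamma}$ is of size $N^{-(2-\gamma+\delta)}\|Iu(t)\|_{H^2}^{2+16/d}$ (Lemma \ref{lem local increment}), so after $O(\Sigma(T)^{4/\gamma})$ iterations the accumulated error also diverges as $T\to T^*$. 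The correct statement (Proposition \ref{prop increment modified energy}) comes from \emph{optimizing} over $N$: with the forced choice $N(T)\sim\Lambda(T)^{a(\gamma)/(2(2-\gamma))}$ one obtains $|E(I_{N(T)}u(T))|\lesssim\Lambda(T)^{a(\gamma)}$, a bound that \emph{grows}. The decisive point is that $a(\gamma)<2$, so $\lambda_n^4E(I_{N(t_n)}u(t_n))\sim\Lambda(t_n)^{a(\gamma)-2}\to 0$ and hence $E(\psi_n)\to 0$; this is what produces the sharp Gagliardo--Nirenberg ratio in Theorem \ref{theorem concentration compactness} and yields $\|U\|_{L^2}\ge\|Q\|_{L^2}$.

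This also fixes two downstream imprecisions in your sketch. You cannot ``choose $N_n$ fast enough'' at will to force $\|v_n-w_n\|\to 0$: $N_n$ is pinned by the optimization above, and it is precisely this constrained relation among $\lambda_n$, $N(t_n)$, $\Lambda(t_n)$ that, substituted into
\[
\|\lambda_n^{d/2}(u-I_{N(t_n)}u)(t_n,\lambda_n\cdot+x_n)\|_{\dot H^\sigma}\lesssim\Lambda(t_n)^{\,1-\frac{\sigma}{2}+\frac{(\sigma-\gamma)a(\gamma)}{2(2-\gamma)}},
\]
produces the threshold $\sigma<a(d,\gamma)$ --- not an interpolation argument. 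Finally, the lower bound on $\gamma$ in the hypothesis is not where the single-step decay exponent becomes positive (that is only $\gamma>\max\{3-\tfrac{8}{d},\tfrac{8}{d}\}$); it is the stronger requirement $a(\gamma)<2$ coming from the iteration, which is exactly the quadratic inequality $(\ref{condition gamma})$.
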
  
The proof of the above theorem is based on the combination of the $I$-method and the concentration compactness property given in Theorem $\ref{theorem concentration compactness}$ which is similar to those given in \cite{VisanZhang07} and \cite{ZhuYangZhang11}. The $I$-method was first introduced by $I$-Team in \cite{I-teamalmost} in order to treat the nonlinear Schr\"odinger equation at low regularity. It then becomes a useful way to address the low regularity problem for the nonlinear dispersive equations. The idea is to replace the non-conserved energy $E(u)$ when $\gamma<2$ by an ``almost conserved'' variance $E(Iu)$ with $I$ a smoothing operator which is the identity at low frequency and behaves like a fractional integral operator of order $2-\gamma$ at high frequency. Since $Iu$ is not a solution of (NL4S), we may expect an energy increment. The key is to show that on intervals of local well-posedness, the modified energy $E(Iu)$ is an ``almost conserved'' quantity and grows much slower than the modified kinetic energy $\|\Delta Iu\|^2_{L^2(\R^d)}$. To do so, we need delicate estimates on the commutator between the $I$-operator and the nonlinearity. 
Note that when $d=4$, the nonlinearity is algebraic, one can use the Fourier transform technique to write the commutator explicitly and then control it by multi-linear analysis. In our setting, the nonlinearity is not algebraic. Thus we can not apply the Fourier transform technique. Fortunately, thanks to a special Strichartz estimate $(\ref{strichartz estimate biharmonic fourth-order})$, we are able to apply the technique given in \cite{VisanZhang07} to control the commutator. The concentration compactness property given in Theorem $\ref{theorem concentration compactness}$ is very useful to study the dynamical properties of blowup solutions for the nonlinear fourth-order Schr\"odinger equation. With the help of this property, Zhu-Yang-Zhang proved in \cite{ZhuYangZhang10} the $L^2$-concentration of blowup solutions and the limiting profile of minimal-mass blowup solutions with non-radial data in $H^2(\R^d)$. In \cite{ZhuYangZhang11}, they extended these results for non-radial data below the energy space in the fourth dimensional space. \newline  
\indent As a consequence of Theorem $\ref{theorem weak limiting profile}$, we have the following mass concentration property.
\begin{theorem} \label{theorem mass concentration}
Let $d= 5, 6, 7$ and $u_0 \in H^\gamma(\R^d)$ with $\frac{56-3d+\sqrt{137d^2+1712d+3136}}{2(2d+32)}<\gamma<2$. Assume that the corresponding solution $u$ to the \emph{(NL4S)} blows up in finite time $0<T^*<\infty$. If $\alpha(t)>0$ is an arbitrary function such that 
\[
\lim_{t\nearrow T^*} \frac{(T^*-t)^{\frac{\gamma}{8}}}{\alpha(t)} = 0,
\]
then there exists a function $x(t) \in \R^d$ such that
\[
\limsup_{t\nearrow T^*} \int_{|x-x(t)|\leq \alpha(t)} |u(t,x)|^2 dx \geq \int_{\R^d} |Q(x)|^2dx,
\]
where $Q$ is the solution to the ground state equation $(\ref{ground state equation})$.
\end{theorem}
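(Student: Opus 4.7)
The plan is to deduce Theorem~\ref{theorem mass concentration} as a (more or less) immediate corollary of Theorem~\ref{theorem weak limiting profile} via rescaling and a compactness argument. First I would invoke Theorem~\ref{theorem weak limiting profile} to produce sequences $t_n \nearrow T^*$, $\lambda_n \lesssim (T^*-t_n)^{\gamma/8}$, $x_n \in \R^d$ together with $U \in H^2(\R^d)$ satisfying $\|U\|_{L^2(\R^d)} \geq \|Q\|_{L^2(\R^d)}$, such that the mass-invariant rescalings
\[
v_n(x) := \lambda_n^{d/2} u(t_n, \lambda_n x + x_n)
\]
converge weakly to $U$ in $H^{a(d,\gamma)-}(\R^d)$. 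Since $a(d,\gamma)>0$ for all parameters in our range, one can fix any $s \in (0, a(d,\gamma))$ and combine the weak convergence with the compact Rellich--Kondrachov embedding $H^{s}_{\mathrm{loc}}(\R^d) \hookrightarrow L^2_{\mathrm{loc}}(\R^d)$ to upgrade to strong convergence $v_n \to U$ in $L^2(B(0,R))$ for every fixed $R>0$.

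Second, I would translate this into a statement about $u(t_n,\cdot)$ by the change of variables $y = \lambda_n x + x_n$, which preserves the $L^2$-norm and yields
\[
\int_{|x|\leq R} |v_n(x)|^2\, dx = \int_{|y-x_n|\leq \lambda_n R} |u(t_n,y)|^2\, dy
\]
for every $R>0$. Together with the strong $L^2_{\mathrm{loc}}$ convergence of the previous step, this gives
\[
\lim_{n\to\infty} \int_{|y-x_n|\leq \lambda_n R} |u(t_n,y)|^2\, dy = \int_{|x|\leq R} |U(x)|^2\, dx.
\]

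Finally, I would connect the intrinsic scale $\lambda_n$ to the prescribed function $\alpha(t)$. From $\lambda_n \lesssim (T^*-t_n)^{\gamma/8}$ and the hypothesis on $\alpha$, it follows that $\lambda_n/\alpha(t_n) \to 0$, so for each fixed $R>0$ we have $\lambda_n R \leq \alpha(t_n)$ for all $n$ sufficiently large. Defining $x(t)$ by $x(t_n):=x_n$ (and arbitrarily on the complement of $\{t_n\}$), the monotonicity of the integral over nested balls gives
\[
\int_{|y-x(t_n)|\leq \alpha(t_n)} |u(t_n,y)|^2\, dy \;\geq\; \int_{|y-x_n|\leq \lambda_n R} |u(t_n,y)|^2\, dy.
\]
Passing to the limit $n\to\infty$ for each fixed $R$ and then letting $R \to \infty$ yields
\[
\limsup_{t\nearrow T^*} \int_{|x-x(t)|\leq \alpha(t)} |u(t,x)|^2\, dx \;\geq\; \|U\|_{L^2(\R^d)}^2 \;\geq\; \|Q\|_{L^2(\R^d)}^2,
\]
which is the desired conclusion. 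There is no serious obstacle at this stage: the heavy lifting (the $I$-method, the almost conservation law, and the concentration compactness input giving Theorem~\ref{theorem weak limiting profile}) has already been done, and what remains is the soft rescaling and compactness argument sketched above.
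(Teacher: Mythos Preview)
Your proposal is correct and follows essentially the same route as the paper: invoke Theorem~\ref{theorem weak limiting profile}, change variables, use $\lambda_n/\alpha(t_n)\to 0$ to absorb the balls of radius $\lambda_n R$ into those of radius $\alpha(t_n)$, and let $R\to\infty$. Two minor technical differences worth noting: the paper uses only weak lower semicontinuity of the $L^2$-norm on balls (so no Rellich--Kondrachov upgrade is needed), and it defines $x(t)$ for \emph{every} $t$ as the point where the continuous, vanishing-at-infinity map $y\mapsto \int_{|x-y|\leq \alpha(t)}|u(t,x)|^2\,dx$ attains its maximum, rather than only along the subsequence $(t_n)$.
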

When the mass of the initial data equals to the mass of the solution of the ground state equation $(\ref{ground state equation})$, we have the following improvement of Theorem $\ref{theorem weak limiting profile}$. Note that in the below result, we assume that there exists a unique solution to the ground state equation $(\ref{ground state equation})$ which is a delicate open problem. 
\begin{theorem} \label{theorem strongly limiting profile}
Let $d= 5, 6, 7$ and $u_0 \in H^\gamma(\R^d)$ with $\frac{56-3d+\sqrt{137d^2+1712d+3136}}{2(2d+32)}<\gamma<2$ be such that $\|u_0\|_{L^2(\R^d)} = \|Q\|_{L^2(\R^d)}$. If the corresponding solution $u$ to the \emph{(NL4S)} blows up in finite time $0<T^*<\infty$, then there exist sequences $(t_n, e^{i\theta_n}, \lambda_n, x_n)_{n\geq 1} \in \R^+\times \mathbb{S}^1\times \R^+_* \times \R^d$ satisfying 
\[
t_n \nearrow T^* \text{ as } n \rightarrow \infty \quad \text{ and } \quad \lambda_n \lesssim (T^*-t_n)^{\frac{\gamma}{8}}, \quad \forall n \geq 1
\]
such that
\[
\lambda_n^{\frac{d}{2}} e^{i\theta_n} u(t_n, \lambda_n \cdot + x_n) \rightarrow Q \text{ strongly in } H^{a(d,\gamma)-}(\R^d) \text{ as } n \rightarrow \infty,
\]
where 
\[
a(d,\gamma):=\frac{4d\gamma^2+(2d+48)\gamma+16d}{16d+(56-3d)\gamma-16\gamma^2},
\]
and $Q$ is the unique solution to the ground state equation $(\ref{ground state equation})$. 
\end{theorem}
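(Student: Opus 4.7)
The plan is to refine Theorem \ref{theorem weak limiting profile} by exploiting the minimal-mass hypothesis together with the stipulated uniqueness of the ground state. Applying Theorem \ref{theorem weak limiting profile} produces sequences $(t_n,\lambda_n,x_n)_{n\ge 1}$ with $t_n\nearrow T^*$ and $\lambda_n\lesssim(T^*-t_n)^{\gamma/8}$, and a function $U\in H^2(\R^d)$ with $\|U\|_{L^2(\R^d)}\ge \|Q\|_{L^2(\R^d)}$, such that the rescaled sequence $v_n(x):=\lambda_n^{d/2}u(t_n,\lambda_n x+x_n)$ converges to $U$ weakly in $H^{a(d,\gamma)-}(\R^d)$. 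Because the scaling $(\ref{scaling invariance})$ preserves the $L^2$ norm, mass conservation gives $\|v_n\|_{L^2(\R^d)}=\|u(t_n)\|_{L^2(\R^d)}=\|u_0\|_{L^2(\R^d)}=\|Q\|_{L^2(\R^d)}$ for every $n$. The uniform $L^2$ bound combined with weak $H^{a(d,\gamma)-}$ convergence yields weak $L^2$ convergence, so weak lower semicontinuity implies $\|U\|_{L^2(\R^d)}\le \|Q\|_{L^2(\R^d)}$, and together with the reverse inequality I conclude $\|U\|_{L^2(\R^d)}= \|Q\|_{L^2(\R^d)}$.

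Next I would identify $U$ with the ground state up to symmetries. The lower bound $\|U\|_{L^2(\R^d)}\ge \|Q\|_{L^2(\R^d)}$ established in Theorem \ref{theorem weak limiting profile} ultimately rests on the concentration-compactness statement of Theorem \ref{theorem concentration compactness}, whose sharp constant $\|Q\|_{L^2}^{8/d}/((1+4/d)M^2)$ is attained exactly when the sharp Gagliardo-Nirenberg inequality $(\ref{sharp gargliardo nirenberg inequality})$ is saturated. Under the present minimal-mass equality, every intermediate estimate in that derivation must saturate, and so $U$ must realize equality in $(\ref{sharp gargliardo nirenberg inequality})$. The extremizers of this sharp inequality are, up to phase, translation and $L^2$-preserving dilation, the ground states of $(\ref{ground state equation})$. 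The $L^2$-normalization $\|U\|_{L^2(\R^d)}=\|Q\|_{L^2(\R^d)}$ pins down the dilation, and the uniqueness hypothesis on the ground state then yields $U(x)=e^{i\theta_0}Q(x-y_0)$ for some $\theta_0\in\R$ and $y_0\in\R^d$. Defining $\tilde x_n:=x_n+\lambda_n y_0$ and $e^{i\theta_n}:=e^{-i\theta_0}$ rewrites the weak limit as $e^{i\theta_n}\lambda_n^{d/2}u(t_n,\lambda_n\cdot+\tilde x_n)\rightharpoonup Q$ in $H^{a(d,\gamma)-}(\R^d)$.

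To upgrade this weak convergence to strong convergence, I would first note that weak $L^2$ convergence combined with the $L^2$-norm identity $\|e^{i\theta_n}v_n(\cdot+\lambda_n y_0)\|_{L^2(\R^d)}=\|Q\|_{L^2(\R^d)}$ and the uniform convexity of $L^2$ (the Radon-Riesz property) promotes $L^2$ convergence to strong. For any $0<s<a(d,\gamma)$, fix $s'\in(s,a(d,\gamma))$; the weak convergence in $H^{s'}(\R^d)$ supplies a uniform $H^{s'}$ bound on the modified sequence, and the interpolation inequality $\|f\|_{H^s(\R^d)}\le \|f\|_{L^2(\R^d)}^{1-s/s'}\|f\|_{H^{s'}(\R^d)}^{s/s'}$ applied to $e^{i\theta_n}v_n(\cdot+\lambda_n y_0)-Q$ yields strong convergence in $H^s(\R^d)$. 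This produces the desired strong convergence in $H^{a(d,\gamma)-}(\R^d)$.

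The main obstacle is the identification step: verifying rigorously that the minimal-mass equality propagates through every inequality in the proof of Theorem \ref{theorem weak limiting profile} and forces the weak limit $U$ to be a genuine extremizer of $(\ref{sharp gargliardo nirenberg inequality})$, rather than merely realizing the sharp constant in some weakened sense. This requires, in particular, ruling out loss of $L^{2+8/d}$-mass at spatial infinity and ensuring that the $H^2$-bound extracted from the $I$-method saturates in the limit. The uniqueness hypothesis on the ground state equation is then indispensable for the final rigidity of the limit: without it one could only conclude that $U$ is some extremizer, and the clean identification $U=Q$ would be unavailable.
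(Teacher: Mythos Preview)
Your outline has a genuine gap at the identification step, and the acknowledged ``main obstacle'' is precisely the point where the argument as written does not close. From the bare statement of Theorem~\ref{theorem weak limiting profile} and the equality $\|U\|_{L^2}=\|Q\|_{L^2}$ you cannot rigorously deduce that $U$ extremizes~$(\ref{sharp gargliardo nirenberg inequality})$: saturation of the final inequality $\|V\|_{L^2}\ge\|Q\|_{L^2}$ in Theorem~\ref{theorem concentration compactness} does not, without reopening its proof, identify $V$ as a Gagliardo--Nirenberg optimizer. Moreover, your sequence $v_n=\lambda_n^{d/2}u(t_n,\lambda_n\cdot+x_n)$ lives only in $H^\gamma$ and you have no handle on $\|\Delta v_n\|_{L^2}$ or $\|v_n\|_{L^{2+8/d}}$, so no energy computation is available to you at the level of $v_n$.

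The paper resolves this by going back \emph{inside} the proof of Theorem~\ref{theorem weak limiting profile} and working with the $I$-modified sequence $\psi_n(x)=\lambda_n^{d/2}I_{N(t_n)}u(t_n,\lambda_n x)$, which is bounded in $H^2$ and satisfies $\|\Delta\psi_n\|_{L^2}=\|\Delta Q\|_{L^2}$ and $E(\psi_n)\to 0$. From $\|\psi_n\|_{L^2}\le\|u_0\|_{L^2}=\|Q\|_{L^2}$ and $\psi_n(\cdot+x_n)\rightharpoonup U$ in $H^2$ one gets $\|U\|_{L^2}=\|Q\|_{L^2}$ and hence strong $L^2$ convergence; Gagliardo--Nirenberg then upgrades this to strong $L^{2+8/d}$ convergence; combining $\|\psi_n\|_{L^{2+8/d}}^{2+8/d}\to(1+\tfrac{4}{d})\|\Delta Q\|_{L^2}^2$ with sharp Gagliardo--Nirenberg and weak lower semicontinuity of $\|\Delta\cdot\|_{L^2}$ yields $\|\Delta U\|_{L^2}=\|\Delta Q\|_{L^2}$, hence strong $H^2$ convergence and $E(U)=\lim E(\psi_n)=0$. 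At this point the variational characterization (Lemma~\ref{lem variational characterization}) applies directly and gives $U(x)=e^{i\theta}Q(x+x_0)$. The passage from $\psi_n$ back to $v_n$ is then handled by the already-established estimate $(\ref{strong convergence H a(d,gamma)})$, which is exactly what your interpolation step was aiming at. In short: the missing idea is to exploit the auxiliary $H^2$-bounded sequence $\psi_n$ to compute $E(U)=0$, rather than trying to read extremality of $U$ off a saturated concentration-compactness inequality.
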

Our last result concerns with the global existence of (NL4S) with rough initial data $u_0$ satisfying $\|u_0\|_{L^2(\R^d)}<\|Q\|_{L^2(\R^d)}$. 
\begin{theorem} \label{theorem global existence below ground state}
Let $d=5, 6, 7$ and $u_0\in H^\gamma(\R^d)$ with $\frac{8d}{3d+8}<\gamma<2$ be such that $\|u_0\|_{L^2(\R^d)}<\|Q\|_{L^2(\R^d)}$, where $Q$ is the solution to the ground state equation $(\ref{ground state equation})$. Then the initial value problem \emph{(NL4S)} is globally well-posed. 
\end{theorem}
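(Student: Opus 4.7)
The plan is to deploy the $I$-method, adapted to the non-algebraic biharmonic setting following \cite{VisanZhang07}, and propagate a modified energy on arbitrarily long time intervals. Let $I = I_N$ denote the Fourier multiplier with symbol $m_N(\xi)$ equal to $1$ for $|\xi| \leq N$ and to $(|\xi|/N)^{\gamma-2}$ for $|\xi| \geq 2N$, so that $I:H^\gamma(\R^d) \to H^2(\R^d)$ with $\|Iu\|_{H^2} \lesssim N^{2-\gamma}\|u\|_{H^\gamma}$ and $\|Iu\|_{L^2}\leq\|u\|_{L^2}$. By the sub-critical blowup alternative it suffices to show that, for every $T>0$, one can choose $N=N(T)$ so large that $\|\Delta Iu(t)\|_{L^2}$ stays bounded on $[0,T]$.

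The first reduction uses the sub-critical-mass hypothesis to control the modified potential energy by the modified kinetic energy. Since $\|Iu(t)\|_{L^2} \leq \|u_0\|_{L^2} < \|Q\|_{L^2}$, the sharp Gagliardo--Nirenberg inequality $(1.5)$ applied to $Iu$ gives
\[
\frac{d}{2d+8}\|Iu\|_{L^{2+8/d}}^{2+8/d} \leq \frac{1}{2}\Bigl(\frac{\|u_0\|_{L^2}}{\|Q\|_{L^2}}\Bigr)^{8/d}\|\Delta Iu\|_{L^2}^2.
\]
Setting $\eta := 1 - (\|u_0\|_{L^2}/\|Q\|_{L^2})^{8/d} > 0$ one obtains $\|\Delta Iu(t)\|_{L^2}^2 \leq (2/\eta) E(Iu(t))$, reducing the task to bounding $E(Iu)$. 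Next I would rescale: under $u\mapsto u_\lambda$ from $(1.3)$ the $L^2$ mass is preserved, and a Plancherel computation gives $\|\Delta I_N u_\lambda(0)\|_{L^2}^2 \lesssim \lambda^{-2\gamma} N^{2(2-\gamma)}\|u_0\|_{H^\gamma}^2$. Since $E(Iu_\lambda(0)) \leq \tfrac{1}{2}\|\Delta Iu_\lambda(0)\|_{L^2}^2$, choosing $\lambda := C N^{(2-\gamma)/\gamma}$ with $C=C(\|u_0\|_{H^\gamma},\eta)$ large enough forces $E(Iu_\lambda(0)) \leq 1/2$, and global existence of $u$ on $[0,T]$ becomes equivalent to keeping $E(Iu_\lambda) \leq 1$ on $[0,\lambda^4 T]$.

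The core is the almost-conservation law. A direct computation using $(\mathrm{NL4S})$, exploiting that $E$ would be conserved if $Iu_\lambda$ itself solved the equation, yields
\[
\frac{d}{dt} E(Iu_\lambda) = -\mathrm{Im}\!\int \overline{\bigl[I(|u_\lambda|^{8/d}u_\lambda) - |Iu_\lambda|^{8/d}Iu_\lambda\bigr]}\bigl(\Delta^2 Iu_\lambda - |Iu_\lambda|^{8/d}Iu_\lambda\bigr)\,dx.
\]
On a local subinterval of size $\delta\sim 1$ on which $\|\Delta Iu_\lambda\|_{L^2}\lesssim 1$, Strichartz estimates for the biharmonic group together with a careful analysis of the commutator $I(|u|^{8/d}u) - |Iu|^{8/d}Iu$ give $|E(Iu_\lambda(t_0+\delta)) - E(Iu_\lambda(t_0))| \lesssim N^{-\beta}$ with $\beta = (8-d)/d > 0$. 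A continuity bootstrap iterating this across $\lambda^4 T/\delta$ intervals maintains $E(Iu_\lambda) \leq 1$ on $[0,\lambda^4 T]$ provided $\lambda^4 T N^{-\beta} \ll 1$, i.e.\ $N^{4(2-\gamma)/\gamma - \beta} T \ll 1$, which can be arranged for any fixed $T$ precisely when $4(2-\gamma)/\gamma < \beta$, equivalently $\gamma > 8/(\beta+4) = 8d/(3d+8)$; this is exactly the stated hypothesis.

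The principal obstacle is proving the almost-conservation law with the admissible exponent $\beta = (8-d)/d$. Because $|u|^{8/d}u$ is not algebraic for $d=5,6,7$, one cannot write the commutator explicitly as a multilinear Fourier multiplier in the manner of \cite{ZhuYangZhang11}; instead I would follow \cite{VisanZhang07} and bound $I(|u|^{8/d}u) - |Iu|^{8/d}Iu$ in a dual Strichartz space by means of a Littlewood--Paley decomposition, a fractional chain rule for the $C^{1+8/d}$ map $z\mapsto |z|^{8/d}z$, and the exotic biharmonic Strichartz estimate (3.5) mentioned in the introduction, which is needed to absorb the two spatial derivatives falling on the commutator. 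The $N^{-\beta}$ gain is extracted from the decay of $m_N$ on high-frequency pieces. Once this commutator estimate is in place, the bootstrap and the undoing of the scaling above complete the proof.
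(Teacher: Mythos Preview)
Your proposal is correct and follows essentially the same route as the paper: rescale to normalize $E(Iu_\lambda(0))$, establish an almost-conservation law on local intervals of uniform length (via the commutator decomposition of $\Delta\bigl(IF(u)-F(Iu)\bigr)$, the rough commutator lemma, and the fractional chain rules for $F'$ and the H\"older function $F''$), iterate, and undo the scaling to obtain the threshold $\gamma>8d/(3d+8)$. The only imprecision is that the paper's commutator analysis actually yields a gain $N^{-(2-\gamma+\delta)}$ for any $0<\delta<\gamma+\tfrac{8}{d}-3$, so $\beta$ can be taken arbitrarily close to but strictly less than $(8-d)/d$; since the final condition $4(2-\gamma)/\gamma<\beta$ is open, this still gives exactly $\gamma>8d/(3d+8)$.
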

The proof of this result is inspired by the argument of \cite{FangZhong} which relies on the $I$-method and the sharp Gagliardo-Nirenberg inequality $(\ref{sharp gargliardo nirenberg inequality})$. Using the smallness assumption of the initial data, the sharp Gagliardo-Nirenberg inquality shows that the modified kinetic energy is controlled by the total energy. This allows us to establish the almost conservation law for the modified energy. \newline 
\indent This paper is organized as follows. In Section $\ref{section preliminaries}$, we introduce some notations and recall some results related to our problem. In Section $\ref{section modified local well-posedness}$, we recall some local existence results and prove the modified local well-posedness. In Section $\ref{section modified energy increment}$, we prove two types of modified energy increment. In Section $\ref{section limiting profile}$, we give the proof of Theorem $\ref{theorem weak limiting profile}$, Theorem $\ref{theorem mass concentration}$ and Theorem $\ref{theorem strongly limiting profile}$. Finally, we prove the global well-posedness with small initial data in Section $\ref{section global well-posedness}$.
\section{Preliminaries} \label{section preliminaries}
\setcounter{equation}{0}
In the sequel, the notation $A \lesssim B$ denotes an estimate of the form $A\leq CB$ for some constant $C>0$. The notation $A \sim B$ means that $A \lesssim B$ and $B \lesssim A$. We write $A \ll B$ if $A \leq cB$ for some small constant $c>0$. We also use $\scal{a}:=1+|a|$ and $a\pm:=a\pm \ep$ for some universal constant $0<\ep \ll 1$ and  
\subsection{Nonlinearity}
Let $F(z):= |z|^{\frac{8}{d}} z, d=5, 6, 7$  be the function that defines the nonlinearity in (NL4S). The derivative $F'(z)$ is defined as a real-linear operator acting on $w \in \C$ by
\[
F'(z)\cdot w:= w \partial_z F(z) + \overline{w} \partial_{\overline{z}} F(z), 
\]
where
\[
\partial_z F(z)=\frac{2d+8}{2d} |z|^{\frac{8}{d}}, \quad \partial_{\overline{z}} F(z) = \frac{4}{d}|z|^{\frac{8}{d}} \frac{z}{\overline{z}}.
\]
We shall identify $F'(z)$ with the pair $(\partial_z F(z), \partial_{\overline{z}} F(z))$, and define its norm by
\[
|F'(z)| := |\partial_zF(z)| + |\partial_{\overline{z}} F(z)|.
\]
It is clear that $|F'(z)| = O(|z|^{\frac{8}{d}})$. 
We also have the following chain rule
\[
\partial_k F(u) = F'(u) \partial_k u, 
\] 
for $k \in \{1,\cdots, d\}$. In particular, we have
\[
\nabla F(u)= F'(u) \nabla u. 
\]
\indent We next recall the fractional chain rule to estimate the nonlinearity.
\begin{lem}[Fractional chain rule for $C^1$ functions \cite{ChristWeinstein}, \cite{KenigPonceVega}]\label{lem fractional chain}
Suppose that $G\in C^1(\C, \C)$, and $\alpha \in (0,1)$. Then for $1 <q \leq q_2 <\infty$ and $1<q_1 \leq \infty$ satisfying $\frac{1}{q}=\frac{1}{q_1}+\frac{1}{q_2}$, 
\[
\||\nabla|^\alpha G(u) \|_{L^q_x} \lesssim \|G'(u) \|_{L^{q_1}_x} \||\nabla|^\alpha u \|_{L^{q_2}_x}.
\]
\end{lem}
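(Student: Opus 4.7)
The plan is to establish the bound by combining a real-variable square-function characterization of the fractional derivative with the fundamental theorem of calculus for $G$, following the strategy of Christ--Weinstein and Kenig--Ponce--Vega. Throughout, $G'$ is viewed as a real-linear operator on $\C$ whose norm is $|G'(z)| = |\partial_z G(z)| + |\partial_{\overline{z}} G(z)|$, consistent with the paper's conventions.

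First, for $\alpha \in (0,1)$ and $1 < q < \infty$, I would invoke the Strichartz-type characterization
\[
\||\nabla|^\alpha f\|_{L^q_x(\R^d)} \sim \|N_\alpha f\|_{L^q_x(\R^d)}, \qquad N_\alpha f(x) := \left(\int_0^\infty U_t(f)(x)^2 \frac{dt}{t^{1+2\alpha}}\right)^{1/2},
\]
with $U_t(f)(x) := \frac{1}{|B_t|}\int_{|y|<t} |f(x+y) - f(x)|\,dy$. This reduces the main estimate to controlling $\|N_\alpha G(u)\|_{L^q_x}$ by $\|G'(u)\|_{L^{q_1}_x}\|N_\alpha u\|_{L^{q_2}_x}$.

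Next, I would apply the fundamental theorem of calculus in the form
\[
G(u(x+y)) - G(u(x)) = \left(\int_0^1 G'\bigl(u(x) + s(u(x+y) - u(x))\bigr)\,ds\right)\cdot(u(x+y) - u(x)),
\]
so that, taking moduli and averaging over $|y|<t$,
\[
U_t(G(u))(x) \leq \frac{1}{|B_t|}\int_{|y|<t} J(x,y)\,|u(x+y)-u(x)|\,dy, \quad J(x,y) := \int_0^1 \bigl|G'\bigl(u(x) + s(u(x+y)-u(x))\bigr)\bigr|\,ds.
\]
The central technical step is to dominate $J(x,y)$ by a spatial quantity independent of $y$, namely $J(x,y) \lesssim M(G'(u))(x)$, where $M$ is the Hardy--Littlewood maximal operator. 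This is achieved by decomposing the $y$-range dyadically according to the size of $|u(x+y)-u(x)|$ relative to $|u(x)|$, using the continuity of $G'$ to replace the values of $G'$ along the chord from $u(x)$ to $u(x+y)$ by values of $G'(u)$ at nearby spatial points, and then invoking a Vitali-type covering argument. Given this, one obtains the pointwise bound
\[
N_\alpha G(u)(x) \lesssim M(G'(u))(x) \cdot N_\alpha u(x).
\]

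Finally, I would close the argument by Hölder's inequality in $x$ together with the Hardy--Littlewood maximal theorem (which requires $q_1 > 1$, while the endpoint $q_1 = \infty$ is handled by the trivial pointwise bound $J(x,y) \leq \|G'(u)\|_{L^\infty_x}$), followed by a second application of the Strichartz characterization:
\[
\||\nabla|^\alpha G(u)\|_{L^q_x} \lesssim \|M(G'(u))\cdot N_\alpha u\|_{L^q_x} \leq \|M(G'(u))\|_{L^{q_1}_x}\|N_\alpha u\|_{L^{q_2}_x} \lesssim \|G'(u)\|_{L^{q_1}_x}\||\nabla|^\alpha u\|_{L^{q_2}_x}.
\]
The main obstacle is the pointwise maximal-function domination of $J(x,y)$: for a general $C^1$ function $G$ one cannot invoke an algebraic inequality such as $|G'(z+w)|\lesssim |G'(z)|+|G'(w)|$, so the control must come from the real-variable covering argument sketched above, which is the technical core of the proofs in \cite{ChristWeinstein} and \cite{KenigPonceVega}.
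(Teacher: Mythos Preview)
The paper does not supply its own proof of this lemma; immediately after the statement it simply refers the reader to \cite[Proposition 3.1]{ChristWeinstein} for $1<q_1<\infty$ and to \cite[Theorem A.6]{KenigPonceVega} for $q_1=\infty$. Your proposal goes beyond this by sketching an argument, and the overall architecture you describe --- a square-function characterization of $|\nabla|^\alpha$, the fundamental theorem of calculus applied to $G$, maximal-function control, then H\"older --- is indeed in the spirit of Christ--Weinstein.

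That said, the step you single out as the ``central technical step'' fails as written. The pointwise bound $J(x,y)\lesssim M(G'(u))(x)$ is false for a general $G\in C^1$: take $G'$ to be a smooth bump supported on a small set $A\subset\C$, and choose $u$ so that the straight-line chord from $u(x)$ to $u(x+y)$ crosses $A$ (forcing $J(x,y)\gtrsim 1$) while the spatial preimage $u^{-1}(A)$ has measure $\epsilon$ and sits at a fixed positive distance from $x$ (forcing $M(G'(u))(x)\lesssim\epsilon$). The point is that the values $G'\bigl(u(x)+s(u(x+y)-u(x))\bigr)$ lie along a chord in $\C$ that need not be contained in the range of $u$, so no dyadic decomposition or Vitali-type covering in the \emph{spatial} variable can recover them; the same obstruction defeats the symmetric variant $M(G'(u))(x)+M(G'(u))(x+y)$. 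The argument in \cite{ChristWeinstein} circumvents this by a more delicate route that does not rest on a uniform pointwise bound for $J(x,y)$, and your sketch does not supply a substitute for that mechanism.
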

We refer the reader to \cite[Proposition 3.1]{ChristWeinstein} for the proof of the above estimate when $1<q_1<\infty$, and to \cite[Theorem A.6]{KenigPonceVega} for the proof when $q_1=\infty$. When $G$ is no longer $C^1$, but H\"older continuous, we have the following fractional chain rule.
\begin{lem}[Fractional chain rule for $C^{0,\beta}$ functions \cite{Visanthesis}] \label{lem fractional chain rule holder}
Suppose that $G\in C^{0,\beta}(\C, \C), \beta \in (0,1)$. Then for every $0<\alpha <\beta, 1<q<\infty$, and $\frac{\alpha}{\beta}<\rho<1$,
\[
\||\nabla|^\alpha G(u)\|_{L^q_x} \lesssim \| |u|^{\beta-\frac{\alpha}{\rho}} \|_{L^{q_1}_x} \||\nabla|^\rho u\|^{\frac{\alpha}{\rho}}_{L^{\frac{\alpha}{\rho}q_2}_x},
\] 
provided $\frac{1}{q}=\frac{1}{q_1}+\frac{1}{q_2}$ and $\left(1-\frac{\alpha}{\beta \rho}\right)q_1>1$.
\end{lem}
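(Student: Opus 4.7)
The plan is to derive the bound by combining a Littlewood--Paley decomposition with Stein's pointwise characterization of fractional smoothness and the Hardy--Littlewood maximal inequality. For $1<q<\infty$ and $0<\alpha<1$, the Littlewood--Paley square function identity
$$\||\nabla|^\alpha G(u)\|_{L^q_x}\sim \Big\|\Big(\sum_N N^{2\alpha}|P_N G(u)|^2\Big)^{1/2}\Big\|_{L^q_x}$$
reduces the task to a pointwise bound on each dyadic piece $P_N G(u)$ that is controllable in the square function sense.

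For a fixed dyadic $N$, the kernel $K_N$ of $P_N$ has vanishing integral, so
$$P_N G(u)(x)=\int K_N(y)\bigl[G(u(x-y))-G(u(x))\bigr]\,dy,$$
and the Hölder assumption gives $|G(u(x-y))-G(u(x))|\lesssim |u(x-y)-u(x)|^\beta$. I would factor this as
$$|u(x-y)-u(x)|^\beta=|u(x-y)-u(x)|^{\alpha/\rho}\cdot |u(x-y)-u(x)|^{\beta-\alpha/\rho},$$
which is legitimate since the hypothesis $\rho>\alpha/\beta$ forces the second exponent to be strictly positive. The first factor is treated by the classical Stein pointwise estimate
$$|u(x)-u(x-y)|\lesssim |y|^\rho\bigl[\mathcal{M}(|\nabla|^\rho u)(x)+\mathcal{M}(|\nabla|^\rho u)(x-y)\bigr]\qquad(0<\rho<1),$$
raised to the power $\alpha/\rho$, which yields the decisive gain $|y|^\alpha$ needed to absorb the concentration of $K_N$. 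The second factor is bounded trivially by $(|u(x)|+|u(x-y)|)^{\beta-\alpha/\rho}$.

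Substituting these estimates and using $|K_N(y)|\,|y|^\alpha\lesssim N^{d-\alpha}\varphi(Ny)$ for an $L^1$ bump $\varphi$ turns the $y$-integral into a convolution dominated by the Hardy--Littlewood maximal operator, yielding the pointwise control
$$N^\alpha |P_N G(u)(x)|\lesssim \mathcal{M}\Bigl(|u|^{\beta-\alpha/\rho}\bigl[\mathcal{M}(|\nabla|^\rho u)\bigr]^{\alpha/\rho}\Bigr)(x)$$
(plus symmetric terms evaluated at $x$). Taking $L^q_x$ norms, applying $L^q$-boundedness of $\mathcal{M}$, then Hölder's inequality with $1/q=1/q_1+1/q_2$, and finally $L^{q_2}$-boundedness of $\mathcal{M}$ to bound $[\mathcal{M}(|\nabla|^\rho u)]^{\alpha/\rho}$ in $L^{\alpha q_2/\rho}_x$, gives the desired estimate. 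The main obstacle, and the source of the technical hypothesis $(1-\alpha/(\beta\rho))q_1>1$, is to guarantee that the maximal inequality is valid on the Lebesgue space associated with the factor $|u|^{\beta-\alpha/\rho}$---this is the price one pays for $G$ being only Hölder rather than $C^1$ as in Lemma~\ref{lem fractional chain}. Translating the uniform pointwise bound into the square function estimate is a minor but technically delicate step that proceeds by a standard Bernstein-type argument along the dyadic decomposition.
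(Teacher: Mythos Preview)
The paper does not prove this lemma at all; it simply refers the reader to \cite[Proposition~A.1]{Visanthesis}. Your outline assembles the correct ingredients---the Littlewood--Paley square-function characterisation, the cancellation $\int K_N=0$, the H\"older bound on $G$, the factorisation
\[
|u(x-y)-u(x)|^\beta=|u(x-y)-u(x)|^{\alpha/\rho}\cdot|u(x-y)-u(x)|^{\beta-\alpha/\rho},
\]
Stein's pointwise inequality applied to the first factor, and maximal-function control---and these are indeed the ideas behind the argument in the cited reference.

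There is, however, a genuine gap at the step you call ``minor but technically delicate.'' Your chain of estimates produces
\[
N^{\alpha}\,|P_N G(u)(x)|\ \lesssim\ F(x)
\]
with the right-hand side \emph{independent of $N$}. Fed into $\bigl(\sum_N N^{2\alpha}|P_N G(u)|^2\bigr)^{1/2}$ this gives a divergent sum; a bound uniform in $N$ provides no $\ell^2$ summability whatsoever, and no ``Bernstein-type argument'' can manufacture decay from a uniform pointwise inequality. Extracting decay in $N$ is not bookkeeping but the heart of the proof. In \cite{Visanthesis} (and analogously in the $C^1$ case of \cite{ChristWeinstein}) this is done by comparing $u$ to its truncation $P_{\le cN}u$ at each output frequency $N$ and treating $P_N G(P_{\le cN}u)$ and $P_N\bigl[G(u)-G(P_{\le cN}u)\bigr]$ separately: on the low-frequency piece one exploits the extra smoothness of $P_{\le cN}u$ to obtain a factor $N^{\alpha-\beta\rho}$, and the hypothesis $\rho>\alpha/\beta$ is exactly what makes this exponent negative; on the high-frequency piece one expands $P_{>cN}u$ dyadically and uses a vector-valued (Fefferman--Stein) maximal inequality so that the $\ell^2$ sum over output frequencies $N$ reorganises into an $\ell^2$ sum over the input frequencies of $u$. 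The side condition $\bigl(1-\tfrac{\alpha}{\beta\rho}\bigr)q_1>1$ also enters in this reorganisation rather than merely as an integrability check on a single maximal function as you suggest. Your sketch needs to make this high/low splitting and the passage to the vector-valued inequality explicit; without them the argument does not close.
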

We refer the reader to \cite[Proposition A.1]{Visanthesis} for the proof of this result. We also need the following fractional Leibniz rule.
\begin{lem}[Fractional Leibniz rule \cite{Kato95}] \label{lem fractional leibniz rule}
Let $F\in C^k(\C, \C), k \in \N\backslash \{0\}$. Assume that there is $\nu\geq k$ such that
\[
|D^iF(z)|\lesssim |z|^{\nu-i}, \quad \forall z\in \C, i=1,...,k.
\]
Then for $\gamma \in [0,k], 1<q \leq q_2<\infty$ and $1<q_1 \leq \infty$ satisfying $\frac{1}{q}=\frac{\nu-1}{q_1}+\frac{1}{q_2}$,
\begin{align}
\||\nabla|^\gamma F(u)\|_{L^q_x} \lesssim \|u\|^{\nu-1}_{L^{q_1}_x} \||\nabla|^\gamma u\|_{L^{q_2}_x}. \label{fractional leibniz rule}
\end{align}
Moreover, if $F$ is a homogeneous polynomial in $u$ and $\overline{u}$, then $(\ref{fractional leibniz rule})$ holds true for any $\gamma\geq 0$.
\end{lem}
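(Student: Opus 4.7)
The proof splits by the size of $\gamma$. The case $\gamma=0$ is trivial: the pointwise bound $|F(z)|\lesssim|z|^\nu$ (obtained by integrating $|F'(z)|\lesssim|z|^{\nu-1}$) and Hölder's inequality with $\frac{1}{q}=\frac{\nu-1}{q_1}+\frac{1}{q_2}$ give the claim. For $\gamma\in(0,1)$, the plan is to invoke the fractional chain rule of Lemma \ref{lem fractional chain} with $G:=F\in C^1$ and the exponent pair $\bigl(\tfrac{q_1}{\nu-1},q_2\bigr)$, which satisfies $\frac{\nu-1}{q_1}+\frac{1}{q_2}=\frac{1}{q}$. Combined with $\|F'(u)\|_{L^{q_1/(\nu-1)}_x}\lesssim\|u\|_{L^{q_1}_x}^{\nu-1}$ (another application of Hölder using $|F'(u)|\lesssim|u|^{\nu-1}$), this yields the desired bound. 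For the endpoint $\gamma=1$, the same conclusion follows from the ordinary chain rule $\nabla F(u)=F'(u)\nabla u$ and Hölder. Note that the endpoint $q_1=\infty$ is treated by the corresponding endpoint version of Lemma \ref{lem fractional chain}.

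For $\gamma\in(1,k]$, write $\gamma=m+\alpha$ with $m\in\{1,\dots,k\}$ and $\alpha\in[0,1)$. Apply the Faà di Bruno formula to expand $\nabla^m F(u)$ as a finite sum of terms of the form
\[
F^{(j)}(u)\prod_{i=1}^{j}\nabla^{\beta_i}u,\qquad 1\leq j\leq m,\ \sum_{i=1}^{j}\beta_i=m,\ \beta_i\geq 1.
\]
Applying $|\nabla|^\alpha$ to each such term and iterating the fractional Leibniz rule for products (a standard consequence of Lemma \ref{lem fractional chain} applied factor-by-factor) distributes the $\alpha$ derivatives among the factors. The hypothesis $|F^{(j)}(z)|\lesssim|z|^{\nu-j}$ handles the leading factor, giving $|u|^{\nu-j}$ which is placed in an appropriate $L^{r_0}_x$. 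Each remaining factor carries some order of derivative $s\in[0,\gamma]$; these are merged into the two target norms via Hölder together with the Gagliardo–Nirenberg interpolation
\[
\||\nabla|^s u\|_{L^{r}_x}\lesssim \|u\|_{L^{q_1}_x}^{1-s/\gamma}\||\nabla|^\gamma u\|_{L^{q_2}_x}^{s/\gamma},
\]
with $r$ chosen so that Hölder's exponent identity reduces to the prescribed $\frac{1}{q}=\frac{\nu-1}{q_1}+\frac{1}{q_2}$.

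Finally, when $F$ is a homogeneous polynomial of degree $\nu$ in $u$ and $\overline{u}$, the $C^k$ hypothesis imposes no restriction on $\gamma$. Decomposing $F$ into monomials $u^a\overline{u}^b$ with $a+b=\nu$ and applying the fractional Leibniz rule for products inductively in the number of factors reduces $\||\nabla|^\gamma(u^a\overline{u}^b)\|_{L^q_x}$ to a sum of terms in which $|\nabla|^\gamma$ lands on a single factor and the remaining $\nu-1$ factors are measured in $L^{q_1}_x$; the final bound follows again from Hölder, with Gagliardo–Nirenberg used to reabsorb any intermediate fractional derivatives. The main obstacle throughout is the combinatorial bookkeeping of exponents in the case $\gamma>1$: one must simultaneously satisfy Hölder's identity and the Gagliardo–Nirenberg scaling, which requires carefully choosing the intermediate spaces $L^{r_i}_x$ and exploiting the constraint $\sum_i\beta_i+\alpha=\gamma$ to collapse the resulting multilinear expression back into the two prescribed norms.
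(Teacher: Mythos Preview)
The paper does not supply its own proof of this lemma; it simply refers the reader to the Appendix of \cite{Kato95}. Your outline is in the spirit of Kato's argument (reduce to $\gamma\in[0,1]$ via differentiation and the Fa\`a di Bruno formula, then use the $C^1$ fractional chain rule and interpolation to close), so there is nothing to compare at the level of strategy.

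Two technical points in your sketch deserve more care before the argument would be complete. First, when you invoke Lemma~\ref{lem fractional chain} for $\gamma\in(0,1)$ with the pair $\bigl(\tfrac{q_1}{\nu-1},q_2\bigr)$, that lemma requires the first exponent to lie in $(1,\infty]$; this forces $q_1>\nu-1$, which is not assumed in the statement. One handles the remaining range by interpolation or by appealing to the weighted versions of the chain rule, but this step is not free. Second, the ``fractional Leibniz rule for products'' that you use to distribute $|\nabla|^\alpha$ across the Fa\`a di Bruno factors is the Kato--Ponce product estimate, which is a separate (though standard) input and not literally a consequence of Lemma~\ref{lem fractional chain}; it should be cited as such. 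With those two adjustments your bookkeeping with Gagliardo--Nirenberg closes as you indicate.
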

The reader can find the proof of this fractional Leibniz rule in \cite[Appendix]{Kato95}.
\subsection{Strichartz estimates} \label{subsection fractional derivative}
Let $I \subset \R$ and $p, q \in [1,\infty]$. We define the mixed norm
\[
\|u\|_{L^p_t(I, L^q_x)} := \Big( \int_I \Big( \int_{\R^d} |u(t,x)|^q dx \Big)^{\frac{1}{q}} \Big)^{\frac{1}{p}}
\] 
with a usual modification when either $p$ or $q$ are infinity. When there is no risk of confusion, we may write $L^p_t L^q_x$ instead of $L^p_t(I,L^q_x)$. We also use $L^p_{t,x}$ when $p=q$.
\begin{defi}
A pair $(p,q)$ is said to be \textbf{Schr\"odinger admissible}, for short $(p,q) \in S$, if 
\[
(p,q) \in [2,\infty]^2, \quad (p,q,d) \ne (2,\infty,2), \quad \frac{2}{p}+\frac{d}{q} \leq \frac{d}{2}.
\]
\end{defi}
Throughout this paper, we denote for $(p,q)\in [1,\infty]^2$,
\begin{align}
\gamma_{p,q}=\frac{d}{2}-\frac{d}{q}-\frac{4}{p}. \label{define gamma pq}
\end{align}
\begin{defi}
A pair $(p,q)$ is called \textbf{biharmonic admissible}, for short $(p,q)\in B$, if 
\[
(p,q) \in S,\quad q<\infty, \quad \gamma_{p,q}=0.
\]
\end{defi}
\begin{prop}[Strichartz estimate for fourth-order Schr\"odinger equation \cite{Dinhfract}] \label{prop generalized strichartz estimate}
Let $\gamma \in \R$ and $u$ be a (weak) solution to the linear fourth-order Schr\"odinger equation namely
\[
u(t)= e^{it\Delta^2}u_0 + \int_0^t e^{i(t-s)\Delta^2} F(s) ds,
\]
for some data $u_0, F$. Then for all $(p,q)$ and $(a,b)$ Schr\"odinger admissible with $q<\infty$ and $b<\infty$,
\begin{align}
\||\nabla|^\gamma u\|_{L^p_t(\R, L^q_x)} \lesssim \||\nabla|^{\gamma+\gamma_{p,q}} u_0\|_{L^2_x} + \||\nabla|^{\gamma+\gamma_{p,q}-\gamma_{a',b'} -4} F\|_{L^{a'}_t(\R, L^{b'}_x)}. \label{generalized strichartz estimate}
\end{align}
Here $(a,a')$ and $(b,b')$ are conjugate pairs, and $\gamma_{p,q}, \gamma_{a',b'}$ are defined as in $(\ref{define gamma pq})$.
\end{prop}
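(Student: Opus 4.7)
The plan is to build up from the basic dispersive estimate for $e^{it\Delta^2}$ to the full Strichartz estimate via three steps: the dispersive decay, the Keel--Tao abstract Strichartz theorem applied at biharmonic admissible pairs, and finally a Sobolev embedding that absorbs the defects $\gamma_{p,q}$ and $\gamma_{a,b}$ whenever the pairs are Schr\"odinger admissible but not biharmonic admissible.

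Step 1 (dispersive estimate). The free biharmonic kernel $K_t(x) = (2\pi)^{-d}\int_{\R^d} e^{ix\cdot \xi - it|\xi|^4}\,d\xi$ obeys $\|K_t\|_{L^\infty_x}\lesssim |t|^{-d/4}$ for $t\neq 0$, via a Littlewood--Paley dyadic decomposition combined with van der Corput on each piece (or by direct stationary phase). Interpolating this with the $L^2$-isometry of $e^{it\Delta^2}$ yields
$$\|e^{it\Delta^2}f\|_{L^q_x}\lesssim |t|^{-\frac{d}{4}(1-2/q)}\|f\|_{L^{q'}_x},\qquad q\in[2,\infty].$$

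Step 2 (biharmonic admissible case). With decay exponent $\sigma = d/4$, the abstract Keel--Tao theorem applied to the unitary group $e^{it\Delta^2}$ furnishes the homogeneous estimate $\|e^{it\Delta^2}u_0\|_{L^p_tL^q_x}\lesssim\|u_0\|_{L^2_x}$ for every biharmonic admissible pair $(p,q)$, i.e., every $(p,q)\in [2,\infty]^2$ with $\frac{4}{p}+\frac{d}{q}=\frac{d}{2}$ and $q<\infty$ (excluding only the endpoint $(p,q,d)=(2,\infty,4)$). Together with its $TT^*$ dual and the Christ--Kiselev lemma (valid whenever $a'<p$), this produces the inhomogeneous companion
$$\Big\|\int_0^t e^{i(t-s)\Delta^2}F(s)\,ds\Big\|_{L^p_tL^q_x}\lesssim \|F\|_{L^{a'}_tL^{b'}_x}$$
for any two biharmonic admissible pairs $(p,q)$ and $(a,b)$. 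A direct computation gives $\gamma_{a',b'}=-4$ whenever $(a,b)$ is biharmonic admissible, so the statement of the proposition at $\gamma=\gamma_{p,q}=\gamma_{a,b}=0$ reduces to this base case.

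Step 3 (upgrade to Schr\"odinger admissible pairs). Given a Schr\"odinger admissible pair $(p,q)$ with $q<\infty$, the unique biharmonic admissible pair $(p,\tilde q)$ sharing the same $p$ satisfies $\frac{d}{\tilde q}-\frac{d}{q}=\gamma_{p,q}$. Sobolev embedding $\|g\|_{L^q_x}\lesssim \||\nabla|^{\gamma_{p,q}}g\|_{L^{\tilde q}_x}$ composed with the base estimate on $(p,\tilde q)$, and using that $|\nabla|^{\gamma_{p,q}}$ commutes with $e^{it\Delta^2}$, yields $\|e^{it\Delta^2}u_0\|_{L^p_tL^q_x}\lesssim \||\nabla|^{\gamma_{p,q}}u_0\|_{L^2_x}$. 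Applying $|\nabla|^\gamma$ to both sides gives the general $\gamma$ version. A symmetric Sobolev step on the $(a,b)$ side (equivalent to dualising the homogeneous estimate on $(a,b)$ and then embedding) produces the loss $-\gamma_{a',b'}-4=\gamma_{a,b}$ on $F$, assembling into the claimed inequality.

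Main obstacle. Each individual ingredient is classical, so the real work is the bookkeeping. The delicate items are: identifying the Sobolev exponent $\frac{d}{\tilde q}-\frac{d}{q}$ with the defect $\gamma_{p,q}$, verifying the algebraic identity $-\gamma_{a',b'}-4=\gamma_{a,b}$, and checking that the restriction $q<\infty$, $b<\infty$ exactly excludes the simultaneous failure of Christ--Kiselev (at $a'=p$) and of the Keel--Tao endpoint $(p,q,d)=(2,\infty,4)$.
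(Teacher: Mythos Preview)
Your Steps 1 and 2 are fine, and the algebraic identity $-\gamma_{a',b'}-4=\gamma_{a,b}$ is correct. The gap is in Step 3. The Sobolev embedding $\|g\|_{L^q_x}\lesssim\||\nabla|^{\gamma_{p,q}}g\|_{L^{\tilde q}_x}$ with $\frac{d}{\tilde q}-\frac{d}{q}=\gamma_{p,q}$ is only valid when $\gamma_{p,q}\geq 0$, i.e.\ when $q\geq\tilde q$. But the Schr\"odinger admissible region in this paper is $\frac{2}{p}+\frac{d}{q}\leq\frac{d}{2}$, and on the sharp Schr\"odinger line one computes $\gamma_{p,q}=-\frac{2}{p}<0$; more generally every pair strictly between the Schr\"odinger and biharmonic lines has $\gamma_{p,q}<0$. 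There the inequality you invoke would read $\|g\|_{L^q_x}\lesssim\||\nabla|^{-s}g\|_{L^{\tilde q}_x}$ with $s>0$ and $\tilde q>q$, which is false. The same obstruction appears on the dual side when $\gamma_{a,b}<0$, and this case is actually used in the paper: in Corollary~\ref{coro fourth order strichartz estimate} the pair $(a,b)=(2,\frac{2d}{d-2})$ lies on the sharp Schr\"odinger line with $\gamma_{a,b}=-1$, which is precisely how the crucial estimate $(\ref{strichartz estimate biharmonic fourth-order})$ is obtained.

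The point you are missing is that the range $\gamma_{p,q}<0$ encodes a genuine \emph{smoothing} effect of $e^{it\Delta^2}$, and this cannot be extracted from the crude decay $\|K_t\|_{L^\infty_x}\lesssim|t|^{-d/4}$ alone. One needs a frequency-dependent input. The route the paper refers to (the ``scaling technique'' of \cite{Dinhfract}) is to Littlewood--Paley localise to $|\xi|\sim N$, observe that the Hessian of $t|\xi|^4$ there is $\sim tN^2$, so that stationary phase gives the frequency-localised dispersive bound $\|e^{it\Delta^2}P_N f\|_{L^\infty_x}\lesssim (N^2|t|)^{-d/2}\|P_N f\|_{L^1_x}$; Keel--Tao then yields $\|e^{it\Delta^2}P_N u_0\|_{L^p_tL^q_x}\lesssim N^{-2/p}\|P_N u_0\|_{L^2_x}$ on the \emph{sharp Schr\"odinger} line, and Littlewood--Paley square-function summation recovers the full statement with the correct (possibly negative) exponent $\gamma_{p,q}$. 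Alternatively one may use the refined pointwise kernel bound of Ben-Artzi--Koch--Saut \cite{Ben-ArtziKochSaut}. Either way, your argument needs one of these ingredients to cover the region $\gamma_{p,q}<0$.
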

We refer the reader to \cite[Proposition 2.1]{Dinhfract} for the proof of Proposition $\ref{prop generalized strichartz estimate}$. The proof is based on the scaling technique instead of using a dedicate dispersive estimate of \cite{Ben-ArtziKochSaut} for the fundamental solution of the homogeneous fourth-order Schr\"odinger equation. Note that the estimate $(\ref{generalized strichartz estimate})$ is exactly the one given in \cite{MiaoZhang}, \cite{Pausader} or \cite{Pausadercubic} where the author considered $(p,q)$ and $(a,b)$ are either sharp Schr\"odinger admissible, i.e.
\[
p, q \in [2,\infty]^2, \quad (p,q,d) \ne (2,\infty,2), \quad \frac{2}{p}+\frac{d}{q}=\frac{d}{2},
\]
or biharmonic admissible. \newline
\indent The following result is a direct consequence of $(\ref{generalized strichartz estimate})$. 
\begin{coro} \label{coro fourth order strichartz estimate}
Let $\gamma \in \R$ and $u$ a (weak) solution to the linear fourth-order Schr\"odinger equation for some data $u_0, F$. Then for all $(p,q)$ and $(a,b)$ biharmonic admissible,
\begin{align}
\||\nabla|^\gamma u\|_{L^p_t(\R,L^q_x)} \lesssim \|\|\nabla|^\gamma u_0\|_{L^2_x} + \||\nabla|^\gamma F\|_{L^{a'}_t(\R,L^{b'}_x)}, \label{strichartz estimate biharmonic}
\end{align}
and
\begin{align}
\|\Delta u\|_{L^p_t(\R,L^q_x)} \lesssim \|\Delta u_0\|_{L^2_x} + \|\nabla F\|_{L^2_t(\R,L^{\frac{2d}{d+2}}_x)}. \label{strichartz estimate biharmonic fourth-order}
\end{align}
\end{coro}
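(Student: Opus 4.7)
The proof is essentially a direct specialization of the general Strichartz estimate \eqref{generalized strichartz estimate} from Proposition~\ref{prop generalized strichartz estimate}, so the plan is entirely computational: work out the value of $\gamma_{a',b'}$ in terms of $\gamma_{a,b}$ and plug in suitable admissible pairs.

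First I would record the elementary identity
\[
\gamma_{a',b'} = \tfrac{d}{2} - \tfrac{d}{b'} - \tfrac{4}{a'} = \tfrac{d}{2} - d + \tfrac{d}{b} - 4 + \tfrac{4}{a} = -4 - \gamma_{a,b},
\]
which follows from $\tfrac{1}{a'}=1-\tfrac{1}{a}$ and $\tfrac{1}{b'}=1-\tfrac{1}{b}$. This is the only algebraic observation needed.

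For the first estimate \eqref{strichartz estimate biharmonic}, both $(p,q)$ and $(a,b)$ are biharmonic admissible, so $\gamma_{p,q}=\gamma_{a,b}=0$, and the identity above yields $\gamma_{a',b'}=-4$. Substituting into \eqref{generalized strichartz estimate} gives the derivative count $\gamma+\gamma_{p,q}-\gamma_{a',b'}-4=\gamma$ on both sides, which is exactly \eqref{strichartz estimate biharmonic}.

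For the second estimate \eqref{strichartz estimate biharmonic fourth-order}, I would apply \eqref{generalized strichartz estimate} with $\gamma=2$, with $(p,q)$ biharmonic admissible, and with the non-biharmonic but Schr\"odinger admissible pair $(a,b)=\bigl(2,\tfrac{2d}{d-2}\bigr)$ (valid since $d=5,6,7$, so $\tfrac{2}{a}+\tfrac{d}{b}=1+\tfrac{d-2}{2}=\tfrac{d}{2}$, and $(a,b,d)\neq(2,\infty,2)$). For this pair, $\gamma_{a,b}=\tfrac{d}{2}-\tfrac{d-2}{2}-2=-1$, so $\gamma_{a',b'}=-3$, and the derivative count on the source side becomes $2+0-(-3)-4=1$, with $(a',b')=\bigl(2,\tfrac{2d}{d+2}\bigr)$. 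Using the standard equivalence $\||\nabla|^{2}v\|_{L^{q}_x}\sim\|\Delta v\|_{L^{q}_x}$ for $1<q<\infty$ (which applies since biharmonic admissibility requires $q<\infty$) to convert $\||\nabla|^2 u\|_{L^p_t L^q_x}$ and $\||\nabla|^2 u_0\|_{L^2_x}$ into the $\Delta$-forms, one obtains \eqref{strichartz estimate biharmonic fourth-order}.

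There is no real obstacle: the corollary is a bookkeeping exercise and the only subtlety is checking that the chosen pair $\bigl(2,\tfrac{2d}{d-2}\bigr)$ is Schr\"odinger admissible in the relevant dimensions, together with the Riesz-type identification $\||\nabla|^2\cdot\|_{L^q}\sim\|\Delta\cdot\|_{L^q}$ used to pass between $|\nabla|^{2}$ and $\Delta$.
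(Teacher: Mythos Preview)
Your proof is correct and is exactly the direct computation the paper intends when it calls the corollary a ``direct consequence'' of Proposition~\ref{prop generalized strichartz estimate}; the paper gives no further argument. The only small point you leave implicit is the companion identification $\||\nabla|F\|_{L^{2d/(d+2)}_x}\sim\|\nabla F\|_{L^{2d/(d+2)}_x}$ via Riesz transforms, needed alongside your $|\nabla|^2\leftrightarrow\Delta$ step to pass from $|\nabla|^1$ to $\nabla$ on the source term.
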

\subsection{Littlewood-Paley decomposition}
Let $\varphi$ be a radial smooth bump function supported in the ball $|\xi|\leq 2$ and  equal to 1 on the ball $|\xi|\leq 1$.  For $M=2^k, k \in \Z$, we define the Littlewood-Paley operators 
\begin{align*}
\widehat{P_{\leq M} f}(\xi) &:= \varphi(M^{-1}\xi) \hat{f}(\xi), \\
\widehat{P_{>M} f}(\xi) &:= (1-\varphi(M^{-1}\xi)) \hat{f}(\xi), \\
\widehat{P_M f}(\xi) &:= (\varphi(M^{-1} \xi) - \varphi(2M^{-1}\xi)) \hat{f}(\xi),
\end{align*}
where $\hat{\cdot}$ is the spatial Fourier transform. Similarly, we can define
\[
P_{<M} := P_{\leq M}-P_M, \quad P_{\geq M} := P_{>M}+ P_M,
\] 
and for $M_1 \leq M_2$,
\[
P_{M_1 < \cdot \leq M_2}:= P_{\leq M_2} - P_{\leq M_1} = \sum_{M_1 < M \leq M_2} P_M.
\]
We recall the following standard Bernstein inequalities (see e.g. \cite[Chapter 2]{BCDfourier} or \cite[Appendix]{Tao}).
\begin{lem}[Bernstein inequalities] \label{lem bernstein inequalities}
Let $\gamma\geq 0$ and $1 \leq p \leq q \leq \infty$. We have
\begin{align*}
\|P_{\geq M} f\|_{L^p_x} &\lesssim M^{-\gamma} \||\nabla|^\gamma P_{\geq M} f\|_{L^p_x}, \\
\|P_{\leq M} |\nabla|^\gamma f\|_{L^p_x} &\lesssim M^\gamma \|P_{\leq M} f\|_{L^p_x}, \\
\|P_M |\nabla|^{\pm \gamma} f\|_{L^p_x} & \sim M^{\pm \gamma} \|P_M f\|_{L^p_x}, \\
\|P_{\leq M} f\|_{L^q_x} &\lesssim M^{\frac{d}{p}-\frac{d}{q}} \|P_{\leq M} f\|_{L^p_x}, \\
\|P_M f\|_{L^q_x} &\lesssim M^{\frac{d}{p}-\frac{d}{q}} \|P_M f\|_{L^p_x}.
\end{align*}
\end{lem}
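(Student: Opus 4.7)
The plan is to realise each Littlewood--Paley operator as convolution against an explicit kernel derived from its Fourier multiplier, and then invoke Young's convolution inequality. Dyadic scaling reduces everything to $M=1$: $P_{\leq M} f = K_M \ast f$ with $K_M(x) = M^{d}\check\varphi(Mx)$, and a change of variables gives $\|K_M\|_{L^r} = M^{d/r'}\|K_1\|_{L^r}$ for every $r\in[1,\infty]$; since $\varphi\in C_c^\infty$, $K_1$ is Schwartz. The annular projector $P_M$ admits the analogous representation using the annular symbol $\psi(\eta):=\varphi(\eta)-\varphi(2\eta)$ in place of $\varphi$.

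For the two $L^p\to L^q$ estimates, I apply Young's inequality with $1/r'=1/p-1/q$ to the convolution $\widetilde P_{\leq M}(P_{\leq M} f) = \widetilde K_M \ast (P_{\leq M} f)$, where $\widetilde P_{\leq M}$ is a fattened cut-off chosen so that $\widetilde P_{\leq M}P_{\leq M}=P_{\leq M}$. This yields $\|P_{\leq M} f\|_{L^q} \lesssim \|\widetilde K_M\|_{L^r}\|P_{\leq M} f\|_{L^p} \lesssim M^{d(1/p-1/q)}\|P_{\leq M} f\|_{L^p}$, and the analogous argument handles $P_M$. For the annular derivative equivalence $\|P_M|\nabla|^{\pm\gamma}f\|_{L^p}\sim M^{\pm\gamma}\|P_M f\|_{L^p}$, the key observation is that on $\mathrm{supp}\,\psi(M^{-1}\cdot)$ one has $|\xi|\sim M$, so $\psi(M^{-1}\xi)|\xi|^{\pm\gamma} = M^{\pm\gamma}\,m^{\pm}(M^{-1}\xi)$ with $m^{\pm}(\eta):=|\eta|^{\pm\gamma}\psi(\eta)\in C_c^\infty(\R^d\setminus\{0\})$; hence $P_M|\nabla|^{\pm\gamma}$ is $M^{\pm\gamma}$ times convolution with a Schwartz kernel. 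Young's inequality gives the upper bound, and the matching lower bound is obtained by applying the same argument to $|\nabla|^{\mp\gamma}$ after inserting a fattened annular projector $\widetilde P_M$ with $\widetilde P_M P_M=P_M$. The estimate $\|P_{\leq M}|\nabla|^\gamma f\|_{L^p}\lesssim M^{\gamma}\|P_{\leq M} f\|_{L^p}$ then follows from summing the annular Bernsteins over the Littlewood--Paley decomposition $P_{\leq M} = P_{\leq 1} + \sum_{1<M'\leq M} P_{M'}$.

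Finally, the first inequality $\|P_{\geq M} f\|_{L^p}\lesssim M^{-\gamma}\||\nabla|^\gamma P_{\geq M} f\|_{L^p}$ is handled by the dyadic decomposition $P_{\geq M} = \sum_{j\geq 0} P_{2^j M}$: applying the annular Bernstein gives $\|P_{2^j M} f\|_{L^p}\lesssim (2^j M)^{-\gamma}\||\nabla|^\gamma P_{2^j M} f\|_{L^p}$, and since each $P_{2^j M}$ is $L^p$-bounded uniformly in $j$ (with constant $\|K_1\|_{L^1}$) and satisfies $P_{2^j M}=P_{2^j M}P_{\geq M}$, one bounds $\||\nabla|^\gamma P_{2^j M} f\|_{L^p}\lesssim \||\nabla|^\gamma P_{\geq M} f\|_{L^p}$; summing the convergent geometric series $\sum_{j\geq 0} 2^{-j\gamma}$ (for $\gamma>0$; the case $\gamma=0$ is trivial) closes the argument. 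The main technical nuisance is that $|\eta|^{\pm\gamma}$ fails to be smooth at the origin for non-integer $\gamma$, so multipliers involving the ball cut-off $\varphi(M^{-1}\cdot)$ do not directly furnish Schwartz kernels; the clean fix, adopted above, is always to reduce to annular symbols via Littlewood--Paley pieces where the origin is avoided.
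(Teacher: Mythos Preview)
The paper does not supply a proof of this lemma; it merely records the inequalities as standard and refers the reader to \cite[Chapter~2]{BCDfourier} and \cite[Appendix]{Tao}. Your argument is precisely the classical one found in those references: represent each Littlewood--Paley projector as convolution with a rescaled Schwartz kernel, reduce by dilation to $M=1$, and invoke Young's inequality, handling the potential singularity of $|\xi|^{\pm\gamma}$ at the origin by working on annuli. The proof is correct; two cosmetic points worth tightening are (i) in the decomposition $P_{\geq M}=\sum_{j\geq 0}P_{2^jM}$ the identity $P_{2^jM}=P_{2^jM}P_{\geq M}$ fails for $j=0$ on the inner edge of the annulus, so one should instead argue with $h:=P_{\geq M}f$ directly (its Fourier support lies in $\{|\xi|\geq M/2\}$, whence $h=\sum_{M'\gtrsim M}P_{M'}h$ and the geometric sum goes through), and (ii) the telescoping $P_{\leq M}=P_{\leq 1}+\sum_{1<M'\leq M}P_{M'}$ still leaves the low--frequency block $|\nabla|^\gamma P_{\leq 1}$ to be treated, which you should dispatch by a further dyadic decomposition toward the origin (exactly as your closing remark indicates). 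Neither point is a genuine gap.
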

\subsection{$I$-operator}
Let $0\leq \gamma <2$ and $N\gg 1$. We define the Fourier multiplier $I_N$ by
\[
\widehat{I_N f}(\xi):= m_N(\xi) \hat{f}(\xi),
\]
where $m_N$ is a smooth, radially symmetric, non-increasing function such that 
\begin{align*}
m_N(\xi) := \left\{ 
\begin{array}{cl}
1 &\text{if } |\xi|\leq N, \\
(N^{-1}|\xi|)^{\gamma-2} & \text{if } |\xi| \geq 2N.
\end{array}
\right. 
\end{align*}
We shall drop the $N$ from the notation and write $I$ and $m$ instead of $I_N$ and $m_N$. We recall (see \cite[Lemma 2.7]{Dinhmass}) some basic properties of the $I$-operator in the following lemma. 
\begin{lem} \label{lem properties I operator}
Let $0\leq \sigma \leq \gamma<2$ and $1<q<\infty$. Then
\begin{align}
\|I f\|_{L^q_x} &\lesssim \|f\|_{L^q_x}, \label{property 1} \\
\| |\nabla|^\sigma P_{>N} f\|_{L^q_x} &\lesssim N^{\sigma-2} \|\Delta I f\|_{L^q_x}, \label{property 2} \\
\|\scal{\nabla}^\sigma f\|_{L^q_x} &\lesssim \|\scal{\Delta} I f\|_{L^q_x}, \label{property 3} \\
\|f\|_{H^\gamma_x} \lesssim \|If\|_{H^2_x} &\lesssim N^{2-\gamma} \|f\|_{H^\gamma_x}, \label{property 4} \\
\|If\|_{\dot{H}^2_x} &\lesssim N^{2-\gamma} \|f\|_{\dot{H}^\gamma_x}. \label{property 5}
\end{align}
\end{lem}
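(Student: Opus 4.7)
The plan is to derive the five estimates by combining the Mikhlin-H\"ormander multiplier theorem applied to the smooth non-increasing symbol $m_N$ with elementary pointwise comparisons between $m_N(\xi)\scal{\xi}^2$ and $\scal{\xi}^\gamma$, plus a Littlewood-Paley decomposition at frequency $N$ where needed. For \eqref{property 1}, I would note that $m_N$ is radial, smooth, identically $1$ on $|\xi|\leq N$, and proportional to $(N^{-1}|\xi|)^{\gamma-2}$ on $|\xi|\geq 2N$, hence bounded by $1$ with derivatives satisfying the scale-invariant bound $|\partial^\alpha m_N(\xi)|\lesssim |\xi|^{-|\alpha|}$ uniformly in $N$; the Mikhlin theorem then yields $L^q$-boundedness for $1<q<\infty$.

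For \eqref{property 2}, the same multiplier theorem applies to the composite symbol $a(\xi):=|\xi|^\sigma\tilde{\varphi}_{>N}(\xi)(|\xi|^2 m_N(\xi))^{-1}$, where $\tilde{\varphi}_{>N}$ is a smooth cutoff to $|\xi|>N$ coinciding with $1$ on the Fourier support of $P_{>N}$. The key pointwise bound to verify is
\[
\frac{|\xi|^{\sigma-2}}{m_N(\xi)}\lesssim N^{\sigma-2}\qquad\text{for }|\xi|>N,
\]
which on $|\xi|>2N$ follows from the identity $|\xi|^{\sigma-2}/m_N(\xi)=N^{\gamma-2}|\xi|^{\sigma-\gamma}$ combined with $\sigma\leq\gamma$ and $|\xi|\gtrsim N$; the transition annulus $N<|\xi|\leq 2N$ is trivial since $m_N\sim 1$ there. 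Routine differentiation confirms the Mikhlin derivative bounds for $a$ once the smoothness of $m_N$ is in hand.

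For \eqref{property 3}, I would decompose $f=P_{\leq N}f+P_{>N}f$. Because $I$ acts as the identity on $|\xi|\leq N$, $P_{\leq N}f=P_{\leq N}If$, and since the symbol $\scal{\xi}^\sigma/\scal{\xi}^2$ lies in the Mikhlin class for $\sigma\leq 2$, this gives $\|\scal{\nabla}^\sigma P_{\leq N}If\|_{L^q_x}\lesssim\|\scal{\Delta}If\|_{L^q_x}$. For the high piece, \eqref{property 2} applied with exponents $\sigma$ and $0$ yields
\[
\|\scal{\nabla}^\sigma P_{>N}f\|_{L^q_x}\lesssim\|P_{>N}f\|_{L^q_x}+\||\nabla|^\sigma P_{>N}f\|_{L^q_x}\lesssim(N^{-2}+N^{\sigma-2})\|\Delta If\|_{L^q_x}\lesssim\|\scal{\Delta}If\|_{L^q_x},
\]
using $N\geq 1$; summing the two pieces gives \eqref{property 3}.

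The Sobolev bounds \eqref{property 4} and \eqref{property 5} then reduce to pointwise symbol comparisons after Plancherel. The first half of \eqref{property 4} is \eqref{property 3} specialized to $q=2$, $\sigma=\gamma$. For the second half I would verify $m_N(\xi)\scal{\xi}^2\lesssim N^{2-\gamma}\scal{\xi}^\gamma$ uniformly in $\xi$: on $|\xi|\leq N$ this reduces to $\scal{\xi}^{2-\gamma}\lesssim N^{2-\gamma}$, and on $|\xi|>N$ it follows from $m_N(\xi)\scal{\xi}^2\sim N^{2-\gamma}\scal{\xi}^\gamma$; the homogeneous bound \eqref{property 5} is analogous, using $|\xi|^2=|\xi|^{2-\gamma}|\xi|^\gamma\leq N^{2-\gamma}|\xi|^\gamma$ on $|\xi|\leq N$ in place of the inhomogeneous version. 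The only step requiring real attention is the Mikhlin verification for $a$ in \eqref{property 2}; everything else is bookkeeping on Fourier supports.
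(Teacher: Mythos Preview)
The paper does not supply its own proof of this lemma; it simply cites \cite[Lemma~2.7]{Dinhmass}. Your argument via the Mikhlin--H\"ormander theorem and pointwise symbol comparison is the standard route and is essentially correct.

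One small slip: in your treatment of \eqref{property 3} you write ``$P_{\leq N}f=P_{\leq N}If$'' on the grounds that $I$ is the identity on $|\xi|\leq N$. With the paper's Littlewood--Paley convention, however, the Fourier support of $P_{\leq N}$ is $\{|\xi|\leq 2N\}$, and on the annulus $N<|\xi|\leq 2N$ the symbol $m_N$ is not identically~$1$. The fix is immediate: either run the decomposition at level $N/2$ (so that the low piece is genuinely supported where $m_N\equiv 1$, and \eqref{property 2} still applies to the high piece with the harmless loss of a constant), or simply observe that $m_N\sim 1$ on $|\xi|\leq 2N$ so that the low-frequency multiplier $\scal{\xi}^\sigma\varphi(N^{-1}\xi)\bigl(\scal{\xi}^2 m_N(\xi)\bigr)^{-1}$ is still uniformly in the Mikhlin class. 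Either way the conclusion stands; the rest of your proof is fine as written.
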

When the nonlinearity $F(u)$ is algebraic, one can use the Fourier transform to write the commutator like $F(Iu)-IF(u)$ as a product of Fourier transforms of $u$ and $Iu$, and then measure the frequency interactions. However, in our setting, the nonlinearity is no longer algebraic, we thus need the following rougher estimate which is a modified version of the Schr\"odinger context (see \cite{VisanZhang07}). 
\begin{lem}\label{lem rougher estimate}
Let $1<\gamma<2, 0<\delta <\gamma-1$ and $1<q, q_1, q_2 <\infty$ be such that $\frac{1}{q}=\frac{1}{q_1}+\frac{1}{q_2}$. Then
\begin{align}
\|I(fg)-(If)g\|_{L^q_x} \lesssim N^{-(2-\gamma+\delta)} \|If\|_{L^{q_1}_x} \|\scal{\nabla}^{2-\gamma+\delta} g\|_{L^{q_2}_x}.
\end{align}
\end{lem}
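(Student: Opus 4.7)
The plan is to pass to the Fourier side and exploit the structure of the multiplier $m$. Writing the commutator via Fourier inversion,
\[
\widehat{I(fg) - (If)g}(\xi) = \int [m(\xi) - m(\xi - \eta)]\, \widehat{f}(\xi - \eta)\, \widehat{g}(\eta)\, d\eta,
\]
one sees that the only nontrivial contribution comes from interactions in which either $f$ or $g$ has a Fourier mode above $\sim N$, since otherwise $m \equiv 1$ on the relevant supports and the integrand vanishes. I would Littlewood--Paley decompose $f = \sum_M P_M f$ and $g = \sum_L P_L g$ and estimate the dyadic pieces $I(P_M f \cdot P_L g) - (IP_M f) P_L g$, grouping them into two regimes depending on the relative sizes of $M$, $L$ and $N$.

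In the high-frequency-$g$ regime $L \gtrsim N$, the required gain $N^{-(2-\gamma+\delta)}$ is extracted via Bernstein's inequality, $\|P_L g\|_{L^{q_2}_x} \lesssim L^{-(2-\gamma+\delta)}\||\nabla|^{2-\gamma+\delta} P_L g\|_{L^{q_2}_x} \leq N^{-(2-\gamma+\delta)} \|\scal{\nabla}^{2-\gamma+\delta} g\|_{L^{q_2}_x}$, while the $f$-factor is controlled by $\|If\|_{L^{q_1}_x}$ after noting that $I$ acts essentially as the identity at frequencies $\lesssim N$, and that the commutator structure allows the factor $m(M)$ produced by the operator $I$ to be absorbed into $\|IP_M f\|_{L^{q_1}_x}$ in the low-high balance $M \lesssim L$.

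The main work is in the complementary regime $L \ll N$ and $M \gtrsim N$. The Fourier supports then force $|\xi - \eta| \gtrsim N$ and $|\eta| \ll N$, so I would invoke the mean value theorem together with the pointwise derivative bound $|\nabla m(\zeta)| \lesssim m(\zeta)/|\zeta|$ for $|\zeta| \gtrsim N$ (an immediate consequence of the explicit form $m(\zeta) = (|\zeta|/N)^{\gamma - 2}$ for $|\zeta| \geq 2N$ and smoothness on the transition annulus) to obtain
\[
|m(\xi) - m(\xi - \eta)| \lesssim \frac{|\eta|}{|\xi - \eta|}\, m(\xi - \eta).
\]
Combining $|\xi - \eta| \gtrsim N$ with $|\eta| \lesssim N$ and the hypothesis $\delta < \gamma - 1$ (so that $\gamma - 1 - \delta > 0$), I would recast the prefactor as $\lesssim N^{-(2-\gamma+\delta)}\,|\eta|^{2-\gamma+\delta}$, producing the symbol bound
\[
|m(\xi) - m(\xi - \eta)| \lesssim N^{-(2-\gamma+\delta)}\,|\eta|^{2-\gamma+\delta}\, m(\xi - \eta).
\]
The factor $m(\xi - \eta)$ then pairs naturally with $\widehat f(\xi - \eta)$ to form $\widehat{If}$, while $|\eta|^{2-\gamma+\delta}$ pairs with $\widehat g(\eta)$ to form $\widehat{|\nabla|^{2-\gamma+\delta} g}$.

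The main obstacle is to convert this pointwise symbol bound into the desired $L^q_x$ bilinear-operator estimate, since the dominating symbol $N^{-(2-\gamma+\delta)}\,|\eta|^{2-\gamma+\delta}\, m(\xi - \eta)$ is not a tensor product of one-variable symbols. This step requires a Coifman--Meyer-type bilinear multiplier argument, which in practice is executed through a further dyadic decomposition of the two factors, application of H\"older and Bernstein on each block, and summation via the Littlewood--Paley square function inequality; this is a direct adaptation to the fourth-order setting of the argument of Visan--Zhang \cite{VisanZhang07} for the Schr\"odinger equation.
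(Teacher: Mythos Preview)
Your sketch is correct and follows the standard Visan--Zhang commutator argument; the paper does not give its own proof of this lemma but simply cites \cite[Lemma~2.9]{Dinhmass}, whose proof proceeds exactly along the lines you describe (paraproduct decomposition, mean-value bound $|m(\xi)-m(\xi-\eta)|\lesssim |\eta|\,m(\xi-\eta)/|\xi-\eta|$ in the low-$g$/high-$f$ regime, and a Coifman--Meyer type multiplier estimate to close). One small imprecision: in the high-$g$ regime your remark that $m(M)$ is absorbed ``in the low-high balance $M\lesssim L$'' is backwards --- the direct absorption via output-frequency localization works when $M\gg L$, while for $M\sim L\gtrsim N$ one instead uses that the loss $(M/N)^{2-\gamma}$ from inverting $I$ on $P_Mf$ is offset by the gain $L^{-(2-\gamma+\delta)}\sim N^{-(2-\gamma)}M^{-\delta}$ coming from $P_Lg$.
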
  
We refer the reader to \cite[Lemma 2.9]{Dinhmass} for the proof of this result. A direct consequence of Lemma $\ref{lem rougher estimate}$ with the fact that
\[
\nabla F(u) = \nabla u F'(u)
\]
is the following commutator estimate.
\begin{coro} \label{coro rougher estimate}
Let $1<\gamma<2, 0<\delta<\gamma-1$ and $1<q, q_1, q_2<\infty$ be such that $\frac{1}{q}=\frac{1}{q_1}+\frac{1}{q_2}$. Then 
\begin{align}
\|\nabla I F(u)-(I\nabla u)F'(u)\|_{L^q_x} \lesssim N^{-(2-\gamma+\delta)} \|\nabla I u\|_{L^{q_1}_x} \|\scal{\nabla}^{2-\gamma+\delta} F'(u)\|_{L^{q_2}_x}. \label{rougher estimate} 
\end{align}
\end{coro}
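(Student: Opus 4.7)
The plan is to derive this commutator estimate as a direct specialization of Lemma \ref{lem rougher estimate}, using the chain rule identity $\nabla F(u) = F'(u)\nabla u$ recorded earlier in the preliminaries.

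First I would note that since $I$ is a Fourier multiplier, it commutes with $\nabla$, so $\nabla I F(u) = I \nabla F(u)$. Applying the chain rule $\nabla F(u) = (\nabla u) F'(u)$ then gives
\[
\nabla I F(u) = I\bigl((\nabla u)\, F'(u)\bigr).
\]
Subtracting $(I\nabla u)F'(u)$, the quantity to be estimated is exactly
\[
\nabla I F(u) - (I\nabla u)F'(u) = I(fg) - (If)\,g, \qquad f := \nabla u,\ \ g := F'(u).
\]

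Next I would invoke Lemma \ref{lem rougher estimate} with these choices of $f$ and $g$. The hypotheses $1<\gamma<2$, $0<\delta<\gamma-1$, and the H\"older relation $\frac{1}{q}=\frac{1}{q_1}+\frac{1}{q_2}$ with $1<q,q_1,q_2<\infty$ are inherited from the corollary. The lemma yields
\[
\bigl\|I(fg)-(If)g\bigr\|_{L^q_x} \lesssim N^{-(2-\gamma+\delta)}\,\|If\|_{L^{q_1}_x}\,\bigl\|\scal{\nabla}^{2-\gamma+\delta}\, g\bigr\|_{L^{q_2}_x}.
\]
Using once more that $I$ commutes with $\nabla$, we have $\|If\|_{L^{q_1}_x} = \|I\nabla u\|_{L^{q_1}_x} = \|\nabla I u\|_{L^{q_1}_x}$, and substituting $g = F'(u)$ produces the desired bound \eqref{rougher estimate}.

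There is essentially no obstacle here: the entire content of the estimate lies in Lemma \ref{lem rougher estimate}, and the passage to the stated form is just the chain rule plus commuting $I$ with a single derivative. The only point worth a brief verification is that the hypotheses of Lemma \ref{lem rougher estimate} are met by the exponents $(q,q_1,q_2)$ appearing in the corollary, which is immediate by assumption.
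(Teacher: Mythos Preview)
Your proof is correct and matches the paper's approach exactly: the paper simply states that the corollary is a direct consequence of Lemma~\ref{lem rougher estimate} together with the chain rule $\nabla F(u)=\nabla u\,F'(u)$, which is precisely what you carry out. The only omitted detail (in both your write-up and the paper) is that $F'(u)\nabla u$ is shorthand for $\partial_z F(u)\nabla u+\partial_{\bar z}F(u)\nabla\bar u$, so one applies the lemma to each of the two scalar products separately; this is routine since $I$ commutes with complex conjugation.
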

\section{Modified local well-posedness} \label{section modified local well-posedness}
\setcounter{equation}{0}
We firstly recall the local theory for (NL4S) in Sobolev spaces (see \cite{Dinhfract, Dinhfourt}). 
\begin{prop}[Local well-posedness in Sobolev spaces] \label{prop local well-posedness}
Let $5\leq d\leq 7, 0<\gamma<2$ and $u_0\in H^\gamma(\R^d)$. Then the equation \emph{(NL4S)} is locally well-posed on $[0,T_{\emph{lwp}}]$ with 
\[
T_{\emph{lwp}} \sim \|u_0\|^{-\frac{4}{\gamma}}_{H^\gamma_x}.
\]
Moreover, 
\[
\sup_{(a,b)\in B} \|u\|_{L^a_t([0,T_{\emph{lwp}}], W^{\gamma,b}_x)} \lesssim \|u_0\|_{H^\gamma_x}.
\]
The implicit constants depend only on the dimension $d$ and the regularity $\gamma$.
\end{prop}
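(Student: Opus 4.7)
The plan is a standard fixed-point argument applied to the Duhamel formulation
\[
\Phi(u)(t):=e^{it\Delta^2}u_0-i\int_0^t e^{i(t-s)\Delta^2}F(u)(s)\,ds,\qquad F(u):=|u|^{8/d}u.
\]
Fix a biharmonic admissible pair $(a,b)\in B$ and set
\[
\|u\|_{X_T}:=\|u\|_{L^\infty_t([0,T],H^\gamma_x)}+\|\scal{\nabla}^\gamma u\|_{L^a_t([0,T],L^b_x)}.
\]
I would work on the closed ball $X_T(R):=\{u:\|u\|_{X_T}\le R\}$ equipped with the weaker Strichartz distance $d(u,v):=\|u-v\|_{L^\infty_tL^2_x}+\|u-v\|_{L^a_tL^b_x}$, which makes it a complete metric space.

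Applying Corollary \ref{coro fourth order strichartz estimate} to $\Phi(u)$ reduces matters to bounding $\|\scal{\nabla}^\gamma F(u)\|_{L^{a'}_tL^{b'}_x}$. For $d=5,6,7$ one has $\nu:=1+8/d\in(2,3]$ and $|D^iF(z)|\lesssim|z|^{\nu-i}$ for $i=1,2$, so the fractional Leibniz rule (Lemma \ref{lem fractional leibniz rule}) applies for the full range $\gamma\in[0,2)$. Combined with Hölder and Sobolev embedding, this yields
\[
\|\scal{\nabla}^\gamma F(u)\|_{L^{b'}_x}\lesssim \|u\|_{L^{q_1}_x}^{8/d}\|\scal{\nabla}^\gamma u\|_{L^b_x}\lesssim \|\scal{\nabla}^\gamma u\|_{L^b_x}^{1+8/d},
\]
where $q_1$ is determined by Hölder ($\tfrac{1}{b'}=\tfrac{8/d}{q_1}+\tfrac{1}{b}$) and then controlled by $W^{\gamma,b}_x\hookrightarrow L^{q_1}_x$. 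Because $\gamma>0$ makes the problem subcritical, the time Hölder can be arranged to lose only a positive power $T^\theta$ with $\theta=\theta(d,\gamma)>0$, so that
\[
\|\scal{\nabla}^\gamma F(u)\|_{L^{a'}_tL^{b'}_x}\lesssim T^\theta\|\scal{\nabla}^\gamma u\|_{L^a_tL^b_x}^{1+8/d}.
\]
The analogous Lipschitz estimate follows from $|F(u)-F(v)|\lesssim(|u|^{8/d}+|v|^{8/d})|u-v|$, yielding $d(\Phi(u),\Phi(v))\le C_1 T^\theta R^{8/d}\,d(u,v)$.

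Combining these, $\|\Phi(u)\|_{X_T}\le C_0\|u_0\|_{H^\gamma_x}+C_1 T^\theta R^{1+8/d}$. Choosing $R=2C_0\|u_0\|_{H^\gamma_x}$ and $T$ so that $C_1 T^\theta R^{8/d}\le 1/2$ makes $\Phi$ a contraction on $X_T(R)$, and Banach's theorem delivers the unique fixed point; the contraction threshold gives a lower bound $T_{\text{lwp}}\gtrsim\|u_0\|_{H^\gamma_x}^{-(8/d)/\theta}$. The exponent $(8/d)/\theta$ is forced by the scaling symmetry $(\ref{scaling invariance})$ to equal $4/\gamma$, whence $T_{\text{lwp}}\sim\|u_0\|_{H^\gamma_x}^{-4/\gamma}$. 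The claimed Strichartz bound over all biharmonic admissible pairs then follows from one further application of Corollary \ref{coro fourth order strichartz estimate} to the fixed point. The main obstacle is the book-keeping needed to select $(a,b)\in B$ together with the Sobolev--Hölder exponents so that the embedding closes and $\theta>0$; this has to be done separately for $\gamma\in(0,1]$ (where the $C^1$ chain rule of Lemma \ref{lem fractional chain} provides a convenient alternative) and for $\gamma\in(1,2)$ (where the $C^2$ hypothesis of Lemma \ref{lem fractional leibniz rule} is what permits taking $\gamma$ derivatives of the non-polynomial nonlinearity). The regularity condition $(\ref{regularity condition locally well-posed})$ is comfortably satisfied throughout since $\lceil\gamma\rceil\le 2\le 1+8/d$ for $d=5,6,7$.
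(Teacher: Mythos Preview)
Your approach is essentially the same as the paper's: a contraction mapping argument in a Strichartz ball using Lemma~\ref{lem fractional leibniz rule} to control $\scal{\nabla}^\gamma F(u)$ together with H\"older and Sobolev embedding, picking up a subcritical power of $T$. The paper simply makes the book-keeping explicit by fixing the biharmonic pair $(p,q)=\bigl(\frac{2(d+4)}{d-2\gamma},\frac{2d(d+4)}{d^2+8\gamma}\bigr)$ and computing $\theta=\frac{2\gamma}{d}$ directly; in particular no case split on $\gamma\in(0,1]$ versus $\gamma\in(1,2)$ is needed, since Lemma~\ref{lem fractional leibniz rule} with $k=2$ already covers the full range $0<\gamma<2$.
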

\begin{proof}
Let us introduce
\[
p=\frac{2(d+4)}{d-2\gamma}, \quad q=\frac{2d(d+4)}{d^2+8\gamma}.
\]
It is easy to check that $(p,q)$ is biharmonic admissible. We next choose $(m,n)$ so that
\begin{align}
\frac{1}{p'}=\frac{\frac{8}{d}}{m} +\frac{1}{p}, \quad \frac{1}{q'}=\frac{\frac{8}{d}}{n} +\frac{1}{q}, \label{define m n}
\end{align}
or
\[
m=\frac{4(d+4)}{d(2+\gamma)}, \quad n=\frac{2(d+4)}{d-2\gamma}.
\]
With this choice of $n$, we have the Sobolev embedding $\dot{W}^{\gamma,q}_x \hookrightarrow L^n_x$. \newline
\indent Now, we consider
\[
X:=\Big\{ u \in L^p_t([0,T], W^{\gamma, q}_x) \ | \ \|u\|_{L^p_t([0,T], W^{\gamma,q}_x)} \leq M  \Big\}
\]
equipped with the distance
\[
d(u,v):=\|u-v\|_{L^p_t([0,T], L^q_x)},
\]
where $T, M>0$ to be chosen later. By Duhamel's formula, it suffices to prove that the functional 
\[
\Phi(u)(t):=e^{it\Delta^2} u_0 - i\int_0^t e^{i(t-s)\Delta^2} |u(s)|^{\nu-1} u(s) ds
\]
is a contraction on $(X,d)$. By Strichartz estimate $(\ref{strichartz estimate biharmonic})$,
\begin{align*}
\|\Phi(u)\|_{L^p_t([0,T], W^{\gamma, q}_x)} &\lesssim \|u_0\|_{H^\gamma_x} + \|F(u)\|_{L^{p'}_t([0,T], W^{\gamma, q'}_x)}, \\
\|\Phi(u)-\Phi(v)\|_{L^p_t([0,T], L^q_x)} &\lesssim \|F(u)-F(v)\|_{L^{p'}_t([0,T], L^{q'}_x)},
\end{align*}
where $F(u)=|u|^{\frac{8}{d}} u$ and similarly for $F(v)$. Using $(\ref{define m n})$, we apply Lemma $\ref{lem fractional leibniz rule}$ with $k=2, \gamma\in (0,2), \nu=1+\frac{8}{d}$ to have
\begin{align*}
\|F(u)\|_{W^{\gamma,q'}_x} \lesssim \|u\|^{\frac{8}{d}}_{L^n_x} \|u\|_{W^{\gamma,q}_x} \lesssim \|u\|^{\frac{8}{d}}_{\dot{W}^{\gamma,q}_x} \|u \|_{W^{\gamma,q}_x}.
\end{align*}
Note that $\nu\geq k$ since $5\leq d\leq 7$. Using again $(\ref{define m n})$, the H\"older inequality  and Sobolev embedding then imply
\[
\|F(u)\|_{L^{p'}_t([0,T], W^{\gamma,q'}_x)} \lesssim \|u \|^{\frac{8}{d}}_{L^m_t([0,T],\dot{W}^{\gamma,q}_x)} \|u\|_{L^p_t([0,T], W^{\gamma,q}_x)} 
\lesssim T^{\frac{2\gamma}{d}}\|u\|^{1+\frac{8}{d}}_{L^p_t([0,T], W^{\gamma,q}_x)}.
\]
Similarly, we have
\begin{align*}
\|F(u)-F(v)\|_{L^{p'}_t([0,T],L^{q'}_x)} &\lesssim T^{\frac{2\gamma}{d}} \Big(\|u\|^{\frac{8}{d}}_{L^p_t([0,T],\dot{W}^{\gamma,q}_x}+ \|v\|^{\frac{8}{d}}_{L^p_t([0,T],\dot{W}^{\gamma,q}_x}\Big)\|u-v\|_{L^p_t([0,T],L^q_x)} \\
&\lesssim T^{\frac{2\gamma}{d}} \Big(\|u\|^{\frac{8}{d}}_{L^p_t([0,T],W^{\gamma,q}_x}+ \|v\|^{\frac{8}{d}}_{L^p_t([0,T],W^{\gamma,q}_x}\Big)\|u-v\|_{L^p_t([0,T],L^q_x)}.
\end{align*}
This shows that for all $u, v \in X$, there exists $C>0$ independent of $T$ and $u_0 \in H^\gamma_x$ so that
\begin{align}
\|\Phi(u)\|_{L^p_t([0,T], W^{\gamma,q}_x)} &\lesssim C\|u_0\|_{H^\gamma_x} + C T^{\frac{2\gamma}{d}} M^{1+\frac{8}{d}}, \label{contraction estimate}\\
d(\Phi(u), \Phi(v)) &\lesssim CT^{\frac{2\gamma}{d}} M^{\frac{8}{d}} d(u,v). \nonumber
\end{align}
If we set $M=2C \|u_0\|_{H^\gamma_x}$ and choose 
\[
T\sim \|u_0\|_{H^\gamma_x}^{-\frac{4}{\gamma}},
\]
then $X$ is stable by $\Phi$ and $\Phi$ is a contraction on $(X,d)$. The fixed point argument proves the local existence. Moreover, by Strichartz estimate $(\ref{strichartz estimate biharmonic})$,
\[
\sup_{(a,b)\in B} \|u\|_{L^a_t([0,T], W^{\gamma,b}_x)} \lesssim \|u_0\|_{H^\gamma_x} +\|F(u)\|_{L^{p'}_t([0,T], W^{\gamma,q'}_x)} \lesssim \|u_0\|_{H^\gamma_x}.
\]
The proof is complete.
\end{proof}
\begin{coro}[Blowup criterion] \label{coro blowup criterion}
Let $5\leq d\leq 7, 0 <\gamma<2$ and $u_0\in H^\gamma(\R^d)$. Assume that the unique solution $u$ to \emph{(NL4S)} blows up at time $0<T^*<\infty$. Then,
\begin{align}
\|u(t)\|_{H^\gamma_x} \gtrsim (T^* -t)^{-\frac{\gamma}{4}}, \label{blowup criterion}
\end{align}
for all $0<t<T^*$.
\end{coro}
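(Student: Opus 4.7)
The plan is to use the local well-posedness statement in Proposition \ref{prop local well-posedness} in a contrapositive fashion, treating an arbitrary $t \in (0,T^*)$ as a new initial time. This is the standard way to convert a local existence time of the form $T_{\text{lwp}} \sim \|u_0\|^{-4/\gamma}_{H^\gamma_x}$ into a lower bound on the blowup rate.

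More concretely, fix $t \in (0,T^*)$ and set $v_0 := u(t)$. Since $u \in C([0,T^*), H^\gamma(\R^d))$, we have $v_0 \in H^\gamma(\R^d)$. Applying Proposition \ref{prop local well-posedness} to the initial data $v_0$, the Cauchy problem for (NL4S) starting at time $t$ is well-posed on an interval of length at least
\[
\tau \sim \|v_0\|_{H^\gamma_x}^{-\frac{4}{\gamma}} = \|u(t)\|_{H^\gamma_x}^{-\frac{4}{\gamma}},
\]
with an implicit constant depending only on $d$ and $\gamma$. By uniqueness, this local solution coincides on its domain with the maximal solution $u$.

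If we had $\tau > T^* - t$, then $u$ would extend past time $T^*$, contradicting the assumption that $T^*$ is the blowup time. Hence $\tau \leq T^* - t$, i.e.\
\[
\|u(t)\|_{H^\gamma_x}^{-\frac{4}{\gamma}} \lesssim T^* - t,
\]
which, after rearranging, yields the claimed bound $\|u(t)\|_{H^\gamma_x} \gtrsim (T^*-t)^{-\gamma/4}$ for every $t \in (0,T^*)$.

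There is no real obstacle here, since the proposition has already been established; the only point worth being careful about is that the implicit constant in $T_{\text{lwp}} \sim \|u_0\|^{-4/\gamma}_{H^\gamma_x}$ does not depend on the initial data, so the argument applies uniformly in $t$, yielding a single constant in the $\gtrsim$ of the conclusion.
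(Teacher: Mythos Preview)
Your argument is correct and is essentially the same as the paper's: both treat $u(t)$ as new initial data, invoke the local existence time $T_{\text{lwp}} \sim \|u(t)\|_{H^\gamma_x}^{-4/\gamma}$ from Proposition~\ref{prop local well-posedness}, and observe that this cannot exceed $T^*-t$. The paper phrases the contradiction via the contraction estimate $(\ref{contraction estimate})$ directly (following \cite{CazenaveWeissler}), but the substance is identical.
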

\begin{proof}
We follow the argument of \cite{CazenaveWeissler}. Let $0<t<T^*$. If we consider (NL4S) with initial data $u(t)$, then it follows from $(\ref{contraction estimate})$ the fixed point argument that if for some $M>0$ 
\[
C\|u(t)\|_{H^\gamma_x} + C(T-t)^{\frac{2\gamma}{d}} M^{1+\frac{8}{d}} \leq M,
\] 
then $T<T^*$. Thus, 
\[
C\|u(t)\|_{H^\gamma_x} + C(T^*-t)^{\frac{2\gamma}{d}} M^{1+\frac{8}{d}} > M,
\] 
for all $M>0$. Choosing $M=2C\|u(t)\|_{H^\gamma_x}$, we see that
\[
(T^* -t)^{\frac{2\gamma}{d}} \|u(t)\|^{\frac{8}{d}}_{H^\gamma_x} >C.
\]
This proves $(\ref{blowup criterion})$ and the proof is complete.
\end{proof}
We next define for any spacetime slab $J\times \R^d$, 
\begin{align}
Z_I(J):= \sup_{(p,q)\in B} \|\scal{\Delta} I u\|_{L^p_t(J, L^q_x)}. \nonumber
\end{align}
We have the following commutator estimates.
\begin{lem} \label{lem commutator estimate}
Let $5\leq d\leq 7, 1<\gamma<2, 0<\delta <\gamma-1$ and $J$ a compact interval. Then
\begin{align}
\|IF(u)\|_{L^2_t(J, L^{\frac{2d}{d+4}}_x)} &\lesssim |J|^{\frac{2\gamma}{d}} (Z_I(J))^{1+\frac{8}{d}}, \label{commutator estimate 1} \\
\|\nabla I F(u)-(I\nabla u) F'(u)\|_{L^2_t(J, L^{\frac{2d}{d+2}}_x)} &\lesssim N^{-(2-\gamma+\delta)} (Z_I(J))^{1+\frac{8}{d}}, \label{commutator estimate 2} \\
\|\nabla I F(u)\|_{L^2_t(J, L^{\frac{2d}{d+2}}_x)} &\lesssim  |J|^{\frac{2\gamma}{d}} (Z_I(J))^{1+\frac{8}{d}} + N^{-(2-\gamma+\delta)}(Z_I(J))^{1+\frac{8}{d}}, \label{commutator estimate 3} \\
\|\nabla I F(u)\|_{L^2_t(J, L^{\frac{2d}{d+4}}_x)} &\lesssim (Z_I(J))^{1+\frac{8}{d}}. \label{commutator estimate 4}
\end{align}
\end{lem}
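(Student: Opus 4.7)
My plan is to reduce all four estimates to controlling biharmonic-admissible Strichartz norms of $\scal{\Delta} I u$, which are precisely the ones defining $Z_I(J)$. The common strategy is H\"older in space and time, handling the nonlinearity via the fractional Leibniz/chain rules of Section~2 (Lemmas 2.9 and 2.10) or the commutator estimate of Corollary 2.12, and then bounding each resulting factor by $\|\scal{\Delta} I u\|_{L^p_t L^q_x}$ on a biharmonic-admissible pair via properties $(\ref{property 1})$ and $(\ref{property 3})$ of Lemma 2.8 together with Sobolev embedding. I choose the biharmonic-admissible pair $(p,q)$ exactly as in the proof of Proposition 3.1, namely $p = \frac{2(d+4)}{d-2\gamma}$, $q = \frac{2d(d+4)}{d^2+8\gamma}$, with auxiliary exponents $m = \frac{4(d+4)}{d(2+\gamma)}$ and $n = \frac{2(d+4)}{d-2\gamma}$ satisfying the identities (3.1), so that $\dot{W}^{\gamma,q}_x \hookrightarrow L^n_x$.

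For (\ref{commutator estimate 1}), I first strip the outer $I$ using $(\ref{property 1})$, then split $F(u)=|u|^{8/d}u$ by H\"older in space via $\frac{d+4}{2d} = \frac{8/d}{n}+\frac{1}{q}$ and in time via $\frac{1}{2} = \frac{8/d}{m}+\frac{1}{p}$, exactly as in Proposition 3.1. The factor $|u|^{8/d}$ lands in $L^m_t L^n_x$, and one more H\"older in time against the full interval $J$ produces the prefactor $|J|^{2\gamma/d}$; the remaining $\|u\|_{L^p_t L^q_x}$ is controlled by $Z_I(J)$ via $(\ref{property 3})$ and Sobolev embedding. For (\ref{commutator estimate 4}), I write $\nabla I F(u)=I(F'(u)\nabla u)$ since $I$ is a Fourier multiplier, strip the $I$ by $(\ref{property 1})$, and apply H\"older so that both factors land directly in biharmonic-admissible spaces with no leftover $L^\infty_t$, which is why no $|J|$ factor appears in that estimate.

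For (\ref{commutator estimate 2}), I apply Corollary 2.12 with $q=\frac{2d}{d+2}$ and a H\"older decomposition $\frac{1}{q}=\frac{1}{q_1}+\frac{1}{q_2}$ placing $\|\nabla I u\|_{L^{q_1}_x}$ inside a biharmonic pair. The serious term $\|\scal{\nabla}^{2-\gamma+\delta} F'(u)\|_{L^{q_2}_x}$ is handled by noting that $F'(u)=c|u|^{8/d}$ with $8/d>1$ for $d\in\{5,6,7\}$, so Lemma 2.8 (fractional chain rule for $C^1$ functions) applies with $\alpha = 2-\gamma+\delta \in (0,1)$, reducing the estimate to a product of the form $\||u|^{8/d-1}\|_{L^{r_1}_x}\||\nabla|^\alpha u\|_{L^{r_2}_x}$, which is then absorbed into $Z_I(J)^{8/d}$ by Sobolev embedding and $(\ref{property 3})$. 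Estimate (\ref{commutator estimate 3}) then follows from the triangle inequality by combining (\ref{commutator estimate 2}) with a direct H\"older bound on $\|(I\nabla u) F'(u)\|_{L^2_t L^{2d/(d+2)}_x}$, the latter mirroring the H\"older arrangement used in (\ref{commutator estimate 1}) and producing the $|J|^{2\gamma/d}$ term.

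The main obstacle is (\ref{commutator estimate 2}): one must choose the exponents $q_1, q_2$ and the auxiliary Sobolev/Strichartz exponents so that Corollary 2.12, the fractional chain rule, and the Sobolev embeddings are all simultaneously admissible in the regime $d\in\{5,6,7\}$, $1<\gamma<2$, $0<\delta<\gamma-1$. The second issue, inherent to the non-algebraic nonlinearity, is that the Fourier-multilinear approach available when $d=4$ is not usable here, which forces the use of the $C^1$ chain rule to control $\|\scal{\nabla}^{2-\gamma+\delta}|u|^{8/d}\|_{L^{q_2}_x}$. Once (\ref{commutator estimate 2}) is established, the remaining estimates are direct H\"older computations of the same flavor as in Proposition 3.1.
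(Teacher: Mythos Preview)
Your overall strategy matches the paper's: for $(\ref{commutator estimate 1})$ strip $I$ via $(\ref{property 1})$ and use H\"older plus Sobolev to land on a biharmonic pair; for $(\ref{commutator estimate 3})$ split by the triangle inequality into $(\ref{commutator estimate 2})$ and a direct H\"older bound on $(I\nabla u)F'(u)$; for $(\ref{commutator estimate 2})$ apply Corollary~\ref{coro rougher estimate} and then the $C^1$ fractional chain rule on $F'$. The paper only writes out $(\ref{commutator estimate 1})$ and $(\ref{commutator estimate 3})$, delegating $(\ref{commutator estimate 2})$ and $(\ref{commutator estimate 4})$ to \cite{Dinhmass}, so on those two you are supplying strictly more detail along the same lines.

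However, the specific H\"older identity you assert for $(\ref{commutator estimate 1})$,
\[
\frac{d+4}{2d}=\frac{8/d}{n}+\frac{1}{q},
\]
is \emph{false} with the Proposition~\ref{prop local well-posedness} exponents $n=\frac{2(d+4)}{d-2\gamma}$, $q=\frac{2d(d+4)}{d^2+8\gamma}$: a direct computation gives $\frac{8/d}{n}+\frac{1}{q}=\frac{d^2+8d-8\gamma}{2d(d+4)}=\frac{1}{q'}$, whereas $\frac{d+4}{2d}=\frac{d^2+8d+16}{2d(d+4)}$. The pair $(p,q,m,n)$ of Proposition~\ref{prop local well-posedness} is tailored to the dual target $L^{p'}_tL^{q'}_x$, not to the target $L^2_tL^{2d/(d+4)}_x$ of $(\ref{commutator estimate 1})$ (nor to $L^2_tL^{2d/(d+2)}_x$ in $(\ref{commutator estimate 3})$), so the splitting you wrote does not close. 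The paper works instead with the biharmonic-admissible pair $\Big(\frac{2(d+8)}{d-4\gamma},\frac{2d(d+8)}{d^2+4d+16\gamma}\Big)$, which is precisely adapted to these targets; once you switch to the corresponding $(d+8)$-based exponents, every remaining step of your argument goes through verbatim.
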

\begin{proof} 
We firstly note that the estimates $(\ref{commutator estimate 2})$ and $(\ref{commutator estimate 4})$ are given in \cite[Lemma 3.1]{Dinhmass}. Let us consider $(\ref{commutator estimate 1})$. By $(\ref{property 1})$ and H\"older's inequality,
\[
\|IF(u)\|_{L^2_t(J, L^{\frac{2d}{d+4}}_x)} \lesssim \|F(u)\|_{L^2_t(J, L^{\frac{2d}{d+4}}_x)} \lesssim \|u\|_{L^{\frac{2(d+8)}{d-4\gamma}}_t(J, L^{\frac{2d(d+8)}{d^2+4d+16\gamma}}_x)} \|F'(u)\|_{L^{\frac{d+8}{2(2+\gamma)}}_t(J, L^{\frac{d(d+8)}{4d+16-8\gamma}}_x)}. 
\]
Since $F'(u)=O(|u|^{\frac{8}{d}})$, the Sobolev embedding implies
\begin{align*}
\|IF(u)\|_{L^2_t(J, L^{\frac{2d}{d+4}}_x)} &\lesssim \|u\|_{L^{\frac{2(d+8)}{d-4\gamma}}_t(J, L^{\frac{2d(d+8)}{d^2+4d+16\gamma}}_x)} \|u\|^{\frac{8}{d}}_{L^{\frac{4(d+8)}{d(2+\gamma)}}_t(J, L^{\frac{2(d+8)}{d+4-2\gamma}}_x)} \\
&\lesssim |J|^{\frac{2\gamma}{d}} \|u\|_{L^{\frac{2(d+8)}{d-4\gamma}}_t(J, L^{\frac{2d(d+8)}{d^2+4d+16\gamma}}_x)} \|u\|^{\frac{8}{d}}_{L^{\frac{2(d+8)}{d-4\gamma}}_t(J, L^{\frac{2(d+8)}{d+4-2\gamma}}_x)} \\
&\lesssim |J|^{\frac{2\gamma}{d}} \|u\|_{L^{\frac{2(d+8)}{d-4\gamma}}_t(J, L^{\frac{2d(d+8)}{d^2+4d+16\gamma}}_x)} \||\nabla|^\gamma u\|_{L^{\frac{2(d+8)}{d-4\gamma}}_t(J,L^{\frac{2d(d+8)}{d^2+4d+16\gamma}}_x)} \\
&\lesssim |J|^{\frac{2\gamma}{d}} \|\scal{\nabla}^\gamma u\|^{1+\frac{8}{d}}_{L^{\frac{2(d+8)}{d-4\gamma}}_t(J,L^{\frac{2d(d+8)}{d^2+4d+16\gamma}}_x)} \\
&\lesssim |J|^{\frac{2\gamma}{d}} (Z_I(J))^{1+\frac{8}{d}}.
\end{align*}
Here we use $(\ref{property 3})$ and the fact $\left( \frac{2(d+8)}{d-4\gamma}, \frac{2d(d+8)}{d^2+4d+16\gamma}\right)$ is biharmonic admissible to get the last estimate. \newline
\indent It remains to prove $(\ref{commutator estimate 3})$. We have from $(\ref{commutator estimate 2})$ and the triangle inequality that
\begin{align}
\|\nabla I F(u)\|_{L^2_t(J, L^{\frac{2d}{d+2}}_x)} \lesssim \|(\nabla I u) F'(u)\|_{L^2_t(J, L^{\frac{2d}{d+2}}_x)} + N^{-(2-\gamma+\delta)} (Z_I(J))^{1+\frac{8}{d}}. \label{estimate 1}
\end{align}
By H\"older's inequality,
\begin{align}
 \|(\nabla I u) F'(u)\|_{L^2_t(J, L^{\frac{2d}{d+2}}_x)} \lesssim \|\nabla I u\|_{L^{\frac{2(d+8)}{d-4\gamma}}_t(J, L^{\frac{2d(d+8)}{d^2+2d+16(\gamma-1)}}_x)} \|F'(u)\|_{L^{\frac{d+8}{2(2+\gamma)}}_t(J, L^{\frac{d(d+8)}{4d+16-8\gamma}}_x)}. \label{estimate 2}
\end{align}
We use the Sobolev embedding to estimate
\begin{align}
\|\nabla I u\|_{L^{\frac{2(d+8)}{d-4\gamma}}_t(J, L^{\frac{2d(d+8)}{d^2+2d+16(\gamma-1)}}_x)} \lesssim \|\Delta I u\|_{L^{\frac{2(d+8)}{d-4\gamma}}_t(J, L^{\frac{2d(d+8)}{d^2+4d+16\gamma}}_x)} \lesssim Z_I(J). \label{estimate 3}
\end{align}
Here $\Big(\frac{2(d+8)}{d-4\gamma}, \frac{2d(d+8)}{d^2+4d+16\gamma} \Big)$ is biharmonic admissible. Since $F'(u)=O(|u|^{\frac{8}{d}})$, the Sobolev embedding again gives
\begin{align}
\|F'(u)\|_{L^{\frac{d+8}{2(2+\gamma)}}_t(J, L^{\frac{d(d+8)}{4d+16-8\gamma}}_x)} &\lesssim \|u\|^{\frac{8}{d}}_{L^{\frac{4(d+8)}{d(2+\gamma)}}_t(J, L^{\frac{2(d+8)}{d+4-2\gamma}}_x)} \nonumber \\
&\lesssim |J|^{\frac{2\gamma}{d}} \|u\|^{\frac{8}{d}}_{L^{\frac{2(d+8)}{d-4\gamma}}_t(J, L^{\frac{2(d+8)}{d+4-2\gamma}}_x)} \nonumber \\
&\lesssim |J|^{\frac{2\gamma}{d}} \||\nabla|^\gamma u\|^{\frac{8}{d}}_{L^{\frac{2(d+8)}{d-4\gamma}}_t(J, L^{\frac{2d(d+8)}{d^2+4d+16\gamma}}_x)} \nonumber \\
&\lesssim |J|^{\frac{2\gamma}{d}} (Z_I(J))^{\frac{8}{d}}. \label{estimate 4}
\end{align}
Collecting $(\ref{estimate 1}) - (\ref{estimate 4})$, we obtain $(\ref{commutator estimate 3})$. The proof is complete.
\end{proof}
\begin{prop}[Modified local well-posedness] \label{prop modified local well-posedness}
Let $5\leq d \leq 7, 1<\gamma<2, 0<\delta<\gamma-1$ and $u_0 \in H^\gamma(\R^d)$. Let 
\[
\widetilde{T}_{\emph{lwp}}:= c \|I u_0\|_{H^2_x}^{-\frac{4}{\gamma}},
\]
for a small constant $c=c(d,\gamma)>0$. Then \emph{(NL4S)} is locally well-posed on $[0,\widetilde{T}_{\emph{lwp}}]$. Moreover, for $N$ sufficiently large,
\begin{align}
Z_I([0,\widetilde{T}_{\emph{lwp}}]) \lesssim \|I u_0\|_{H^2_x}. \label{modified local estimate}
\end{align}
\end{prop}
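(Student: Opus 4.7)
The plan is to run a Banach fixed-point argument tailored to the $I$-operator: rather than measuring the solution in $H^\gamma$ as in Proposition \ref{prop local well-posedness}, we measure $Iu$ in the modified Strichartz quantity $Z_I$, exploiting the fact that $I$ commutes with the linear propagator $e^{it\Delta^2}$. The target ball will be
\[
X := \Big\{ u : [0,T] \to H^\gamma(\R^d) \ | \ Z_I([0,T]) \leq M \Big\}
\]
with $M = 2C_0 \|Iu_0\|_{H^2_x}$ for a suitable Strichartz constant $C_0$, and with distance taken at a single biharmonic-admissible pair, say $d(u,v) := \|I(u-v)\|_{L^p_t([0,T],L^q_x)}$.

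First I would apply $I$ to Duhamel's formula,
\[
Iu(t) = e^{it\Delta^2} Iu_0 - i \int_0^t e^{i(t-s)\Delta^2} IF(u)(s)\,ds,
\]
and bound $Iu$ via the Strichartz estimate \eqref{strichartz estimate biharmonic} with $\gamma = 0$ and source pair $(a,b) = (2, \tfrac{2d}{d-4})$, which is biharmonic admissible for $5 \leq d \leq 7$; and bound $\Delta Iu$ via the special estimate \eqref{strichartz estimate biharmonic fourth-order}. Summing and taking the supremum over all biharmonic-admissible $(p,q)$ yields
\[
Z_I([0,T]) \leq C_0 \|Iu_0\|_{H^2_x} + C_0 \Bigl( \|IF(u)\|_{L^2_t L^{\frac{2d}{d+4}}_x} + \|\nabla IF(u)\|_{L^2_t L^{\frac{2d}{d+2}}_x} \Bigr).
\]
The commutator estimates \eqref{commutator estimate 1} and \eqref{commutator estimate 3} of Lemma \ref{lem commutator estimate} then give the master bootstrap inequality
\[
Z_I([0,T]) \leq C_0 \|Iu_0\|_{H^2_x} + C_1 \bigl( T^{\frac{2\gamma}{d}} + N^{-(2-\gamma+\delta)} \bigr) \bigl(Z_I([0,T])\bigr)^{1+\frac{8}{d}}.
\]

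To close the argument I would first fix $N$ large enough that $C_1 N^{-(2-\gamma+\delta)} M^{\frac{8}{d}} \leq \tfrac{1}{4}$, then choose $T = c\|Iu_0\|_{H^2_x}^{-\frac{4}{\gamma}}$ with $c = c(d,\gamma)$ small enough that $C_1 T^{\frac{2\gamma}{d}} M^{\frac{8}{d}} \leq \tfrac{1}{4}$ as well. A standard continuity/bootstrap argument, applied to the solution produced by Proposition \ref{prop local well-posedness} (for which $T \mapsto Z_I([0,T])$ is continuous and starts just above $\|Iu_0\|_{H^2_x} \leq M/2$), then forces $Z_I([0,\widetilde{T}_{\text{lwp}}]) \leq M$, which is exactly \eqref{modified local estimate}. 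The contraction step proceeds analogously by inserting $F(u) - F(v)$ into the same Strichartz scheme and using the pointwise bound $|F(u)-F(v)| \lesssim (|u|^{\frac{8}{d}} + |v|^{\frac{8}{d}})|u-v|$ together with H\"older's inequality; because the derivative of $F$ is only H\"older of exponent $\min(8/d,1)$, one tracks differences in the weaker metric $d$ rather than in $Z_I$ itself. The principal delicate point is the order of selection of the two small parameters $T^{\frac{2\gamma}{d}}$ and $N^{-(2-\gamma+\delta)}$: the threshold $N$ must be chosen first (as a function of $\|Iu_0\|_{H^2_x}$ via $M$), and only then is $T$ fixed, so that the stated scaling $\widetilde{T}_{\text{lwp}} \sim \|Iu_0\|_{H^2_x}^{-4/\gamma}$ holds with constant $c$ depending only on $d$ and $\gamma$.
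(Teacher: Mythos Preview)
Your proposal is correct and follows essentially the same route as the paper: apply $I$ to Duhamel, control $Z_I$ via the Strichartz estimates \eqref{strichartz estimate biharmonic} and \eqref{strichartz estimate biharmonic fourth-order}, feed in the commutator bounds \eqref{commutator estimate 1} and \eqref{commutator estimate 3}, and close by a continuity argument with $|J|$ small and $N$ large. The paper skips the contraction step entirely (existence on $[0,\widetilde{T}_{\text{lwp}}]$ is inherited directly from Proposition~\ref{prop local well-posedness} via $\|u_0\|_{H^\gamma_x}\lesssim\|Iu_0\|_{H^2_x}$), and your worry about the order of choosing $N$ and $T$ is unnecessary since the two smallness constraints $C_1 T^{2\gamma/d}M^{8/d}\le\tfrac14$ and $C_1 N^{-(2-\gamma+\delta)}M^{8/d}\le\tfrac14$ are decoupled.
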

\begin{proof}
By $(\ref{property 4})$, $\|u_0\|_{H^\gamma_x}\lesssim \|Iu_0\|_{H^2_x}$. Thus, 
\[
\widetilde{T}_{\text{lwp}} = c\|I u_0\|_{H^2_x}^{-\frac{4}{\gamma}} \lesssim c \|u_0\|_{H^\gamma_x}^{-\frac{4}{\gamma}} \leq T_{\text{lwp}},
\]
provided $c$ is small enough. Here $T_{\text{lwp}}$ is as in Proposition $\ref{prop local well-posedness}$. This shows that (NL4S) is locally well-posed on $[0,\widetilde{T}_{\text{lwp}}]$. It remains to prove $(\ref{modified local estimate})$. Denote $J=[0,\widetilde{T}_{\text{lwp}}]$. By Strichartz estimates $(\ref{strichartz estimate biharmonic})$ and $(\ref{strichartz estimate biharmonic fourth-order})$, 
\begin{align*}
Z_I(J) &\lesssim \sup_{(p,q)\in B} \|Iu\|_{L^p_t(J, L^q_x)} + \sup_{(p,q)\in B} \|\Delta I u \|_{L^p_t(J, L^q_x)} \\
&\lesssim \|I u_0\|_{L^2_x} + \|IF(u)\|_{L^2_t(J, L^{\frac{2d}{d+4}}_x)} + \|\Delta I u_0\|_{L^2_x} + \|\nabla IF(u)\|_{L^2_t(J, L^{\frac{2d}{d+2}}_x)} \\
&\lesssim \|Iu_0\|_{H^2_x} + \|IF(u)\|_{L^2_t(J, L^{\frac{2d}{d+4}}_x)} + \|\nabla I F(u)\|_{L^2_t(J,L^{\frac{2d}{d+2}}_x)}.
\end{align*}
We next use $(\ref{commutator estimate 1})$ and $(\ref{commutator estimate 3})$ to have
\[
Z_I(J)\lesssim \|I u_0\|_{H^2_x} + \Big(|J|^{\frac{2\gamma}{d}} + N^{-(2-\gamma+\delta)}\Big) (Z_I(J))^{1+\frac{8}{d}}.
\]
By taking $c=c(d,\gamma)$ small enough (or $|J|$ is small) and $N$ large enough, the continuity argument shows $(\ref{modified local estimate})$. The proof is complete. 
\end{proof}
\section{Modified energy increment} \label{section modified energy increment}
\setcounter{equation}{0}
In this section, we will derive two types of the modified energy increment. The first one is to show that the modified energy of $u$, namely $E(Iu)$ grows much slower than the modified kinetic of $u$, namely $\|\Delta Iu\|^2_{L^2_x}$. It is crucial to prove the limiting profile for blowup solutions given in Theorem $\ref{theorem weak limiting profile}$. The second one is the ``almost'' conservation law for initial data whose mass is smaller than mass of the solution to the ground state equation $(\ref{ground state equation})$. With the help of this ``almost'' conservation law, we are able to prove the global well-posedness given in Theorem $\ref{theorem global existence below ground state}$.
\begin{lem}[Local increment of the modified energy] \label{lem local increment}
Let $5\leq d\leq 7, \max\{3-\frac{8}{d}, \frac{8}{d} \} <\gamma<2, 0<\delta <\gamma+\frac{8}{d}-3$ and $u_0 \in H^\gamma(\R^d)$. Let 
\[
\widetilde{T}_{\emph{lwp}}:=c\|Iu_0\|_{H^2_x}^{-\frac{4}{\gamma}},
\]
for some small constant $c=c(d,\gamma)>0$. Then, for $N$ sufficiently large,
\begin{align}
\sup_{t\in [0,\widetilde{T}_{\emph{lwp}}]} |E(I u(t))-E(Iu_0)| \lesssim N^{-(2-\gamma+\delta)} \Big( \|Iu_0\|_{H^2_x}^{2+\frac{8}{d}} + \|Iu_0\|_{H^2_x}^{2+\frac{16}{d}} \Big). \label{local energy increment}
\end{align}
Here the implicit constant depends only on $\gamma$ and $\|u_0\|_{H^\gamma_x}$.
\end{lem}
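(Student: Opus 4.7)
The plan is to derive an identity for $\frac{d}{dt}E(Iu)$ showing that the right-hand side vanishes but for the obstruction $IF(u)-F(Iu)$, then to integrate in time and estimate using the commutator bounds of Lemma~\ref{lem commutator estimate} and the bound $Z_I(J)\lesssim \|Iu_0\|_{H^2_x}$ from Proposition~\ref{prop modified local well-posedness}. Differentiating $E(Iu)$, substituting $\partial_t Iu = i\Delta^2 Iu - iIF(u)$ from the equation satisfied by $Iu$ (and noting that $|\partial_t Iu|^2$ is real, so its contribution to the imaginary part drops out), yields the identity
\begin{align*}
\frac{d}{dt}E(Iu) = \operatorname{Im}\int \bigl[\Delta^2 \overline{Iu} - \overline{F(Iu)}\bigr]\bigl[IF(u) - F(Iu)\bigr]\,dx,
\end{align*}
which splits naturally into
\begin{align*}
(\mathrm{I}) &:= \operatorname{Im}\int \Delta^2 \overline{Iu}\,[IF(u) - F(Iu)]\,dx,\\
(\mathrm{II}) &:= -\operatorname{Im}\int \overline{F(Iu)}\,[IF(u) - F(Iu)]\,dx.
\end{align*}

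For $(\mathrm{I})$, one integration by parts gives $-\operatorname{Im}\int \nabla\Delta\overline{Iu}\cdot\nabla[IF(u)-F(Iu)]\,dx$, and I write
\begin{align*}
\nabla[IF(u)-F(Iu)] = [\nabla IF(u) - F'(u)\nabla Iu] + [F'(u) - F'(Iu)]\nabla Iu.
\end{align*}
The first bracket is controlled in $L^2_t L^{2d/(d+2)}_x$ directly by estimate (\ref{commutator estimate 2}), yielding the gain $N^{-(2-\gamma+\delta)}(Z_I(J))^{1+8/d}$. For the second bracket I use the pointwise bound $|F'(u)-F'(Iu)|\lesssim |u-Iu|\bigl(|u|^{8/d-1}+|Iu|^{8/d-1}\bigr)$, valid since $8/d>1$ on $d\in\{5,6,7\}$, then apply the smoothing estimate (\ref{property 2}) of Lemma~\ref{lem properties I operator} to $u - Iu = (\mathrm{Id}-I)u$ to extract a second factor of $N^{-(2-\gamma+\delta)}$ against $\|\Delta Iu\|$-type norms. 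The pairing is then closed by controlling $\|\nabla\Delta Iu\|_{L^2_t L^{2d/(d-2)}_x}$ via the generalized Strichartz estimate of Proposition~\ref{prop generalized strichartz estimate} at the Schr\"odinger-admissible pair $(2,2d/(d-2))$ (for which $\gamma_{p,q}=-1$): the linear piece is $\|\Delta Iu_0\|_{L^2_x}$ and the Duhamel piece reduces to $\|\nabla IF(u)\|_{L^2_t L^{2d/(d+2)}_x}$, which is bounded by (\ref{commutator estimate 3}) and Proposition~\ref{prop modified local well-posedness}. This produces the contribution $N^{-(2-\gamma+\delta)}\|Iu_0\|_{H^2_x}^{2+8/d}$.

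For $(\mathrm{II})$, H\"older's inequality is applied directly. The factor $\|F(Iu)\|_{L^2_t L^{2d/(d+4)}_x}$ is bounded by $(Z_I(J))^{1+8/d}$ using $|F(Iu)|=|Iu|^{1+8/d}$, Sobolev embedding and the fact that $(2(d+8)/d,2(d+8)/(d+4))$ is biharmonic admissible. The dual factor $\|IF(u)-F(Iu)\|_{L^2_t L^{2d/(d-4)}_x}$ is estimated by the same decomposition used in $(\mathrm{I})$, after writing $IF(u)-F(Iu)=(I-\mathrm{Id})F(u)+[F(u)-F(Iu)]$ and treating $(I-\mathrm{Id})F(u)$ via the smoothing of $I$ and $F(u)-F(Iu)$ via the pointwise H\"older bound $|F(u)-F(Iu)|\lesssim |u-Iu|(|u|^{8/d}+|Iu|^{8/d})$. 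This yields the gain $N^{-(2-\gamma+\delta)}(Z_I(J))^{1+8/d}$ for the dual factor, so that $(\mathrm{II})$ contributes $N^{-(2-\gamma+\delta)}\|Iu_0\|_{H^2_x}^{2+16/d}$.

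Integrating the identity over $[0,\widetilde T_{\mathrm{lwp}}]$, collecting the bounds on $(\mathrm{I})$ and $(\mathrm{II})$, and invoking Proposition~\ref{prop modified local well-posedness} to replace $Z_I([0,\widetilde T_{\mathrm{lwp}}])$ by $\|Iu_0\|_{H^2_x}$ then yields (\ref{local energy increment}). The hardest step is the non-commutator piece $[F'(u)-F'(Iu)]\nabla Iu$ (and its analogue in $(\mathrm{II})$): since $F$ is not algebraic when $d = 5, 7$, the Fourier-splitting method used in \cite{ZhuYangZhang11} for the $d=4$ case is unavailable, and one is forced to rely on the pointwise H\"older bound combined with the Bernstein-type smoothing of $\mathrm{Id}-I$. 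This is precisely what produces the stronger hypotheses $\gamma > 3 - 8/d$ and $0<\delta<\gamma+8/d-3$ appearing in the lemma, which are strictly sharper than the condition $\delta<\gamma-1$ demanded by Corollary~\ref{coro rougher estimate} alone.
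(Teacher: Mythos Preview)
Your derivation of the energy identity and the overall architecture match the paper's. The treatment of $(\mathrm{I})$, however, is genuinely different: the paper integrates by parts \emph{twice}, obtaining $\int\overline{\Delta Iu}\,\Delta[IF(u)-F(Iu)]$ and then expanding $\Delta[\cdot]$ via the chain rule into five pieces involving $F'$ and $F''$; you integrate by parts once and pair $\nabla\Delta Iu$ against $\nabla[IF(u)-F(Iu)]$. Your route is shorter (no $F''$ ever appears) but requires the third-order Strichartz bound $\|\nabla\Delta Iu\|_{L^2_tL^{2d/(d-2)}_x}\lesssim\|Iu_0\|_{H^2_x}$, which is indeed available from Proposition~\ref{prop generalized strichartz estimate} together with $(\ref{commutator estimate 3})$ as you indicate. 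Both decompositions of $\nabla[IF(u)-F(Iu)]$ and their estimates (via $(\ref{commutator estimate 2})$ and the pointwise bound on $F'(u)-F'(Iu)$) are fine.

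There is a real gap in $(\mathrm{II})$. Your H\"older split places $IF(u)-F(Iu)$ in $L^2_tL^{2d/(d-4)}_x$, but the decomposition $(I-\mathrm{Id})F(u)+[F(u)-F(Iu)]$ cannot be closed there against $Z_I(J)$ with the required $N$-gain: a scaling check shows that matching the H\"older exponents to biharmonic pairs forces a Sobolev embedding of order $\sigma=d/2>\gamma$ on $u$, which $(\ref{property 3})$ does not supply. The paper avoids this by writing
\[
|(\mathrm{II})|\lesssim \big\||\nabla|^{-1}IF(u)\big\|_{L^2_tL^{2d/(d-2)}_x}\,\big\|\nabla[IF(u)-F(Iu)]\big\|_{L^2_tL^{2d/(d+2)}_x},
\]
and then the second factor is \emph{exactly} the quantity already bounded in your treatment of $(\mathrm{I})$, while the first is controlled by $(\ref{commutator estimate 4})$ after Sobolev. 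Adopting this split repairs $(\mathrm{II})$ immediately.

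Finally, your last paragraph misattributes the hypothesis $\gamma>3-\tfrac{8}{d}$ (equivalently $2-\gamma+\delta<\tfrac{8}{d}-1$). In the paper this constraint enters only through Lemma~\ref{lem fractional chain rule holder} applied to $|\nabla|^{\alpha}F''(u)$, which appears because of the \emph{second} integration by parts. In your single-integration-by-parts scheme $F''$ never arises, and the only constraint your steps actually use is $0<\delta<\gamma-1$ from Corollary~\ref{coro rougher estimate}; so the sharper hypothesis is a feature of the paper's argument, not of the ``non-commutator piece'' you point to.
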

\begin{proof}
By Proposition $\ref{prop modified local well-posedness}$, the equation (NL4S) is locally well-posed on $[0,\widetilde{T}_{\text{lwp}}]$ and the unique solution $u$ satisfies
\begin{align}
Z_I([0,\widetilde{T}_{\text{lwp}}]) \lesssim \|Iu_0\|_{H^2_x}. \label{control size Z_I}
\end{align}
Next, we have from a direct computation that
\[
\partial_t E(Iu(t)) = \re{\int  \overline{I \partial_t u} (\Delta^2 I u - F(I u))  dx }.
\]
The Fundamental Theorem of Calculus gives
\[
E(Iu(t))- E(Iu_0) = \int_0^t \partial_s E(Iu(s))ds = \re{\int_0^t \int \overline{I \partial_s u } (\Delta^2 I u - F(Iu)) dxds  }.
\]
As $I \partial_t u = i \Delta^2 I u -i I F(u)$, we have
\begin{align*}
E(Iu(t))-E(Iu_0) &= \re{\int_0^t \int \overline{I \partial_s}(IF(u)-F(Iu) dxds }  \\
&= \im{\int_0^t \int \overline{\Delta^2 I u - IF(u)}(IF(u)-F(Iu)   dx ds} \\
&= \im{\int_0^t \int \overline{\Delta I u} \Delta(IF(u)-F(Iu)) dxds} \\
&\mathrel{\phantom{=}} - \im{\int_0^t \int \overline{IF(u)}(IF(u)-F(Iu))  dx ds}.
\end{align*}
We next write
\begin{align*}
\Delta (IF(u)-F(Iu)) &= I(\Delta u F'(u) + |\nabla u|^2 F''(u) ) -\Delta Iu F'(Iu) - |\nabla I u|^2 F''(Iu) \\
&=\Delta I u (F'(u)-F'(Iu)) + |\nabla I u|^2(F''(u)-F''(Iu)) + \nabla I u \cdot(\nabla u - \nabla I u) F''(u) \\
& \mathrel{\phantom{=}} + I(\Delta F'(u))- (\Delta I u) F'(u) + I(\nabla u \cdot \nabla u F''(u)) - (\nabla I u) \cdot \nabla u F''(u).
\end{align*}
Thus,
\begin{align}
E(Iu(t))-E(Iu_0) &= \im{\int_0^t \int \overline{\Delta I u} \Delta I u (F'(u)-F'(Iu))   dxds} \label{energy increment 1} \\
& \mathrel{\phantom{=}} + \im{\int_0^t \int \overline{\Delta I u} |\nabla I u|^2 (F''(u)-F''(Iu))   dxds} \label{energy increment 2} \\
&\mathrel{\phantom{=}} + \im{\int_0^t \int \overline{\Delta I u} \nabla I u \cdot (\nabla u -\nabla I u) F''(u)   dxds} \label{energy increment 3} \\
&\mathrel{\phantom{=}} + \im{\int_0^t \int \overline{\Delta I u} [I(\Delta u F'(u)) - (\Delta I u) F'(u)] dxds} \label{energy increment 4} \\
&\mathrel{\phantom{=}} + \im{\int_0^t \int \overline{\Delta I u} [I(\nabla u \cdot \nabla u F''(u)) - (\nabla I u) \cdot \nabla u F''(u)] dxds} \label{energy increment 5} \\
&\mathrel{\phantom{=}} -\im{\int_0^t \int \overline{IF(u)}(IF(u)-F(Iu))  dx ds}. \label{energy increment 6}
\end{align}
Let $J=[0,\widetilde{T}_{\text{lwp}}]$. By H\"older's inequality, we estimate
\begin{align}
|(\ref{energy increment 1})| &\lesssim \|\Delta I u\|^2_{L^4_t(J, L^{\frac{2d}{d-2}}_x)} \|F'(u)-F'(Iu)\|_{L^2_t(J, L^{\frac{d}{2}}_x)} \nonumber \\
& \lesssim (Z_I(J))^2 \| |u-Iu|(|u|+|Iu|)^{\frac{8}{d}-1} \|_{L^2_t(J, L^{\frac{d}{2}}_x)} \nonumber \\
& \lesssim (Z_I(J))^2 \|P_{>N} u\|_{L^{\frac{16}{d}}_t(J, L^4_x)} \|u\|^{\frac{8}{d}-1}_{L^{\frac{16}{d}}_t(J, L^4_x)}. \label{energy increment 1 sub 1}
\end{align}
By $(\ref{property 2})$, 
\begin{align}
\|P_{>N} u\|_{L^{\frac{16}{d}}_t(J, L^4_x)} \lesssim N^{-2} \|\Delta I u\|_{L^{\frac{16}{d}}_t(J, L^4_x)} \lesssim N^{-2} Z_I(J). \label{energy increment 1 sub 2}
\end{align}
Here $\Big(\frac{16}{d}, 4 \Big)$ is biharmonic admissible. Similarly, by $(\ref{property 3})$,
\begin{align}
\|u\|_{L^{\frac{16}{d}}_t(J, L^4_x)} \lesssim Z_I(J). \label{energy increment 1 sub 3}
\end{align}
Collecting $(\ref{energy increment 1 sub 1})-(\ref{energy increment 1 sub 3})$, we get
\begin{align}
|(\ref{energy increment 1})| \lesssim N^{-2} (Z_I(J))^{2+\frac{8}{d}}. \label{energy increment 1 final}
\end{align}
Next, we bound
\begin{align}
|(\ref{energy increment 2})| &\lesssim \|\Delta I u\|_{L^4_t(J, L^{\frac{2d}{d-2}}_x)} \||\nabla I u|^2\|_{L^{\frac{16}{11}}_t(J, L^{\frac{4d}{4d-11}}_x)} \|F''(u)-F''(Iu)\|_{L^{16}_t(J, L^{\frac{4d}{15-2d}}_x)} \nonumber \\
& \lesssim \|\Delta I u\|_{L^4_t(J, L^{\frac{2d}{d-2}}_x)} \|\nabla I u\|^2_{L^{\frac{32}{11}}_t(J, L^{\frac{8d}{4d-11}}_x)} \|F''(u)-F''(Iu)\|_{L^{16}_t(J, L^{\frac{4d}{15-2d}}_x)} \nonumber \\
& \lesssim (Z_I(J))^3 \||u-Iu|^{\frac{8}{d}-1} \|_{L^{16}_t(J, L^{\frac{4d}{15-2d}}_x) } \nonumber \\
&\lesssim (Z_I(J))^3 \|P_{>N} u\|^{\frac{8}{d}-1}_{L^{\frac{16(8-d)}{d}}_t(J, L^{\frac{4(8-d)}{15-2d}}_x)} \nonumber\\
&\lesssim N^{-2\left(\frac{8}{d}-1\right)} (Z_I(J))^{2+\frac{8}{d}}. \label{energy increment 2 final}
\end{align}
The third line follows by dropping the $I$-operator and applying $(\ref{property 3})$ with the fact $\gamma>1$. We also use the fact 
\[
|F''(z)-F''(\zeta)| \lesssim |z-\zeta|^{\frac{8}{d}-1}, \quad \forall z, \zeta \in \C,
\] 
for $5\leq d \leq 7$. The last estimate uses $(\ref{energy increment 1 sub 2})$. Note that $\Big(\frac{32}{11}, \frac{8d}{4d-11}\Big)$ and $\Big(\frac{16(8-d)}{d}, \frac{4(8-d)}{15-2d}\Big)$ are biharmonic admissible. Similarly, we estimate
\begin{align*}
|(\ref{energy increment 3})| &\lesssim \|\Delta I u\|_{L^4_t(J, L^{\frac{2d}{d-2}}_x)} \|\nabla I u\|_{L^{\frac{32}{11}}_t(J, L^{\frac{8d}{4d-11}}_x)} \|\nabla u- \nabla Iu\|_{L^{\frac{32}{11}}_t(J, L^{\frac{8d}{4d-11}}_x)}  \|F''(u)\|_{L^{16}_t(J, L^{\frac{4d}{15-2d}}_x)} \nonumber \\
&\lesssim (Z_I(J))^2 \|\nabla P_{>N} u\|_{L^{\frac{32}{11}}_t(J, L^{\frac{8d}{4d-11}}_x)} \|F''(u)\|_{L^{16}_t(J, L^{\frac{4d}{15-2d}}_x)}.
\end{align*}
Using $(\ref{property 2})$, we have
\[
\|\nabla P_{>N} u\|_{L^{\frac{32}{11}}_t(J, L^{\frac{8d}{4d-11}}_x)} \lesssim N^{-1} \|\Delta I u\|_{L^{\frac{32}{11}}_t(J, L^{\frac{8d}{4d-11}}_x)} \lesssim N^{-1} Z_I(J).
\]
As $F''(u)=O(|u|^{\frac{8}{d}-1})$, the estimate $(\ref{property 3})$ gives
\begin{align}
\|F''(u)\|_{L^{16}_t(J, L^{\frac{4d}{15-2d}}_x)} \lesssim \|u\|^{\frac{8}{d}-1}_{L^{\frac{16(8-d)}{d}}_t(J, L^{\frac{4(8-d)}{15-2d}}_x)} \lesssim (Z_I(J))^{\frac{8}{d}-1}. \label{estimate second derivative}
\end{align}
We thus obtain 
\begin{align}
|(\ref{energy increment 3})| \lesssim N^{-1} (Z_I(J))^{2+\frac{8}{d}}. \label{energy increment 3 final}
\end{align}
By H\"older's inequality,
\begin{align}
|(\ref{energy increment 4})| \lesssim \|\Delta I u\|_{L^2_t(J, L^{\frac{2d}{d-4}}_x)} \|I(\Delta u F'(u))-(\Delta I u) F'(u)\|_{L^2_t(J, L^{\frac{2d}{d+4}}_x)}. \label{energy increment 4 sub 1}
\end{align}
We then apply Lemma $\ref{lem rougher estimate}$ with $q=\frac{2d}{d+4}, q_1 = \frac{2d(d-3)}{d^2-7d+16}$ and $q_2=\frac{d(d-3)}{2(2d-7)}$ to get
\[
\|I(\Delta u F'(u))-(\Delta I u) F'(u)\|_{L^{\frac{2d}{d+4}}_x} \lesssim N^{-\alpha} \|\Delta I u\|_{L^{\frac{2d(d-3)}{d^2-7d+16}}_x} \|\scal{\nabla}^\alpha F'(u)\|_{L^{\frac{d(d-3)}{2(2d-7)}}_x},
\]
where $\alpha=2-\gamma+\delta$. The H\"older inequality then implies
\begin{multline}
\|I(\Delta u F'(u))-(\Delta I u) F'(u)\|_{L^2_t(J, L^{\frac{2d}{d+4}}_x)} \lesssim N^{-\alpha} \|\Delta I u\|_{L^{\frac{2(d-3)}{d-4}}_t(J, L^{\frac{2d(d-3)}{d^2-7d+16}}_x)} \\
\times \|\scal{\nabla}^\alpha F'(u)\|_{L^{2(d-3)}_t(J, L^{\frac{d(d-3)}{2(2d-7)}}_x)}. \label{energy increment 4 sub 2}
\end{multline}
We have
\begin{align}
\|\scal{\nabla}^\alpha F'(u) \|_{L^{2(d-3)}_t(J, L^{\frac{d(d-3)}{2(2d-7)}}_x)} \lesssim \|F'(u)\|_{L^{2(d-3)}_t(J, L^{\frac{d(d-3)}{2(2d-7)}}_x)} + \||\nabla|^\gamma F'(u)\|_{L^{2(d-3)}_t(J, L^{\frac{d(d-3)}{2(2d-7)}}_x)}. \label{first derivative}
\end{align}
As $F'(u)=O(|u|^{\frac{8}{d}})$, the estimate $(\ref{property 3})$ implies
\begin{align}
\|F'(u)\|_{L^{2(d-3)}_t(J, L^{\frac{d(d-3)}{2(2d-7)}}_x)} \lesssim \|u\|^{\frac{8}{d}}_{L^{\frac{16(d-3)}{d}}_t(J, L^{\frac{4(d-3)}{2d-7}}_x)} \lesssim (Z_I(J))^{\frac{8}{d}}. \label{first derivative term 1}
\end{align}
Here $\Big(\frac{16(d-3)}{d}, \frac{4(d-3)}{2d-7}\Big)$ is biharmonic admissible. In order to treat the second term in $(\ref{first derivative})$, we apply Lemma $\ref{lem fractional chain}$ with $q=\frac{d(d-3)}{2(2d-7)}, q_1 = \frac{2d(d-3)}{-d^2+11d-26}$ and $q_2=\frac{2d(d-3)}{d^2-3d-2}$ to get
\begin{align}
\||\nabla|^\alpha F'(u)\|_{L^{\frac{d(d-3)}{2(2d-7)}}_x} \lesssim \|F''(u)\|_{L^{\frac{2d(d-3)}{-d^2+11d-26}}_x} \||\nabla|^\alpha u\|_{L^{\frac{2d(d-3)}{d^2-3d-2}}_x}. \label{first derivative term 2}
\end{align}
H\"older's inequality then gives
\[
\||\nabla|^\alpha F'(u)\|_{L^{2(d-3)}_t(J,L^{\frac{d(d-3)}{2(2d-7)}}_x)} \lesssim \|F''(u)\|_{L^{4(d-3)}_t(J, L^{\frac{2d(d-3)}{-d^2+11d-26}}_x)} \||\nabla|^\alpha u\|_{L^{4(d-3)}_t(J, L^{\frac{2d(d-3)}{d^2-3d-2}}_x)}.
\]
As $F''(u)=O(|u|^{\frac{8}{d}-1})$, we have
\begin{align}
\|F''(u)\|_{L^{4(d-3)}_t(J, L^{\frac{2d(d-3)}{-d^2+11d-26}}_x)} \lesssim \|u\|^{\frac{8}{d}-1}_{L^{\frac{4(8-d)(d-3)}{d}}_t(J, L^{\frac{2(8-d)(d-3)}{-d^2+11d-26}}_x)} \lesssim (Z_I(J))^{\frac{8}{d}-1}. \label{first derivative term 3}
\end{align}
Here $\Big( \frac{4(8-d)(d-3)}{d}, \frac{2(8-d)(d-3)}{-d^2+11d-26} \Big)$ is biharmonic admissible. Since $\Big(4(d-3), \frac{2d(d-3)}{d^2-3d-2} \Big)$ is also a biharmonic admissible, we have from $(\ref{property 3})$ that
\begin{align}
\||\nabla|^\alpha u\|_{L^{4(d-3)}_t(J, L^{\frac{2d(d-3)}{d^2-3d-2}}_x)} \lesssim Z_I(J). \label{first derivative term 4}
\end{align}
Note that $\alpha<1<\gamma$. Collecting $(\ref{first derivative}) - (\ref{first derivative term 4})$, we show
\begin{align}
\|\scal{\nabla}^\alpha F'(u) \|_{L^{2(d-3)}_t(J, L^{\frac{d(d-3)}{2(2d-7)}}_x)} \lesssim (Z_I(J))^{\frac{8}{d}}. \label{energy increment 4 sub 3}
\end{align}
Combining $(\ref{energy increment 4 sub 1}), (\ref{energy increment 4 sub 2})$ and $(\ref{energy increment 4 sub 3})$, we get
\begin{align}
|(\ref{energy increment 4})| \lesssim N^{-(2-\gamma+\delta)} (Z_I(J))^{2+\frac{8}{d}}. \label{energy increment 4 final}
\end{align}
Similarly, we bound
\begin{align}
|(\ref{energy increment 5})| &\lesssim \|\Delta I u\|_{L^4_t(J, L^{\frac{2d}{d-2}}_x)} \|I(\nabla u \cdot \nabla u F''(u))-(I\nabla u)\cdot \nabla u F''(u)\|_{L^{\frac{4}{3}}_t(J, L^{\frac{2d}{d+2}}_x)}. \label{energy increment 5 sub 1}
\end{align}
Applying Lemma $\ref{lem rougher estimate}$ with $q=\frac{2d}{d+2}, q_1 = \frac{8d}{4d-11}$ and $q_2=\frac{8d}{19}$ and using H\"older inequality, we have
\begin{multline}
\|I(\nabla u \cdot \nabla u F''(u))-(I\nabla u)\cdot \nabla u F''(u)\|_{L^{\frac{4}{3}}_t(J, L^{\frac{2d}{d+2}}_x)} \lesssim N^{-\alpha} \|I\nabla u\|_{L^{\frac{32}{11}}_t(J, L^{\frac{8d}{4d-11}}_x)} \\
\times \|\scal{\nabla}^\alpha (\nabla u F''(u)) \|_{L^{\frac{8}{5}}_t(J, L^{\frac{8d}{19}}_x)}. \label{energy increment 5 sub 2}
\end{multline}
The fractional chain rule implies
\begin{multline}
\|\scal{\nabla}^\alpha (\nabla u F''(u)) \|_{L^{\frac{8}{5}}_t(J, L^{\frac{8d}{19}}_x)} \lesssim \|\scal{\nabla}^{\alpha+1} u\|_{L^{\frac{32}{11}}_t(J, L^{\frac{8d}{4d-11}}_x)} \|F''(u)\|_{L^{16}_t(J, L^{\frac{4d}{15-2d}}_x)} \\
+ \|\nabla u\|_{L^{\frac{32}{11}}_t(J, L^{\frac{8d}{4d-11}}_x)} \|\scal{\nabla}^\alpha F''(u)\|_{L^{16}_t(J, L^{\frac{4d}{15-2d}}_x)}. \label{energy increment 5 sub 3}
\end{multline}
By our assumptions on $\gamma$ and $\delta$, we see that $\alpha+1<\gamma$. Thus, using $(\ref{property 3})$ (and dropping the $I$-operator if necessary) and $(\ref{estimate second derivative})$, we have
\begin{align}
\|I\nabla u\|_{L^{\frac{32}{11}}_t(J, L^{\frac{8d}{4d-11}}_x)}, \|\nabla u\|_{L^{\frac{32}{11}}_t(J, L^{\frac{8d}{4d-11}}_x)}, \|\scal{\nabla}^{\alpha+1} u\|_{L^{\frac{32}{11}}_t(J, L^{\frac{8d}{4d-11}}_x)} &\lesssim Z_I(J), \label{energy increment 5 sub 4} \\
\|F''(u)\|_{L^{16}_t(J, L^{\frac{4d}{15-2d}}_x)} &\lesssim (Z_I(J))^{\frac{8}{d}-1}. \label{energy increment 5 sub 5}
\end{align}
Here $\left(\frac{32}{11}, \frac{8d}{4d-11}\right)$ is biharmonic admissible. It remains to bound $\|\scal{\nabla}^\alpha F''(u)\|_{L^{16}_t(J, L^{\frac{4d}{15-2d}}_x)}$. To do so, we use
\begin{align}
\|\scal{\nabla}^\alpha F''(u)\|_{L^{16}_t(J, L^{\frac{4d}{15-2d}}_x)} \lesssim \|F''(u)\|_{L^{16}_t(J, L^{\frac{4d}{15-2d}}_x)} + \||\nabla|^\alpha F''(u)\|_{L^{16}_t(J, L^{\frac{4d}{15-2d}}_x)}. \label{energy increment 5 sub 6}
\end{align}
We next use Lemma $\ref{lem fractional chain rule holder}$ with $\beta=\frac{8}{d}-1$, $\alpha=2-\gamma+\delta$, $q=\frac{4d}{15-2d}$ and $q_1, q_2$ satisfying
\[
\Big(\frac{8}{d}-1 -\frac{\alpha}{\rho}\Big) q_1 = \frac{\alpha}{\rho}q_2=\frac{4(8-d)}{15-2d},
\]
and $\frac{\alpha}{\frac{8}{d}-1}<\rho<1$. Note that the choice of $\rho$ is possible since $\alpha <\frac{8}{d}-1$ by our assumptions. With these choices, we have
\[
\Big(1-\frac{\alpha}{\beta \rho}\Big) q_1 = \frac{4d}{15-2d}>1,
\]
for $5\leq d\leq 7$. Then,
\begin{align*}
\||\nabla|^\alpha F''(u)\|_{L^{\frac{4d}{15-2d}}_x} \lesssim \||u|^{\frac{8}{d}-1-\frac{\alpha}{\rho}} \|_{L^{q_1}_x} \||\nabla|^\rho u\|^{\frac{\alpha}{\rho}}_{L^{\frac{\alpha}{\rho} q_2}_x} \lesssim \|u\|^{\frac{8}{d}-1-\frac{\alpha}{\rho}}_{L^{\left(\frac{8}{d}-1-\frac{\alpha}{\rho}\right)q_1}_x} \||\nabla|^\rho u\|^{\frac{\alpha}{\rho}}_{L^{\frac{\alpha}{\rho} q_2}_x}.
\end{align*}
By H\"older's inequality,
\begin{align*}
\||\nabla|^\alpha F''(u)\|_{L^{16}_t(J,L^{\frac{4d}{15-2d}}_x)} &\lesssim  \|u\|^{\frac{8}{d}-1-\frac{\alpha}{\rho}}_{L^{\left(\frac{8}{d}-1-\frac{\alpha}{\rho}\right)p_1}_t(J, L^{\left(\frac{8}{d}-1-\frac{\alpha}{\rho}\right)q_1}_x)} \||\nabla|^\rho u\|^{\frac{\alpha}{\rho}}_{L^{\frac{\alpha}{\rho} p_2}_t(J,L^{\frac{\alpha}{\rho} q_2}_x)} \\
&=\|u\|^{\frac{8}{d}-1-\frac{\alpha}{\rho}}_{L^{\frac{16(8-d)}{d}}_t(J,L^{\frac{4(8-d)}{15-2d}}_x)} \||\nabla|^\rho u\|^{\frac{\alpha}{\rho}}_{L^{\frac{16(8-d)}{d}}_t(J,L^{\frac{4(8-d)}{15-2d}}_x)},
\end{align*}
provided
\[
\Big(\frac{8}{d}-1 -\frac{\alpha}{\rho}\Big) p_1 = \frac{\alpha}{\rho}p_2=\frac{16(8-d)}{d}.
\]
Since $\left(\frac{16(8-d)}{d}, \frac{4(8-d)}{15-2d}\right)$ is biharmonic admissible, we have from $(\ref{property 3})$ with the fact $0<\rho<1<\gamma$ that
\begin{align}
\||\nabla|^\alpha F''(u)\|_{L^{16}_t(J,L^{\frac{4d}{15-2d}}_x)} \lesssim (Z_I(J))^{\frac{8}{d}-1}. \label{energy increment 5 sub 7}
\end{align}
Collecting $(\ref{energy increment 5 sub 1}) - (\ref{energy increment 5 sub 7})$, we get
\begin{align}
|(\ref{energy increment 5})| \lesssim N^{-(2-\gamma+\delta)} (Z_I(J))^{2+\frac{8}{d}}. \label{energy increment 5 final}
\end{align}
Finally, we consider $(\ref{energy increment 6})$. We bound
\begin{align}
|(\ref{energy increment 6})| &\lesssim \||\nabla|^{-1} IF(u)\|_{L^2_t(J, L^{\frac{2d}{d-2}}_x)} \|\nabla(IF(u)-F(Iu))\|_{L^2_t(J, L^{\frac{2d}{d+2}}_x)} \nonumber \\
&\lesssim \|\nabla I F(u)\|_{L^2_t(J, L^{\frac{2d}{d+2}}_x)} \|\nabla(IF(u)-F(Iu))\|_{L^2_t(J, L^{\frac{2d}{d+2}}_x)}.  \label{energy increment 6 sub 1}
\end{align}
By $(\ref{commutator estimate 4})$, 
\[
\|\nabla I F(u)\|_{L^2_t(J, L^{\frac{2d}{d+2}}_x)} \lesssim (Z_I(J))^{1+\frac{8}{d}}.
\]
By the triangle inequality, we estimate
\begin{multline*}
\|\nabla(IF(u)-F(Iu))\|_{L^2_t(J, L^{\frac{2d}{d+2}}_x)} \lesssim \|(\nabla I u) (F'(u)-F'(Iu))\|_{L^2_t(J, L^{\frac{2d}{d+2}}_x)} \\
 + \|\nabla I F(u)-(\nabla I u) F'(u)\|_{L^2_t(J, L^{\frac{2d}{d+2}}_x)}.
\end{multline*}
We firstly use H\"older's inequality and  estimate as in $(\ref{energy increment 1 sub 1})$ to get
\begin{align}
\|(\nabla I u)(F'(u)-F'(Iu))\|_{L^2_t(J, L^{\frac{2d}{d+2}}_x)} &\lesssim \|\nabla I u\|_{L^\infty_t(J, L^{\frac{2d}{d-2}}_x)} \|F'(u)-F'(Iu)\|_{L^2_t(J, L^{\frac{d}{2}}_x)} \nonumber \\
&\lesssim \|\Delta I u\|_{L^\infty_t(J, L^2_x)} \|P_{>N} u\|_{L^{\frac{16}{d}}_t(J, L^4_x)} \|u\|^{\frac{8}{d}-1}_{L^{\frac{16}{d}}_t(J, L^4_x)} \nonumber \\
&\lesssim N^{-2} (Z_I(J))^{1+\frac{8}{d}}. \label{energy increment 6 sub 2}
\end{align}
By $(\ref{commutator estimate 2})$, 
\begin{align}
\|\nabla I F(u)-(\nabla I u) F'(u)\|_{L^2_t(J, L^{\frac{2d}{d+2}}_x)} \lesssim N^{-(2-\gamma+\delta)} (Z_I(J))^{1+\frac{8}{d}}. \label{energy increment 6 sub 3}
\end{align}
Combining $(\ref{energy increment 6 sub 1})-(\ref{energy increment 6 sub 3})$, we get
\begin{align}
|(\ref{energy increment 6})| &\lesssim (Z_I(J))^{1+\frac{8}{d}}(N^{-2} (Z_I(J))^{1+\frac{8}{d}} + N^{-(2-\gamma+\delta)} (Z_I(J))^{1+\frac{8}{d}}) \nonumber \\
&\lesssim N^{-(2-\gamma+\delta)}(Z_I(J))^{2+\frac{16}{d}}. \label{energy increment 6 final}
\end{align}
Combining $(\ref{energy increment 1 final}), (\ref{energy increment 2 final}), (\ref{energy increment 3 final}), (\ref{energy increment 4 final}), (\ref{energy increment 5 final}),  (\ref{energy increment 6 final})$ and using $(\ref{control size Z_I})$, we prove $(\ref{local energy increment})$. The proof is complete.
\end{proof}
We next introduce some notations. We define
\begin{align}
\Lambda(t):=\sup_{0\leq s\leq t}\|u(s)\|_{H^\gamma_x}, \quad \Sigma(t):=\sup_{0\leq s\leq t} \|I_{N}u(s)\|_{H^2_x}. \label{energy increment notations}
\end{align}
\begin{prop}[Increment of the modified energy] \label{prop increment modified energy}
Let $5\leq d \leq 7$ and $\frac{56-3d+\sqrt{137d^2+1712d+3136}}{2(2d+32)}<\gamma<2$.
Let $u_0\in H^\gamma(\R^d)$ be such that the corresponding solution $u$ to \emph{(NL4S)} blows up at time $0<T^* <\infty$. Let $0<T<T^*$. Then for 
\begin{align}
N(T)\sim \Lambda(T)^{\frac{a(\gamma)}{2(2-\gamma)}}, \label{define N_T}
\end{align}
we have 
\[
|E(I_{N(T)} u(T))| \lesssim \Lambda(T)^{a(\gamma)}.
\]
Here the implicit constants depend only on $\gamma, T^*$ and $\|u_0\|_{H^\gamma_x}$, and $0<a(\gamma)<2$ is given by
\begin{align}
a(\gamma):=\frac{2\left(2+\frac{16}{d}+\frac{4}{\gamma}\right)(2-\gamma)}{\left[\frac{8}{d}-1-(2-\gamma)\left(\frac{16}{d}+\frac{4}{\gamma}\right)\right]-}. \label{define a gamma}
\end{align}
\end{prop}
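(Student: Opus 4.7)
The plan is to iterate the one-step modified energy increment of Lemma~\ref{lem local increment} over a partition of $[0,T]$ whose pieces have modified local well-posedness length, then combine the accumulated increment with an estimate of $|E(Iu_0)|$ to get a simultaneous bound on $|E(Iu(T))|$; the parameter $N = N(T)$ is tuned so that the two contributions balance and give the claimed exponent $a(\gamma)$.

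Concretely, I partition $[0, T]$ into consecutive intervals $J_k = [t_k, t_{k+1}]$ of length $\widetilde{T}_{\text{lwp},k} = c\|Iu(t_k)\|_{H^2_x}^{-4/\gamma}$ as supplied by Proposition~\ref{prop modified local well-posedness}. Since $\|Iu(t_k)\|_{H^2_x} \leq \Sigma(T)$, the number of sub-intervals satisfies $K \lesssim T\,\Sigma(T)^{4/\gamma}$. On each $J_k$, Lemma~\ref{lem local increment} with $\delta$ chosen close to its admissible upper bound $\gamma + \frac{8}{d} - 3$ yields
\[
|E(Iu(t_{k+1})) - E(Iu(t_k))| \lesssim N^{-(2-\gamma+\delta)}\,\Sigma(T)^{2+\frac{16}{d}},
\]
using that $\Sigma(T) \gtrsim 1$ in the blowup regime. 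Summing over $k$ and applying the bound $\Sigma(T) \lesssim N^{2-\gamma}\Lambda(T)$ from $(\ref{property 4})$ produces
\[
|E(Iu(T)) - E(Iu_0)| \lesssim T\,N^{(2-\gamma)(A-1)-\delta}\,\Lambda(T)^A, \qquad A := 2+\tfrac{16}{d}+\tfrac{4}{\gamma}.
\]
Separately, $(\ref{property 5})$ combined with the sharp Gagliardo-Nirenberg inequality $(\ref{sharp gargliardo nirenberg inequality})$ and the conserved mass gives $|E(Iu_0)| \lesssim N^{2(2-\gamma)}$ with implicit constant depending on $\|u_0\|_{H^\gamma_x}$. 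Setting $N(T) \sim \Lambda(T)^{a(\gamma)/(2(2-\gamma))}$ makes the first summand equal to $\Lambda(T)^{a(\gamma)}$ by construction, and demanding that the second summand not exceed it yields a single algebraic equation for $a(\gamma)$.

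The main work, and the principal technical point, lies in verifying that this optimization reproduces precisely the formula $(\ref{define a gamma})$. A direct computation shows that taking $\delta \nearrow \gamma + \frac{8}{d} - 3$ collapses the resulting denominator into $(\frac{8}{d}-1) - (2-\gamma)(\frac{16}{d} + \frac{4}{\gamma})$, which is exactly the denominator of $a(\gamma)$ (the ``$-$'' symbol in the statement encodes that $\delta$ remains strictly below the endpoint). Pushing $\delta$ this close to the endpoint is admissible only because of the hypothesis $\gamma > \max\{3-\frac{8}{d}, \frac{8}{d}\}$ in Lemma~\ref{lem local increment}, automatic from the lower bound on $\gamma$ in the present proposition; any smaller choice of $\delta$ would give a strictly worse denominator and spoil the balance. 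The other role of that lower bound is to force $a(\gamma) < 2$: solving the quadratic obtained by setting $a = 2$ in the formula returns exactly $\gamma = (56 - 3d + \sqrt{137d^2 + 1712d + 3136})/(2(2d+32))$. Since $\Lambda(T) \to \infty$ as $T \to T^*$ by the blowup criterion (Corollary~\ref{coro blowup criterion}), the choice $N(T) \to \infty$ as well, making every invocation of Proposition~\ref{prop modified local well-posedness} and Lemma~\ref{lem local increment} legitimate for $T$ sufficiently close to $T^*$ and closing the argument.
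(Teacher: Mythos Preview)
Your proposal is correct and follows essentially the same route as the paper: partition $[0,T]$ into $O(T\Sigma(T)^{4/\gamma})$ modified local-well-posedness intervals, sum the increment from Lemma~\ref{lem local increment}, bound $|E(Iu_0)|$ via $(\ref{property 5})$ and Gagliardo--Nirenberg, and balance the two contributions by choosing $N(T)$ as a power of $\Lambda(T)$, sending $\delta \nearrow \gamma+\tfrac{8}{d}-3$ to obtain the stated $a(\gamma)$. The only cosmetic difference is that the paper uses uniform interval length $\tau = c\,\Sigma(T)^{-4/\gamma}$ rather than your variable $\widetilde{T}_{\text{lwp},k}$, but this yields the same interval count and the same final estimate.
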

\begin{proof}
Let $\tau:=c\Sigma(T)^{-\frac{4}{\gamma}}$ for some constant $c=c(d,\gamma)>0$ small enough. For $N(T)$ sufficiently large, Proposition $\ref{prop modified local well-posedness}$ shows the local existence and the unique solution satisfies
\[
Z_{I_{N(T)}}([t,t+\tau]) \lesssim \|I_{N(T)} u(t)\|_{H^2_x} \lesssim \Sigma(T),
\]
uniformly in $t$ provided that $[t,t+\tau] \subset [0,T]$. We next split $[0,T]$ into $O(T/\tau)$ subintervals and apply Lemma $\ref{lem local increment}$ on each of these intervals to have
\begin{align}
\sup_{t\in[0,T]} |E(I_{N(T)}u(t))| &\lesssim |E(I_{N(T)} u_0)| + \frac{T}{\tau} N(T)^{-(2-\gamma+\delta)} \Big(\Sigma(T)^{2+\frac{8}{d}} + \Sigma(T)^{2+\frac{16}{d}}\Big) \\
&\lesssim |E(I_{N(T)} u_0)| + N(T)^{-(2-\gamma+\delta)} \Big(\Sigma(T)^{2+\frac{8}{d}+\frac{4}{\gamma}} + \Sigma(T)^{2+\frac{16}{d}+\frac{4}{\gamma}}\Big), \label{increment estimate 1}
\end{align}
for $\max\left\{3-\frac{8}{d},\frac{8}{d}\right\}<\gamma<2$ and $0<\delta<\gamma+\frac{8}{d}-3$. Next, by $(\ref{property 4})$, we have
\begin{align}
\Sigma(T) \lesssim N(T)^{2-\gamma} \Lambda(T). \label{increment estimate 2}
\end{align}
Moreover, the Gagliardo-Nirenberg inequality $(\ref{sharp gargliardo nirenberg inequality})$ together with $(\ref{property 5})$ imply
\begin{align}
|E(I_{N(T)} u_0)| &\lesssim \|\Delta I_{N(T)} u_0\|^2_{L^2_x} + \|I_{N(T)} u_0\|^{2+\frac{8}{d}}_{L^{2+\frac{8}{d}}_x} \nonumber \\
& \lesssim \|\Delta I_{N(T)} u_0\|^2_{L^2_x} + \|I_{N(T)} u_0\|^{\frac{8}{d}}_{L^2_x} \|\Delta I_{N(T)} u_0\|^2_{L^2_x} \nonumber \\
&\lesssim N(T)^{2(2-\gamma)} \Big(\|u_0\|_{H^\gamma_x}^2 +\|u_0\|_{H^\gamma_x}^{2+\frac{8}{d}}\Big) \nonumber \\
&\lesssim N^{2(2-\gamma)}. \label{increment estimate 3}
\end{align}
Substituting $(\ref{increment estimate 2})$ and $(\ref{increment estimate 3})$ to $(\ref{increment estimate 1})$, we get
\begin{multline}
\sup_{t\in[0,T]} |E(I_{N(T)}u(t))| \lesssim N(T)^{2(2-\gamma)} + N(T)^{-(2-\gamma+\delta) +(2-\gamma)\left(2+\frac{8}{d}+\frac{4}{\gamma}\right) } \Lambda(T)^{2+\frac{8}{d}+\frac{4}{\gamma}} \\
+ N(T)^{-(2-\gamma+\delta) +(2-\gamma)\left(2+\frac{16}{d}+\frac{4}{\gamma}\right) } \Lambda(T)^{2+\frac{16}{d}+\frac{4}{\gamma}}. \label{increment estimate 4}
\end{multline}
Optimizing $(\ref{increment estimate 4})$, we observe that if we take
\[
N(T)^{2(2-\gamma)} \sim N(T)^{-(2-\gamma+\delta) +(2-\gamma)\left(2+\frac{16}{d}+\frac{4}{\gamma}\right)} \Lambda(T)^{2+\frac{16}{d}+\frac{4}{\gamma}},
\]
or
\[
N(T)\sim \Lambda(T)^{\frac{2+\frac{16}{d}+\frac{4}{\gamma}}{(2-\gamma+\delta)-(2-\gamma)\left(\frac{16}{d}+\frac{4}{\gamma}\right)}},
\]
then
\[
\sup_{t\in[0,T]}|E(I_{N(T)} u(t))| \lesssim N(T)^{2(2-\gamma)} \sim \Lambda(T)^{\frac{2\left(2+\frac{16}{d}+\frac{4}{\gamma}\right)(2-\gamma)}{(2-\gamma+\delta)-(2-\gamma)\left(\frac{16}{d}+\frac{4}{\gamma}\right)}}.
\]
Denote
\[
a(\gamma):=\frac{2\left(2+\frac{16}{d}+\frac{4}{\gamma}\right)(2-\gamma)}{(2-\gamma+\delta)-(2-\gamma)\left(\frac{16}{d}+\frac{4}{\gamma}\right)}.
\]
Since $2-\gamma+\delta <\frac{8}{d}-1$, we see that
\[
a(\gamma)=\frac{2\left(2+\frac{16}{d}+\frac{4}{\gamma}\right)(2-\gamma)}{\left[\frac{8}{d}-1-(2-\gamma)\left(\frac{16}{d}+\frac{4}{\gamma}\right)\right]-}.
\]
In order to make $0<a(\gamma)<2$, we need
\begin{align}
\left\{
\begin{array}{rcl}
\frac{8}{d}-1- (2-\gamma)\Big(\frac{16}{d}+\frac{4}{\gamma}\Big)&> &0, \\
\Big(2+\frac{16}{d}+\frac{4}{\gamma}\Big)(2-\gamma)&<&\frac{8}{d}-1- (2-\gamma)\Big(\frac{16}{d}+\frac{4}{\gamma}\Big). 
\end{array}
\right. \label{condition gamma}
\end{align}
Solving $(\ref{condition gamma})$, we obtain
\[
\gamma>\frac{56-3d+\sqrt{137d^2+1712d+3136}}{2(2d+32)}.
\]
This completes the proof.
\end{proof}
\begin{prop}[Almost conservation law] \label{prop almost conservation law}
Let $5\leq d\leq 7, \max\{3-\frac{8}{d}, \frac{8}{d}\}<\gamma<2$ and $0<\delta<\gamma+\frac{8}{d}-3$. Let $u_0\in H^\gamma(\R^d)$ satisfying $\|u_0\|_{L^2_x} <\|Q\|_{L^2_x}$, where $Q$ is the solution to the ground state equation $(\ref{ground state equation})$. Assume in addition that $E(I u_0)\leq 1$. Let 
\[
\widetilde{T}_{\emph{lwp}}:=c \|Iu_0\|^{-\frac{4}{\gamma}}_{H^2_x},
\] 
for some small constant $c=c(d,\gamma)>0$. Then, for $N$ sufficiently large,
\[
\sup_{t\in[0,\widetilde{T}_{\emph{lwp}}]} |E(Iu(t)) - E(Iu_0)| \lesssim N^{-(2-\gamma+\delta)}.
\]
Here the implicit constant depends only on $\gamma$ and $E(Iu_0)$.
\end{prop}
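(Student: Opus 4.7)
The plan is to reduce the proposition to Lemma \ref{lem local increment} by bounding $\|Iu_0\|_{H^2_x}$ uniformly, using the sharp Gagliardo-Nirenberg inequality $(\ref{sharp gargliardo nirenberg inequality})$ together with the sub-critical mass hypothesis $\|u_0\|_{L^2_x}<\|Q\|_{L^2_x}$. This is the same mechanism that makes the $H^2$-theory global below the ground state, transported to the $I$-regularized level.

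First I would observe that $\|Iu_0\|_{L^2_x}\leq \|u_0\|_{L^2_x}$, since the multiplier satisfies $0<m_N\leq 1$. Applying $(\ref{sharp gargliardo nirenberg inequality})$ to $Iu_0$ with the explicit constant $C(d)=(1+4/d)/\|Q\|^{8/d}_{L^2_x}$ gives
\[
\frac{d}{2d+8}\|Iu_0\|^{2+8/d}_{L^{2+8/d}_x} \leq \frac{1}{2}\Bigl(\frac{\|u_0\|_{L^2_x}}{\|Q\|_{L^2_x}}\Bigr)^{8/d}\|\Delta Iu_0\|^2_{L^2_x},
\]
and hence, from the definition of $E$,
\[
E(Iu_0)\geq \frac{1}{2}\Bigl(1-\Bigl(\tfrac{\|u_0\|_{L^2_x}}{\|Q\|_{L^2_x}}\Bigr)^{8/d}\Bigr)\|\Delta Iu_0\|^2_{L^2_x}.
\]
Setting $\eta:=1-(\|u_0\|_{L^2_x}/\|Q\|_{L^2_x})^{8/d}>0$ and using the a priori bound $E(Iu_0)\leq 1$, this yields
\[
\|Iu_0\|^2_{H^2_x} = \|Iu_0\|^2_{L^2_x}+\|\Delta Iu_0\|^2_{L^2_x} \leq \|u_0\|^2_{L^2_x}+2\eta^{-1} E(Iu_0) \lesssim 1,
\]
with an implicit constant depending only on $\eta$ and the assumed bound on $E(Iu_0)$.

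The hypotheses on $\gamma$ and $\delta$ coincide with those of Lemma \ref{lem local increment}, and by property $(\ref{property 4})$ one further has $\|u_0\|_{H^\gamma_x}\lesssim \|Iu_0\|_{H^2_x}\lesssim 1$. The implicit constant in Lemma \ref{lem local increment}, which depends only on $\gamma$ and $\|u_0\|_{H^\gamma_x}$, is therefore controlled solely in terms of $\gamma$ and $E(Iu_0)$, so applying $(\ref{local energy increment})$ directly gives
\[
\sup_{t\in[0,\widetilde{T}_{\emph{lwp}}]}|E(Iu(t))-E(Iu_0)| \lesssim N^{-(2-\gamma+\delta)}\Bigl(\|Iu_0\|^{2+8/d}_{H^2_x}+\|Iu_0\|^{2+16/d}_{H^2_x}\Bigr) \lesssim N^{-(2-\gamma+\delta)},
\]
which is the desired almost conservation law. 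No bootstrap or iteration is needed on the single local interval $[0,\widetilde{T}_{\emph{lwp}}]$; the only genuine step is the Gagliardo-Nirenberg reduction, and I foresee no real obstacle once Lemma \ref{lem local increment} is granted. The iteration of this estimate over many local intervals (which is what makes the global theory of Theorem \ref{theorem global existence below ground state} work) is deferred to Section \ref{section global well-posedness}.
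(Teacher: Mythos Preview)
Your proposal is correct and follows essentially the same route as the paper: reduce to Lemma~\ref{lem local increment} and then bound $\|Iu_0\|_{H^2_x}\lesssim 1$ via the sharp Gagliardo--Nirenberg inequality $(\ref{sharp gargliardo nirenberg inequality})$ combined with $\|Iu_0\|_{L^2_x}\leq\|u_0\|_{L^2_x}<\|Q\|_{L^2_x}$ and $E(Iu_0)\leq 1$. Your added remark that $\|u_0\|_{H^\gamma_x}\lesssim\|Iu_0\|_{H^2_x}$ via $(\ref{property 4})$, so the implicit constant in Lemma~\ref{lem local increment} is itself controlled, is a welcome clarification the paper leaves implicit.
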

\begin{rem} \label{rem energy below ground state}
Using the sharp Gagliardo-Nirenberg inequality together with the conservation of mass, the modified energy is always positive for initial data satisfying $\|u_0\|_{L^2_x}<\|Q\|_{L^2_x}$. Indeed,
\begin{align*}
E(Iu(t)) &=\frac{1}{2}\|\Delta I u(t)\|^2_{L^2_x} -\frac{1}{2+\frac{8}{d}}\|Iu(t)\|^{2+\frac{8}{d}}_{L^{2+\frac{8}{d}}_x} \\
&\geq \frac{1}{2}\|\Delta I u(t)\|^2_{L^2_x}-\frac{1}{2} \Big(\frac{\|Iu(t)\|_{L^2_x}}{\|Q\|_{L^2_x}} \Big)^{\frac{8}{d}} \|\Delta I u(t)\|^2_{L^2_x} \\
&\geq \frac{1}{2}\|\Delta I u(t)\|^2_{L^2_x}-\frac{1}{2} \Big(\frac{\|u(t)\|_{L^2_x}}{\|Q\|_{L^2_x}} \Big)^{\frac{8}{d}} \|\Delta I u(t)\|^2_{L^2_x} \\
&\geq \frac{1}{2}\|\Delta I u(t)\|^2_{L^2_x}-\frac{1}{2} \Big(\frac{\|u_0\|_{L^2_x}}{\|Q\|_{L^2_x}} \Big)^{\frac{8}{d}} \|\Delta I u(t)\|^2_{L^2_x}\\
&>0.
\end{align*} 
Here we use the fact that $\|I u\|_{L^2_x} \leq \|u\|_{L^2_x}$ which follows from the functional calculus and that $\|I(\xi)\|_{L^\infty_\xi} \leq 1$.
\end{rem}
\noindent \textit{Proof of \emph{Proposition} $\ref{prop almost conservation law}$.} By Lemma $\ref{lem local increment}$, we have for $N$ large enough,
\[
\sup_{t\in [0,\widetilde{T}_{\text{lwp}}]} |E(I u(t))-E(Iu_0)| \lesssim N^{-(2-\gamma+\delta)} \Big( \|Iu_0\|_{H^2_x}^{2+\frac{8}{d}} + \|Iu_0\|_{H^2_x}^{2+\frac{16}{d}} \Big).
\]
We only need to control $\|Iu_0\|_{H^2_x}$. To do so, we use the sharp Gagliardo-Nirenberg inequality $(\ref{sharp gargliardo nirenberg inequality})$ and $(\ref{property 1})$ to have
\begin{align*}
\|I u_0\|^2_{H^2_x} &\sim \|\Delta I u_0\|^2_{L^2_x} + \|I u_0\|_{L^2_x}^2 = 2E(Iu_0) + \frac{1}{1+\frac{4}{d}}\|I u_0\|^{2+\frac{8}{d}}_{L^{2+\frac{8}{d}}_x} +\|Iu_0\|_{L^2_x}^2 \\
&\leq 2E(Iu_0) + \Big(\frac{\|Iu_0\|_{L^2_x} }{\|Q\|_{L^2_x}}\Big)^{\frac{8}{d}} \|\Delta I u_0\|_{L^2_x}^2 + \|Iu_0\|^2_{L^2_x} \\
&\leq 2E(Iu_0) + \Big(\frac{\|u_0\|_{L^2_x} }{\|Q\|_{L^2_x}}\Big)^{\frac{8}{d}} \|I u_0\|_{H^2_x}^2 + \|u_0\|^2_{L^2_x}.
\end{align*}
Thus
\[
\Big(1 - \Big(\frac{\|u_0\|_{L^2_x} }{\|Q\|_{L^2_x}}\Big)^{\frac{8}{d}} \Big)\|Iu_0\|_{H^2_x}^2 \leq 2E(Iu_0) + \|u_0\|_{L^2_x}^2.
\]
By our assumptions $\|u_0\|_{L^2_x}<\|Q\|_{L^2_x}$ and $E(Iu_0)\leq 1$, we obtain $\|Iu_0\|_{H^2_x} \lesssim 1$. The proof is complete.
\defendproof
\section{Limiting profile} \label{section limiting profile}
\setcounter{equation}{0}
In this section, we prove Theorem $\ref{theorem weak limiting profile}$, Theorem $\ref{theorem mass concentration}$ and Theorem $\ref{theorem strongly limiting profile}$. 
\subsection{Proof of Theorem $\ref{theorem weak limiting profile}$} As the solution blows up at time $0<T^* <\infty$, the blowup alternative allows us to choose a sequence of times $(t_n)_{n\geq 1}$  such that $t_n \rightarrow T^*$ as $n\rightarrow \infty$ and $\|u(t_n)\|_{H^\gamma_x} =\Lambda(t_n)\rightarrow \infty$ as $n\rightarrow \infty$ (see $(\ref{energy increment notations})$ for the notation). Denote
\[
\psi_n(x):= \lambda_n^{\frac{d}{2}} I_{N(t_n)} u (t_n, \lambda_n x),
\]
where $N(t_n)$ is given as in $(\ref{define N_T})$ with $T=t_n$ and the parameter $\lambda_n$ is given by
\begin{align}
\lambda_n^2:=  \frac{\|\Delta Q\|_{L^2_x}}{\|\Delta I_{N(t_n)} u(t_n)\|_{L^2_x}}. \label{define lambda_n}
\end{align}
By $(\ref{property 4})$ and the blowup criterion given in Corollary $\ref{coro blowup criterion}$, we see that
\[
\lambda^2_n \lesssim \frac{\|\Delta Q\|_{L^2_x}}{\|u(t_n)\|_{H^\gamma_x}} \lesssim (T^*-t_n)^{\frac{\gamma}{4}} \text{ or } \lambda_n \lesssim (T^*-t_n)^{\frac{\gamma}{8}}.
\]
On the other hand, $(\psi_n)_{n\geq 1}$ is bounded in $H^2(\R^d)$. Indeed, 
\begin{align}
\|\psi_n\|_{L^2_x} &= \|I_{N(t_n)} u(t_n)\|_{L^2_x} \leq \|u(t_n)\|_{L^2_x} =\|u_0\|_{L^2_x}, \nonumber \\
\|\Delta \psi_n \|_{L^2_x} &=\lambda_n^2 \|\Delta I_{N(t_n)} u(t_n)\|_{L^2_x} = \|\Delta Q\|_{L^2_x}. \label{second derivative psi_n}
\end{align}
By Proposition $\ref{prop increment modified energy}$ with $T=t_n$, we have
\[
E(\psi_n) = \lambda_n^4 E(I_{N(t_n)}u(t_n))  \lesssim \lambda_n^4 \Lambda(t_n)^{a(\gamma)} \lesssim \Lambda(t_n)^{a(\gamma)-2}.
\]
As $0<a(\gamma)<2$ for $\frac{56-3d+\sqrt{137d^2+1712d+3136}}{2(2d+32)}<\gamma<2$, we see that $E(\psi_n)\rightarrow 0$ as $n\rightarrow \infty$. Therefore, the expression of the modified energy and $(\ref{second derivative psi_n})$ give
\begin{align}
\|\psi_n\|^{2+\frac{8}{d}}_{L^{2+\frac{8}{d}}_x} \rightarrow \Big(1+\frac{4}{d}\Big) \|\Delta Q\|_{L^2_x}^2, \label{strong convergence in L_2+8/d}
\end{align}
as $n\rightarrow \infty$. Applying Theorem $\ref{theorem concentration compactness}$ to the sequence $(\psi_n)_{n\geq 1}$ with $M=\|\Delta Q\|_{L^2_x}$ and $m=\left(\left(1+\frac{4}{d}\right)\|\Delta Q\|^2_{L^2_x}\right)^{\frac{d}{2d+8}}$, there exist a sequence $(x_n)_{n\geq 1} \subset \R^d$ and a function $U \in H^2(\R^d)$ such that $\|U\|_{L^2_x}\geq \|Q\|_{L^2_x}$ and up to a subsequence,
\[
\psi_n(\cdot + x_n) \rightharpoonup U \text{ weakly in } H^2(\R^d),
\]
as $n\rightarrow \infty$. That is 
\begin{align}
\lambda_n^{\frac{d}{2}} I_{N(t_n)} u(t_n, \lambda_n \cdot + x_n) \rightharpoonup  U \text{ weakly in } H^2(\R^d), \label{weak convergence H2}
\end{align}
as $n\rightarrow \infty$. To conclude Theorem $\ref{theorem weak limiting profile}$, we need to remove $I_{N(t_n)}$ from $(\ref{weak convergence H2})$. To do so, we consider for any $0 \leq \sigma <\gamma$,
\begin{align}
\|\lambda_n^{\frac{d}{2}} (u-I_{N(t_n)} u) (t_n, \lambda_n \cdot + x_n)\|_{\dot{H}^\sigma_x} & = \lambda_n^\sigma \|P_{\geq N(t_n)} u(t_n)\|_{\dot{H}^\sigma_x} \nonumber \\
&\lesssim \lambda_n^\sigma N(t_n)^{\sigma-\gamma} \|P_{\geq N(t_n)} u(t_n)\|_{\dot{H}^\gamma_x}  \nonumber \\ 
&\lesssim \Lambda(t_n)^{-\frac{\sigma}{2}} \Lambda(t_n)^{\frac{(\sigma-\gamma)a(\gamma)}{2(2-\gamma)}} \|P_{\geq N(t_n)} u(t_n)\|_{H^\gamma_x} \nonumber \\
&\lesssim \Lambda(t_n)^{1-\frac{\sigma}{2} +\frac{(\sigma-\gamma)a(\gamma)}{2(2-\gamma)}}. \label{exponent of Lambda t_n}
\end{align}
Using the explicit expression of $a(\gamma)$ given in $(\ref{define a gamma})$, we find that for
\[
\sigma <a(d,\gamma):=\frac{4d\gamma^2+(2d+48)\gamma+16d}{16d+(56-3d)\gamma-16\gamma^2},
\]
the exponent of $\Lambda(t_n)$ in $(\ref{exponent of Lambda t_n})$ is negative. Note that an easy computation shows that the condition $a(d,\gamma)<\gamma$ requires
\[
\frac{24-3d+\sqrt{9d^2+368d+576}}{32}<\gamma<2,
\]
which is satisfied by our assumption on $\gamma$. Thus,
\begin{align}
\|\lambda_n^{\frac{d}{2}} (u-I_{N(t_n)} u) (t_n, \lambda_n \cdot + x_n)\|_{H^{a(d,\gamma)-}_x} \rightarrow 0, \label{strong convergence H a(d,gamma)}
\end{align}
as $n\rightarrow \infty$. Combining $(\ref{weak convergence H2})$ and $(\ref{strong convergence H a(d,gamma)})$, we prove
\[
\lambda_n^{\frac{d}{2}} u(t_n, \lambda_n \cdot +x_n) \rightharpoonup U \text{ weakly in } H^{a(d,\gamma)-}(\R^d),
\]
as $n\rightarrow \infty$. The proof is complete.
\defendproof
\subsection{Proof of Theorem $\ref{theorem mass concentration}$} 
By Theorem $\ref{theorem weak limiting profile}$, there exists a blowup profile $U \in H^2(\R^d)$  with $\|U\|_{L^2_x} \geq \|Q\|_{L^2_x}$ and there exist sequences $(t_n, \lambda_n, x_n)_{n\geq 1} \subset \R_+\times \R^*_+ \times \R^d$ such that $t_n \rightarrow T^*$, 
\begin{align}
\frac{\lambda_n}{(T^*-t_n)^{\frac{\gamma}{8}}} \lesssim 1, \label{limit of lambda_n}
\end{align}
for all $n\geq 1$ and $\lambda_n^{\frac{d}{2}} u(t_n, \lambda_n \cdot + x_n) \rightharpoonup U$ weakly in $H^{a(d,\gamma)-}(\R^d)$ (hence in $L^2(\R^d)$) as $n\rightarrow \infty$. Thus for any $R>0$, we have
\[
\liminf_{n\rightarrow \infty} \lambda^d_n \int_{|x|\leq R} |u(t_n, \lambda_n x +x_n)|^2 dx \geq \int_{|x|\leq R}  |U(x)|^2 dx.
\]
By change of variables, we get
\[
\liminf_{n\rightarrow \infty} \sup_{y\in \R^d} \int_{|x-y|\leq R\lambda_n} |u(t_n, x)|^2 dx \geq \int_{|x|\leq R} |U(x)|^2 dx.
\]
Using the assumption $\frac{(T^*-t_n)^{\frac{\gamma}{8}}}{\alpha(t_n)} \rightarrow 0$ as $n\rightarrow \infty$, we have from $(\ref{limit of lambda_n})$ that $\frac{\lambda_n}{\alpha(t_n)} \rightarrow 0$ as $n\rightarrow \infty$. We thus obtain for any $R>0$, 
\[
\liminf_{n\rightarrow \infty} \sup_{y\in \R^d} \int_{|x-y|\leq \alpha(t_n)} |u(t_n,x)|^2 dx \geq \int_{|x|\leq R} |U(x)|^2 dx.
\]
Let $R\rightarrow \infty$, we obtain
\[
\liminf_{n\rightarrow \infty} \sup_{y\in\R^d} \int_{|x-y|\leq \alpha(t_n)} |u(t_n,x)|^2 dx \geq \|U\|_{L^2_x}^2.
\]
This implies 
\[
\limsup_{t\nearrow T^*} \sup_{y\in \R^d} \int_{|x-y|\leq \alpha(t)} |u(t,x)|^2 dx \geq \|Q\|^2_{L^2_x}. 
\]
Sine for any fixed time $t$, the map $y\mapsto \int_{|x-y|\leq \alpha(t)} |u(t,x)|^2 dx$ is continuous and goes to zero as $|y|\rightarrow \infty$, there exists $x(t) \in \R^d$ such that
\[
\sup_{y\in \R^d}  \int_{|x-y|\leq \alpha(t)} |u(t,x)|^2 dx = \int_{|x-x(t)|\leq \alpha(t)} |u(t,x)|^2 dx.
\]
This shows
\[
\limsup_{t \nearrow T^*} \int_{|x-x(t)| \leq \alpha(t)} |u(t,x)|^2 dx \geq \|Q\|^2_{L^2_x}.
\]
The proof is complete.
\defendproof
\subsection{Proof of Theorem $\ref{theorem strongly limiting profile}$} We firstly recall the following variational characterization of the solution to the ground state equation $(\ref{ground state equation})$. Note that the uniqueness up to translations in space, phase and dilations of solution to this ground state equation is assumed here. 
\begin{lem}[Variation characterization of the ground state \cite{ZhuYangZhang10}] \label{lem variational characterization} If $v \in H^2(\R^d)$ is such that $\|v\|_{L^2_x}= \|Q\|_{L^2_x}$ and $E(u)=0$, then $v$ is of the form
\[
v(x)= e^{i\theta} \lambda^{\frac{d}{2}} Q(\lambda x+ x_0),
\] 
for some $\theta \in \R, \lambda>0$ and $x_0\in \R^d$, where $Q$ is the unique solution to the ground state equation $(\ref{ground state equation})$.
\end{lem}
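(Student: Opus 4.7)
The plan is to show that $v$ saturates the sharp Gagliardo-Nirenberg inequality \eqref{sharp gargliardo nirenberg inequality} and then invoke the variational characterization of its extremizers, which, under the standing uniqueness assumption for solutions to the ground state equation \eqref{ground state equation}, forces $v$ to have the stated form.

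First, from the hypothesis $E(v)=0$ and the definition of the energy I read off the identity
\[
\|v\|^{2+\frac{8}{d}}_{L^{2+\frac{8}{d}}_x} = \Big(1+\frac{4}{d}\Big)\|\Delta v\|^2_{L^2_x}.
\]
Note that $\|\Delta v\|_{L^2_x}>0$, since otherwise $v$ would be polyharmonic and in $L^2$, hence identically zero, contradicting $\|v\|_{L^2_x}=\|Q\|_{L^2_x}>0$. Plugging this identity into \eqref{sharp gargliardo nirenberg inequality} together with the normalization $\|v\|_{L^2_x}=\|Q\|_{L^2_x}$, both sides reduce to $(1+4/d)\|\Delta v\|^2_{L^2_x}$, so equality holds. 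Thus $v$ is an extremizer of the sharp Gagliardo-Nirenberg inequality.

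Next, I would invoke the classical variational principle: any extremizer of \eqref{sharp gargliardo nirenberg inequality} satisfies, after rescaling by the natural $L^2$-preserving scaling $v(x)\mapsto \lambda^{d/2}v(\lambda x)$ and multiplication by a phase $e^{i\theta}$, the Euler-Lagrange equation associated with the constrained variational problem, which up to renormalization is precisely the ground state equation \eqref{ground state equation}. By the uniqueness assumption on the solution $Q$ to \eqref{ground state equation}, modulo the natural invariances (phase, translation, dilation), one recovers
\[
v(x) = e^{i\theta}\lambda^{d/2}Q(\lambda x+x_0),
\]
for some $\theta\in\R$, $\lambda>0$, $x_0\in\R^d$. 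The parameters $\lambda$ and $\theta$ are then uniquely determined by $\|\Delta v\|_{L^2_x}$ and a choice of phase (with $\|v\|_{L^2_x}=\|Q\|_{L^2_x}$ being automatic), while $x_0$ is free.

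The main obstacle is the equality-case analysis in the sharp Gagliardo-Nirenberg inequality: one must show that any extremizer is, up to symmetries, a ground state. The standard route is the profile decomposition / concentration-compactness argument developed in \cite{ZhuYangZhang10}: applying the profile decomposition to (a sequence of rescalings of) $v$, at least one profile must capture all of the mass and kinetic energy; that profile is again an extremizer and solves \eqref{ground state equation} after rescaling; uniqueness of $Q$ then collapses the profile to $Q$ itself up to symmetries. Since this variational characterization appears in \cite{ZhuYangZhang10}, the proof largely reduces to citing their characterization and verifying the two hypotheses (extremal value attained and the $L^2$-normalization), which is exactly what the computation above accomplishes.
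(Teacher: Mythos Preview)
The paper does not actually prove this lemma; it merely states it with a citation to \cite{ZhuYangZhang10} and then applies it in the proof of Theorem~\ref{theorem strongly limiting profile}. So there is no ``paper's own proof'' to compare against.

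Your argument is correct and is precisely the standard route: the hypotheses $E(v)=0$ and $\|v\|_{L^2_x}=\|Q\|_{L^2_x}$ together force equality in the sharp Gagliardo--Nirenberg inequality \eqref{sharp gargliardo nirenberg inequality}, and the equality-case characterization (established in \cite{ZhuYangZhang10} via profile decomposition, together with the standing uniqueness assumption on $Q$) then yields the stated form. Your computation verifying that both sides equal $(1+4/d)\|\Delta v\|_{L^2_x}^2$ is accurate, as is the observation that $\|\Delta v\|_{L^2_x}>0$. The only part that is not self-contained is the equality-case analysis itself, but you correctly identify this as the content supplied by \cite{ZhuYangZhang10}, which is exactly how the paper treats it as well.
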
  
Using the notation in the proof of Theorem $\ref{theorem weak limiting profile}$ and the assumption $\|u_0\|_{L^2_x} = \|Q\|_{L^2_x}$, we have
\[
\|\psi_n\|_{L^2_x} \leq \|u_0\|_{L^2_x} = \|Q\|_{L^2_x} \leq \|U\|_{L^2_x}. 
\]
Sine $\psi_n(\cdot +x_n) \rightharpoonup U$ weakly in $L^2(\R^d)$, the semi-continuity of weak convergence implies
\[
\|U\|_{L^2_x} \leq \liminf_{n\rightarrow \infty} \|\psi_n\|_{L^2_x} \leq \|Q\|_{L^2_x}.
\]
Thus, 
\begin{align}
\|U\|_{L^2_x} = \|Q\|_{L^2_x} = \lim_{n\rightarrow \infty} \|\psi_n\|_{L^2_x}. \label{L2 norm U and Q}
\end{align}
Hence up to a subsequence 
\begin{align}
\psi_n(\cdot +x_n) \rightarrow U \text{ strongly in } L^2(\R^d), \label{strongly convergence in L2}
\end{align} 
as $n\rightarrow \infty$. On the other hand, using $(\ref{second derivative psi_n})$, the Gagliardo-Nirenberg inequality $(\ref{sharp gargliardo nirenberg inequality})$ implies $\psi_n(\cdot +x_n) \rightarrow U$ strongly in $L^{2+\frac{8}{d}}(\R^d)$. Indeed, by $(\ref{second derivative psi_n})$,
\begin{align*}
\|\psi_n(\cdot+x_n) - U\|_{L^{2+\frac{8}{d}}_x}^{2+\frac{8}{d}} &\lesssim \|\psi(\cdot +x_n) -U\|^{\frac{8}{d}}_{L^2_x} \|\Delta(\psi_n(\cdot +x_n) - U\|_{L^2_x}^2 \\
&\lesssim (\|\Delta Q\|_{L^2_x} + \|\Delta U\|_{L^2_x})^2 \|\psi(\cdot +x_n) -U\|^{\frac{8}{d}}_{L^2_x} \rightarrow 0,
\end{align*}
as $n\rightarrow \infty$. Moreover, using $(\ref{strong convergence in L_2+8/d})$ and $(\ref{L2 norm U and Q})$, the sharp Gagliardo-Nirenberg inequality $(\ref{sharp gargliardo nirenberg inequality})$ also gives
\[
\|\Delta Q\|^2_{L^2_x} = \frac{1}{1+\frac{4}{d}} \|U\|^{2+\frac{8}{d}}_{L^{2+\frac{8}{d}}_x} \leq \Big(\frac{\|U\|_{L^2_x}}{\|Q\|_{L^2_x}}\Big)^{\frac{8}{d}} \|\Delta U\|_{L^2_x}^2 = \|\Delta U\|_{L^2_x}^2,
\]
or $\|\Delta Q\|_{L^2_x} \leq \|\Delta U\|_{L^2_x}$. By the semi-continuity of weak convergence and $(\ref{second derivative psi_n})$,
\[
\|\Delta U\|_{L^2_x} \leq \liminf_{n\rightarrow \infty} \|\Delta \psi_n\|_{L^2_x} = \|\Delta Q\|_{L^2_x}.
\]
Therefore,
\begin{align}
\|\Delta U\|_{L^2_x}=\|\Delta Q\|_{L^2_x} = \lim_{n\rightarrow \infty} \|\Delta \psi_n\|_{L^2_x}. \label{H dot 2 norm U and Q}
\end{align}
Combining $(\ref{L2 norm U and Q}), (\ref{H dot 2 norm U and Q})$ and using the fact $\psi_n(\cdot +x_n)\rightharpoonup U$ weakly in $H^2(\R^d)$, we conclude that $\psi_n(\cdot +x_n) \rightarrow U$ strongly in $H^2(\R^d)$. In particular,
\[
E(U) = \lim_{n\rightarrow \infty} E(\psi_n) = 0,
\]
as $n\rightarrow \infty$. This shows that there exists $U \in H^2(\R^d)$ satisfying
\[
\|U\|_{L^2_x}=\|Q\|_{L^2_x}, \quad \|\Delta U\|_{L^2_x} = \|\Delta Q\|_{L^2_x}, \quad E(U)=0.
\]
Applying the variational characterization given in Lemma $\ref{lem variational characterization}$, we have (taking $\lambda=1$),
\[
U(x)=e^{i\theta} Q(x+x_0),
\]
for some $(\theta, x_0) \in \R \times \R^d$. Hence
\[
\lambda_n^{\frac{d}{2}} I_{N(t_n)} u(t_n, \lambda_n \cdot +x_n) \rightarrow e^{i\theta} Q(\cdot + x_0) \text{ strongly in } H^2(\R^d),
\]
as $n\rightarrow \infty$. Using $(\ref{strong convergence H a(d,gamma)})$, we prove
\[
\lambda_n^{\frac{d}{2}} u(t_n, \lambda_n\cdot +x_n) \rightarrow e^{i\theta} Q(\cdot +x_0) \text{ strongly in } H^{a(d,\gamma)-}(\R^d),
\]
as $n\rightarrow \infty$. The proof is complete.
\defendproof
\section{Global well-posedness} \label{section global well-posedness}
\setcounter{equation}{0}
In this section, we will give the proof of Theorem $\ref{theorem global existence below ground state}$. By density argument, we assume that $u_0 \in C^\infty_0(\R^d)$. Let $u$ be a global solution to (NL4S) with initial data $u_0$ satisfying $\|u_0\|_{L^2_x} <\|Q\|_{L^2_x}$. In order to apply the almost conservation law given in Proposition $\ref{prop almost conservation law}$, we need the absolute value of modified energy of initial data is small. Since $E(Iu_0)$ is not necessarily small, we will use the scaling $(\ref{scaling invariance})$ to make $E(Iu_\lambda(0))$ is small. We have
\begin{align*}
E(Iu_\lambda(0)) \leq \frac{1}{2}\|\Delta I u_\lambda(0)\|^2_{L^2_x} \lesssim N^{2(2-\gamma)} \|\Delta u_\lambda(0)\|^2_{L^2_x} = N^{2(2-\gamma)} \lambda^{-2\gamma} \|u_0\|^2_{\dot{H}^2_x}. 
\end{align*}
Thus, we can make $E(Iu_\lambda(0)) \leq \frac{1}{4}$ by taking 
\begin{align}
N\sim \lambda^{\frac{2-\gamma}{\gamma}}. \label{choice of N}
\end{align} 
Moreover, since the scaling $(\ref{scaling invariance})$ preserves the $L^2$-norm, we have $\|u_\lambda(0)\|_{L^2_x} = \|u_0\|_{L^2_x}<\|Q\|_{L^2_x}$. Thus, the assumptions of Proposition $\ref{prop almost conservation law}$ are satisfied. Therefore, there exists $\tau>0$ so that for $N$ sufficiently large,
\[
E(Iu_\lambda(t)) \leq E(Iu_\lambda(0)) + C N^{-(2-\gamma+\delta)}, 
\] 
for $t \in [0,\tau]$ where $\max\{3-\frac{8}{d}, \frac{8}{d}\}<\gamma<2$ and $0<\delta<\gamma+\frac{8}{d}-3$. We may reapply this proposition continuously so that $E(Iu_\lambda(t))$ reaches 1, that is at least $C_1N^{2-\gamma+\delta}$ times. Therefore,
\begin{align}
E(I u_\lambda(C_1 \tau N^{2-\gamma+\delta})) \sim 1. \label{modified energy iteration}
\end{align}
Now, given any $T\gg 1$, we choose $N\gg 1$ so that
\[
T \sim C_1 \tau\frac{N^{2-\gamma+\delta}}{\lambda^4}.
\] 
Using $(\ref{choice of N})$, we have
\begin{align}
T\sim N^{2-\gamma+\delta-\frac{4(2-\gamma)}{\gamma}}. \label{choice of T}
\end{align}
As $0<\delta<\gamma+\frac{8}{d}-3$ or $2-\gamma+\delta <\frac{8}{d}-1$, the exponent of $N$ is positive provided that
\[
\frac{8}{d}-1 - \frac{4(2-\gamma)}{\gamma} >0 \text{ or } \gamma>\frac{8d}{3d+8}.
\] 
Thus the choice of $N$ makes sense for arbitrary $T\gg 1$. A direct computation and $(\ref{choice of N}), (\ref{modified energy iteration})$ and $(\ref{choice of T})$ show 
\begin{align*}
E(I u(T)) = \lambda^4 E(Iu_\lambda(\lambda^4T)) = \lambda^4E(I u_\lambda(C_1 \tau N^{2-\gamma+\delta}) \sim \lambda^4 \leq N^{\frac{4(2-\gamma)}{\gamma}} \sim T^{\frac{4(2-\gamma)}{(2-\gamma+\delta)\gamma-4(2-\gamma)}}.
\end{align*}
This shows that there exists $C_2=C_2(\tau, \|u_0\|_{H^\gamma_x})$ such that
\[
E(Iu(T)) \leq C_2 T^{\frac{4(2-\gamma)}{(2-\gamma+\delta)\gamma-4(2-\gamma)}},
\]
for any $T\gg 1$. Finally, by $(\ref{property 4})$,
\begin{align*}
\|u(T)\|^2_{H^\gamma_x} &\lesssim \|I u(T)\|^2_{H^2_x} \sim \|\Delta I u(T)\|^2_{L^2_x} + \|Iu(T)\|^2_{L^2_x} \lesssim E(Iu (T)) + \|u_0\|_{L^2_x}^2 \\
&\lesssim C_3 T^{\frac{4(2-\gamma)}{(2-\gamma+\delta)\gamma-4(2-\gamma)}} + C_4,
\end{align*}
where $C_3, C_4$ depends only on $\|u_0\|_{H^\gamma_x}$. The proof is complete.
\defendproof
\section*{Acknowledgments}
The author would like to express his deep gratitude to Prof. Jean-Marc BOUCLET for the kind guidance and encouragement. 


\end{document}